\newtheorem{thm}{Theorem}[section]
\newtheorem{lemma}[thm]{Lemma}
\newtheorem{defin}[thm]{Definition}
\newtheorem{rem}[thm]{Remark}
\numberwithin{equation}{section}
\renewcommand{\epsilon}{\varepsilon}
\begin{document}
	\begin{center}
		{\large Limiting analysis of a crystal dissolution and precipitation model coupled with the unsteady stokes equations in the context of porous media flow }

		\bigskip
		
		\bigskip
	\end{center}
	\begin{minipage}[h]{0.45\columnwidth}
		\begin{center}
			Nibedita Ghosh\\
			{\small Department of Mathematics, \\IIT Kharagpur}\\
			{\small WB 721302, India}\\
			{\small e-mail: nghosh.iitkgp@gmail.com}
		\end{center}
	\end{minipage}\hspace{1cm}
	\begin{minipage}[h]{0.45\columnwidth}
		\begin{center}
			Hari Shankar Mahato\\
			{\small Department of Mathematics, \\IIT Kharagpur}\\
			{\small WB 721302, India}\\
			{\small e-mail: hsmahato@maths.iitkgp.ac.in}
		\end{center}
	\end{minipage}

	\bigskip
	\hrule
	
	\bigskip
	
	\textbf{Abstract.}  We study the diffusion-reaction-advection model for mobile chemical species together with the dissolution and precipitation of immobile species in a porous medium at the micro-scale. This leads to a system of semilinear parabolic partial differential equations in the pore space coupled with a nonlinear ordinary differential equation at the grain boundary of the solid matrices. The fluid flow within the pore space is given by unsteady Stokes equation. The novelty of this work is to do the iterative limit analysis of the system by tackling the nonlinear terms, monotone multi-valued dissolution rate term, space-dependent non-identical diffusion coefficients and nonlinear precipitation (reaction) term. We also establish the existence of a unique positive global weak solution for the coupled system.  In addition to that, for upscaling we introduce a modified version of the extension operator. Finally, we conclude the paper by showing that the upscaled model admits a unique solution.
	
	\bigskip
	{\textbf{Keywords:} }  reactive transport in porous media, diffusion-reaction-advection systems, unsteady Stokes equations, crystal dissolution and precipitation, existence of solution, periodic homogenization. \\

	{\textbf{AMS subject classifications: }} 35K57, 35K55, 35B40, 35B27, 35K91, 76M50, 76S05, 47J35
	\bigskip
	\hrule
	
	\section{Introduction}
	Transport through porous media is encountered in several engineering and biological applications, e.g., \cite{bear1991introduction, levenspiel1998chemical, logan2001transport, missen1999introduction, rubin1983transport, saaf1997study}. Usually, the solute transport in the pore is modelled by diffusion (\textit{Fick's law}), dispersion and advection whereas the interaction amongst the solutes is given by \textit{mass action kinetics/law}. On the other hand, the activities of minerals (crystals) on the interfaces are given by dissolution and precipitation. Recently, several works have been done on dissolution and precipitation of minerals, e.g., \cite{knabner1986free, van1996crystal, knabner1995analysis, willis1987transport, van2007crystal, van2009crystal,showalter1997microstructure,peter2008different, peter2009multiscale} and references therein. In this paper, we shall consider a reversible reaction of two mobile species $A$ and $B$ and an immobile species $C$ connected via 
	\begin{equation}\label{eq2.5}
		A+B\rightleftharpoons C.
		% \qquad \text{ on }\quad \Gamma_\varepsilon^*.
	\end{equation}
	We note that \eqref{eq2.5} occurs on the interface of the solid parts in a porous medium. \eqref{eq2.5} type reactions are very common in problems on concrete carbonation, sulfate attack in sewer pipes, understanding of the dynamics of hematopoietic stem cells (HSCs), leaching of saline soil etc.
	% , where the transport processes of chemical species are considered in a three dimensional pore scale model, cf \cite{fatima2014sulfate, knabner1986free, peter2007scalings}. For  particular example, dissolved organic carbon(DOC) in soil and aquifers can be considered as a sorbent or carrier for organic contaminants. The interaction between hydrophobic organic chemicals (HOC) and soil or aquifer materials in the presence of mobile carriers such as DOC plays an important role in understanding the transport behavior of the HOC. Such a phenomenon of carrier-influenced transport in porous media together with experimental evidence has been studied in \cite{knabner1996modeling}. 
	In all the aforementioned references, the rate of precipitation is modelled by mass action law and the rate of dissolution is taken to be constant in such a way that when there is no mineral on the interface then the dissolution rate is zero and if there is mineral present then it is positive. In this work,
	we use nonlinear \textit{``Langmuir kinetics"} to model the surface reaction (precipitation) phenomena and the dissolution process is described by a discontinuous multivalued term. The concept of introducing the multivalued dissolution rate function has been explored in \cite{knabner1995analysis,van1996crystal,van2004crystal,van2009crystal}. The idea of using the \textit{``Langmuir kinetics"} comes from \cite{allaire2016upscaling,cardone2019homogenization}. In these papers, the authors have considered Langmuir kinetics for single chemical species to describe the flux boundary condition on the interface between the solids and the pore space.   In our case, we apply it for two different mobile chemical species $A$ and $B$ and an immobile species $C$ on the interface. 
	
	% A linear dissolution rate for equilibrium adsorption is proposed in \cite{knabner1995analysis}. 
	
	All in all, we obtained a system of semilinear parabolic (diffusion-reaction) equations coupled with a nonlinear ordinary differential equations (precipitation-dissolution). A multispecies diffusion-reaction model with same diffusion coefficients is proposed in 
	% The model considering advection, diffusion and reaction amongst mobile species is proposed in 
	\cite{krautle2011existence,mahato2013homogenization}, where the authors established the existence of a global weak solution 
	% for a system of diffusion-reaction equations with identical diffusion coefficients. The authors in \cite{krautle2011existence,mahato2013homogenization} 
	by incorporating a suitable Lyapunov functional and Schaefer's fixed point theorem to obtain an $L^\infty $ a-priori estimates.
	% and their result on existence of solution relies on Schaefer's fixed point theorem. 
	The existence theory in \cite{fischer2015global} is obtained by considering a renormalized solution to circumvent the question of $L^\infty-$boundness. 
	% In \cite{van1996crystal,van2004crystal,faucris.204435300,bothe2017global} the authors extended these results from the case where all the species are mobile to a case where some of the species can be immobile. 
	The model presented in \cite{bothe2017global} dealt with the adsorption reaction at the surface of the solid parts and then the existence of a global solution is proved. However, such a model with smooth rates is not suited to represent the case of precipitation and dissolution of minerals in real world situations. We consider the multi-valued discontinuous rate function to describe the dissolution process, cf. \cite{van1996crystal,van2004crystal}. The proof of the existence of a solution for the case of one single kinetic reaction with one mobile and one immobile can be found in \cite{knabner1986free}. For a multispecies diffusion-reaction-dissolution system with identical diffusion coefficients, an upscaled model is proposed in \cite{mahato2015existence}. The difficulties to prove the existence of a global in time solution for a system of diffusion-reaction equations with different diffusion coefficients have been elaborated in the survey paper \cite{pierre2010global}. Several partial results in this direction with certain restrictions on source term can be found in \cite{bothe2017global1, bothe2010quasi, bothe2015global, hoffmann2017existence}. A model describing processes of diffusion, convection and nonlinear reaction in a periodic array of cells can be found in \cite{hornung1994reactive}. In that article, the convergence of the nonlinear terms is achieved by utilizing their monotonicity. Homogenization of models of chemical reaction flows in a domain with periodically distributed reactive solid grains was studied by  Conca et al. \cite{conca2004homogenization}. They considered a stationary diffusion-reaction model with nonlinear, fast growing but monotone kinetics on the surface of solid grains and a model of diffusion-reaction processes both inside and outside of grains.

	Moreover, as we performed the homogenization of the micro model via asymptotic expansion, we at first removed the multivaluedness of the dissolution
	term by Lipschitz regularization. We noticed that if we pass the regularization parameter $\delta\rightarrow 0$ first to this term then it will become multi-valued and discontinuous again. Therefore, we are unable to apply Taylor series expansion and so we can not pass the homogenization limit as $\varepsilon\to 0$. This indicates that we have to pass the homogenization limit first, otherwise, we will have a multivalued discontinuous term whose expansion is not possible. This motivates us to do the iterative limit process and check whether the final outcome in both cases coincides with each other or not. 
	% We also introduce a modified version of the extension operator to upscale the concentrations from the oscillating domain to the fixed domain.
	
	% The motivation for studying such chemical species problems comes from the concrete carbonation, sulfate attack in sewer pipes, understanding of the dynamics of hematopoietic stem cells (HSCs) or leaching of saline soil where the transport processes of chemical species are considered in a three dimensional pore scale model, cf \cite{fatima2014sulfate, knabner1986free, peter2007scalings}. For  particular example, dissolved organic carbon(DOC) in soil and aquifers can be considered as a sorbent or carrier for organic contaminants. The interaction between hydrophobic organic chemicals (HOC) and soil or aquifer materials in the presence of mobile carriers such as DOC plays an important role in understanding the transport behavior of the HOC. Such a phenomenon of carrier-influenced transport in porous media together with experimental evidence has been studied in \cite{knabner1996modeling}. In addition, if we want to understand the processes in porous soil, catalyst, electrode or in a biological membrane for that case computer simulation has proven to be very useful. The micro system is not numerically efficient so homogenization achieved its importance in such places.    
	
	%In full generality, such system can have solutions which blow up in finite time \cite{pierre2000blowup},  since it is well known fact for the case of nonlinear heat equations \cite{giga1985asymptotically,weissler1985blow}.  
	
	The structure of the paper is as follows: we start off in section 2 with the introduction of the periodic setting of the domain and the microscale model equations. In section 3, we collect few mathematical tools required to analyze the model from section 2. Next, we obtain the a-priori estimates of the solution needed to pass the limit in section 3. Further, we employ Rothe's method to show the existence of a unique global in time weak solution for the regularized system. Section 4 and section 5 contains the proof of the two main theorems of the paper. It include iterative limit process. Since we have two limits to pass one is the regularization parameter $\delta$ and another one is the homogenization limit $\epsilon$. So which one we should pass first and what will be the final outcome if we reverse our limit process these questions are also addressed in this manuscript.

	\section{Setting of the Problem}  
	We consider a (bounded) porous medium $\Omega\subset \mathbb{R}^{n_{|n\ge 2}}$ which consists of a pore space $\Omega^p$ and the union of solid parts $\Omega^s$ in such a way that $\Omega:=\Omega^{p}\cup\Omega^{s}$ where $\Bar{\Omega}^{s}\cap \Omega^{p}=\phi$. The exterior boundary of the domain is denoted by $\partial\Omega$ and $\Gamma^*$ represents the union of boundaries of solid parts. 
	\begin{figure}[h!]
		\begin{center}
			\includegraphics[width=10cm, height=7cm]{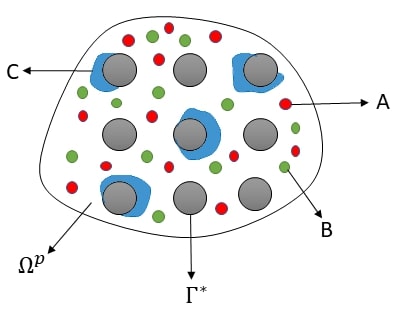}
			\caption{Mobile species in $\Omega^p$ with crystal dissolution and precipitation on $\Gamma^*$}
		\end{center}
	\end{figure} 
	Let $Y := (0,1)^n\subset \mathbb{R}^{n}$ be the unit periodicity cell which is composed of a solid part $Y^{s}$ with boundary $\Gamma$ and a pore part $Y^{p}$ such that $Y=Y^{s}\cup Y^{p}$, $\bar{Y}^{s}\subset Y$ and $\bar{Y}^{s}\cap\bar{Y}^{p}=\Gamma$. For each multi-index $k\in \mathbb{Z}^{n}$, we define the shifted sets as $Y_{k}:=Y+k$ and $Y^{\lambda}_{k}:=Y^{\lambda}+k$ for $\lambda \in \{ p,s\}$ and $\Gamma_{k}:=\Gamma+k$. Let $\Omega\subset \cup_{k\in \mathbb{Z}^n}Y_k$, which defines the periodicity assumption on $\Omega$. Let $0<\varepsilon\ll 1$ be the scaling parameter such that $\Omega$ is covered by the finite union of translated version of $\varepsilon Y_{k}$ cells, i.e. $\Omega \subset \underset{k\in \mathbb{Z}^{n}}{\cup} \varepsilon Y_{k}$, where $\varepsilon Y_{k}\subset \Omega$ for $k\in \mathbb{Z}^{n}$.
	% Moreover, we assume that $\Omega$ is $\epsilon$-periodic, where $\epsilon>0$ is a small positive parameter that means the solid parts in $\Omega$ are periodically distributed and can be covered by a finite union of the representative cells $Y$. The geometry defined above fulfills the assumptions that: $(i)$ the solid parts do not touch the outer boundary $\partial \Omega$, $(ii)$ solid parts do not touch each other and, $(iii)$ solid parts do not touch the boundary of $Y$.
	% \begin{figure}[h!]
		% 	\begin{center}
			% 		\includegraphics[width=11cm, height=7cm]{2.jpg}
			% 		\caption{Porous medium $\Omega$ is covered by translated version of the cell $Y$.}
			% 	\end{center}
		% \end{figure}
	% $\Omega$ is covered by the finite union of translated version of $\varepsilon Y_{k}$ cells such that $\varepsilon Y_{k}\subset \Omega$ for $k\in \mathbb{Z}^{n}$, i.e. $\Omega \subset \underset{k\in \mathbb{Z}^{n}}{\cup} \varepsilon Y_{k}, \;\Omega^{p}\subset \underset{k\in \mathbb{Z}^{n}}{\cup} \varepsilon Y^{p}_{k},\; \Omega^{s} \subset \underset{k\in \mathbb{Z}^{n}}{\cup} \varepsilon Y^{s}_{k}\text{ and }\Gamma^{*}\subset \underset{k\in \mathbb{Z}^{n}}{\cup} \varepsilon \Gamma_{k}$. 
	We also define $\Omega^{p}_{\varepsilon}:= \text{int}\cup_{k\in \mathbb{Z}^{n}}\left\{\varepsilon Y^{p}_{k}:\varepsilon Y^{p}_{k}\subset \Omega\right\}$- pore space, $\Omega^{s}_{\varepsilon}:= \cup_{k\in \mathbb{Z}^{n}}\left\{\varepsilon Y^{s}_{k}:\varepsilon Y^{s}_{k}\subset \Omega\right\}$ - union of solid surfaces, $\Gamma_{\varepsilon}^*:= \cup_{k\in \mathbb{Z}^{n}}\left\{\varepsilon \Gamma_{k}:\varepsilon \Gamma_{k}\subset \Omega\right\}$ - union of interfaces between pore space and solid surfaces, $\partial \Omega _{\varepsilon}^{p}:=\partial \Omega \cup \Gamma^*_{\varepsilon}$ - union of outer boundary and interface $\Gamma_{\varepsilon}^*$, cf. FIG. 1. Let $S:=[0,T)$ be the time interval for $T>0$. We also denote the volume elements in $Y$ and $\Omega$ as $dy, dx$ and the surface elements on $\Gamma$ and $\Gamma_\epsilon^*$ as $d\sigma_y$ and $d\sigma_x$. The characteristic (indicator) function, $\chi_{\varepsilon}(x)=\chi(\frac{x}{\varepsilon})$, of $\Omega_{\varepsilon}^{p}$ in $\Omega$ is defined by $\chi_\epsilon(x)=1 \textnormal{ for } x\in \Omega_\epsilon^p$ and $=0 \text{ for } x\in \Omega\setminus\Omega_{\varepsilon}^{p}$.
	% \begin{align*}
		% 	\chi_\epsilon(x)&\;\;=\;\;
		% 	\left\{
		% 	\begin{aligned}
			% 		& 1 && \textnormal{if } x\in \Omega_\epsilon^p,\\
			% 		& 0 && \textnormal{if } x\in \Omega\setminus\Omega_{\varepsilon}^{p}.
			% 	\end{aligned}\right.
		% \end{align*}
	
	Let the pore space $\Omega_\epsilon^p$ is filled by some fluid with a unknown fluid velocity $\vec{\textbf{q}_\epsilon}=\textbf{q}_\epsilon(t, x)$, where $(t, x)\in S\times\Omega_\epsilon^p$. In our setting, the flow geometry and the fluid viscosity $ (\mu>0)$ are not affected by the surface reaction. We impose a no slip boundary condition on the grain boundary. We describe the fluid flow by evolutionary Stokes equations to make the situation more realistic where $p_\epsilon$ represents the fluid pressure. Now along with that two mobile species $A$ (type I) and $B$ (type II) are present in $\Omega_{\varepsilon}^{p}$ and one immobile species $C$ is present on $\Gamma_{\varepsilon}^*$. The species $A$, $B$ and $C$ are connected via \eqref{eq2.5}.
	% following reaction:
	%  \begin{equation}\label{eq2.5}
		%  	A+B\rightleftharpoons C \qquad \text{ on }\quad \Gamma_\varepsilon^*.
		%  \end{equation}
	In our situation, there is no reaction happening amongst the mobile species in $\Omega^p_\epsilon$. We decompose the outer boundary as $\partial\Omega=\partial\Omega_{in}\cup\partial\Omega_{out}$, where on $\partial\Omega_{in}$ and $\partial\Omega_{out}$ we prescribe the inflow and outflow boundary conditions for the type-I and type-II mobile spices, respectively. 
	% we impose the influx and outflux of $A$ and $B$ i.e.  w. 
	As by (\ref{eq2.5}) $A$ and $B$ are supplied by the dissolution process on $\Gamma_\epsilon^*$, therefore, the flux condition for $A$ and $B$ on $\Gamma_\epsilon^*$ is equal to the rate of change of immobile species on $\Gamma_\epsilon^*$ which means we have an additional boundary condition due to activities on $\Gamma_\epsilon^*$. According to the relation (\ref{eq2.5}) one molecule of each $A$ and $B$ will give one molecule of $C$ which will be modelled by the surface reaction rate term coming from \textit{Langmuir kinetics}. On other hand, one molecule of $C$ will dissolve to give one molecule of each $A$ and $B$. The modeling of dissolution process is adopted from \cite{knabner1995analysis,van1996crystal,van2004crystal}. In case of dissolution at the surface of the solids, it is assumed to be constant if the mineral is present. If the mineral is absent, dissolution rate can not be stronger than precipitation in order to maintain the non-negativity of the surface concentration. This leads to a multivalued dissolution term $R_{d}(w_{\varepsilon})\in k_{d}\psi(w_{\varepsilon})$, where
	\begin{align}\label{1.15}
		\psi (c)&\;\;=\;\;
		\left\{
		\begin{aligned}
			&\{0\} && \textnormal{if }c< 0,\\
			&[0,1]&& \textnormal{if }c=0,\\
			&\{1\} && \textnormal{if }c>0.
		\end{aligned}\right.
	\end{align}
	Let the concentrations of $A$, $B$ and $C$ be given by $u_\varepsilon$, $v_\varepsilon$ and $w_\varepsilon$, respectively. Then, the unsteady Stokes equations and the mass-balance equations for $A, B$ and $C$ are given by
	\begin{subequations}
		\begin{align}
			\frac{\partial \textbf{q}_\epsilon}{\partial {t}}-\epsilon^2\mu\Delta \textbf{q}_\epsilon&=-\nabla p_\epsilon \;\text{ in } S\times\Omega^p_\epsilon,\label{eqn:v1}\\
			\nabla.\textbf{q}_\epsilon&=0 \;\text{ in } S\times\Omega^p_\epsilon, \label{eqn:v2}\\
			\textbf{q}_\epsilon&=0 \; \text{ on } S\times\partial\Omega_\epsilon^p,  \label{eqn:v3}\\
			\textbf{q}_\epsilon(0,x)&=\textbf{q}_0(x) \; \text{ in } \Omega_\epsilon^p, \label{eqn:v4}
		\end{align}
	\end{subequations}\vspace{-1.0cm}
	\begin{subequations}
		\begin{align}
			\frac{\partial u_\epsilon}{\partial {t}} -\nabla.({\bar{D}^\epsilon_1}\nabla u_\epsilon-\textbf{q}_\epsilon u_\epsilon)&=0\; \text{ in } S\times \Omega_\epsilon^p,\label{eqn:M11}\\
			-({\bar{D}^\epsilon_1}\nabla u_\epsilon-\textbf{q}_\epsilon u_\epsilon).\vec{n}&=d(t,x)\; \text{ on } S\times \partial\Omega_{in},\label{eqn:M12}\\
			-({\bar{D}^\epsilon_1}\nabla u_\epsilon-\textbf{q}_\epsilon u_\epsilon).\vec{n}&=e(t,x)\; \text{ on } S\times \partial\Omega_{out},\label{eqn:M13}\\
			-({\bar{D}^\epsilon_1}\nabla u_\epsilon-\textbf{q}_\epsilon u_\epsilon).\vec{n}&=\epsilon{\frac{\partial w_\epsilon}{\partial {t}}}\; \text{ on } S\times \Gamma^*_\epsilon,\label{eqn:M14}\\
			u_\epsilon(0,x)&=u_0(x)\; \text{ in }\Omega^p_\epsilon,\label{eqn:M15}
		\end{align}
	\end{subequations}\vspace{-0.95cm}
	\begin{subequations}
		\begin{align}
			\frac{\partial v_\epsilon}{\partial {t}} - \nabla.({\bar{D}^\epsilon_2}\nabla v_\epsilon-\textbf{q}_\epsilon v_\epsilon)&=0 \; \text{ in } S\times \Omega_\epsilon^p,\label{eqn:M21}	\\
			- \nabla.({\bar{D}^\epsilon_2}\nabla v_\epsilon-\textbf{q}_\epsilon v_\epsilon).\vec{n}&=g(t,x) \;\text{ on } S\times \partial\Omega_{in},\label{eqn:M22}\\
			- \nabla.({\bar{D}^\epsilon_2}\nabla v_\epsilon-\textbf{q}_\epsilon v_\epsilon).\vec{n}&=h(t,x) \;\text{ on } S\times \partial\Omega_{out},\label{eqn:M23}\\
			- \nabla.({\bar{D}^\epsilon_2}\nabla v_\epsilon-\textbf{q}_\epsilon v_\epsilon).\vec{n}&=\epsilon{\frac{\partial w_\epsilon}{\partial {t}}} \;\text{ on } S\times \Gamma^*_\epsilon,\label{eqn:M24}\\
			v_\epsilon(0,x)&=v_0(x)\; \text{ in }\Omega^p_\epsilon,\label{eqn:M25}
		\end{align}
	\end{subequations}	\vspace{-0.95cm}
	\begin{subequations}
		\begin{align}
			\frac{\partial w_\epsilon}{\partial  {t}}&=k_d(R(u_\epsilon,v_\epsilon)-z_\epsilon) \; \text{ on } S\times  \Gamma^*_\epsilon, \label{eqn:MN1}\\
			z_\epsilon&\in\psi(w_\epsilon) \; \text{ on }S\times \Gamma^*_\epsilon,\label{eqn:MN2}\\
			w_\epsilon(0,x)&=w_0(x)\; \text{ in }\Gamma_\epsilon^*,	\label{eqn:MN3}
		\end{align}
	\end{subequations}
	where $R : \mathbb{R}^2 \rightarrow [0, \infty)$ is defined by
	\begin{align*}
		R(u_\epsilon, v_\epsilon)&\;\;=\;\;
		\left\{
		\begin{aligned}
			&k\frac{k_1u_\epsilon k_2 v_\epsilon}{(1+k_1u_\epsilon+k_2v_\epsilon)^2}&& \textnormal{if } (u_\epsilon, v_\epsilon)\in [0,\infty)^2,\\
			&0&&\textnormal{otherwise }
		\end{aligned}\right.
	\end{align*} 
	and $k=\frac{k_f}{k_d}$. $k_f$ and $k_d$ are the forward and backward reaction rate constants, and $k_1$ and $k_2$ are the \textit{Langmuir parameters} for the mobile species $A$ and $B$, respectively. Here we considered \textit{Langmuir kinetics} for the surface reaction and the rate of dissolution $R_d=k_d\psi(w_\epsilon).$ We note that the diffusion coefficients are nonconstant and non-periodic, i.e. $\bar{D}^\epsilon_i=diag(D_i(x,\frac{x}{\epsilon}), D_i(x,\frac{x}{\epsilon}),. . ., D_i(x,\frac{x}{\epsilon}))$, $i\in\{1,2\}$ it depends on the space variables. We denote the problem $\eqref{eqn:v1} - (\ref{eqn:MN3})$ by $(\mathcal{P}_{\epsilon})$. For $(\mathcal{P}_{\epsilon})$, we would look for the existence of a unique global weak solution and then we upscale the model from micro scale to macro scale. These two tasks are being formulated into two major theorems of this manuscript in section 3. 
	\section{Analysis of the microscopic model ($\mathcal{P}_\epsilon$)}
	\subsection{Function Space Setup}
	Let $\theta \in [0,1]$ and $p>n+2$ be such that $\frac{1}{p}+\frac{1}{q}=1$. Note that $p$ as the superscript in $\Omega^{p}_{\varepsilon}$ is used to signify the pore space and should not be confused with the exponents of the function spaces defined here. Assume that $\Xi\in \{\Omega, \Omega^{p}_{\varepsilon}\}$, then as usual $L^{p}(\Xi)$, $H^{1,p}(\Xi)$, $C^{\theta}(\bar{\Xi})$, $(\cdot,\cdot)_{\theta,p}$ and $[\cdot,\cdot]_{\theta}$ are the Lebesgue, Sobolev, H\"older, real- and complex-interpolation spaces respectively endowed with their standard norms, for details see \cite{evans2010partial}. We denote $\|f\|^p_{\Xi}=\int_{\Xi}|f|^p\,dx $ and $\|f\|^p_{(\Xi)^T}=\int_{S\times\Xi}|f|^p\,dx\,dt $. Further, $C_0^\infty(S\times\Omega ; C_{per}^\infty(Y))$ is the space of all $Y-$periodic continuously differentiable functions in $t, x, y$ with compact support inside $\Omega$. In particular, $C^{\gamma}_{per}(Y)$ denotes the set of all \textit{Y$-$periodic $\gamma-$times} continuously differentiable functions in $y$ for $\gamma\in \mathbb{N}$. For a Banach space $X$, $X^{*}$ denotes its dual and the duality pairing is denoted by $\langle.\; ,\; .\rangle_{X^{*}\times X}$. The symbols $\hookrightarrow$, $\hookrightarrow\hookrightarrow$ and $\underset{\hookrightarrow}{d}$ denote the continuous, compact and dense embeddings, respectively. Further, we use the symbols $\rightarrow, \rightharpoonup, \overset{2}{\rightharpoonup}$ to represent the strong, weak and two-scale convergence of a sequence respectively. We define $L^{p}(\Xi)\hookrightarrow H^{1,q}(\Xi)^{*}$ as 
	\begin{align*}
		\langle f, v\rangle_{H^{1,q}(\Xi)^{*}\times H^{1,q}(\Xi)}=\langle f,v\rangle_{L^{p}(\Xi)\times L^{q}(\Xi)}:=\int_{\Xi}f v\,dx \textnormal{ for }f\in L^{p}(\Xi),\; v\in H^{1,q}(\Xi).
	\end{align*}
	By section 2, we note that $\lim_{\epsilon \to 0}\varepsilon |\Gamma_{\varepsilon}^*|=|\Gamma|\frac{|\Omega|}{|Y|}$. Since the surface area of $\Gamma_{\varepsilon}^*$ increases proportionally to $\frac{1}{\varepsilon}$, i.e. $|\Gamma_{\varepsilon}^*|\to \infty$ as $\varepsilon\to 0$, we introduce the $L^{p}(\Gamma_{\varepsilon}^*)-L^{q}(\Gamma_{\varepsilon}^*)$ duality as
	\begin{align*}
		\langle \zeta_{1},\zeta_{2}\rangle:=\varepsilon\int_{\Gamma_{\varepsilon}^*}\zeta_{1}\zeta_{2}\,d\sigma_{x}\text{ for }\zeta_{1}\in L^{p}(\Gamma_{\varepsilon}^*), \; \zeta_{2}\in L^{q}(\Gamma_{\varepsilon}^*)\label{2.2}
	\end{align*}
	and the space $L^{p}(S \times \Gamma_{\varepsilon}^*)$ is furnished with the norm
	\begin{align}
		\|\zeta\|^{p}_{(\Gamma_{\varepsilon}^*)^T}&\;\;:=\;\;
		\left\{
		\begin{aligned}
			&\varepsilon\int_{S\times \Gamma_{\varepsilon}^*}|\zeta(t,x)|^{p}\,d\sigma_{x} \,dt&& \textnormal{for } 1\le p <\infty,\\
			&\underset{(t,x) \in S\times \Gamma_{\varepsilon}^*}{\text{ess sup}}{|\zeta(t,x)|}&&\textnormal{for } p=\infty.
		\end{aligned}\right.
	\end{align}     
	We denote $\textbf{L}^2(\Omega_\epsilon^p)= (L^2(\Omega_\epsilon^p))^n,  \textbf{H}^{1,2}(\Omega_\epsilon^p) = (H^{1,2}(\Omega_\epsilon^p))^n$, likewise $\textbf{H}_0^{1,2}(\Omega_\epsilon^p) = (H_0^{1,2}(\Omega_\epsilon^p))^n$. The space of divergence free vector fields is denoted by $\textbf{H}^{1, 2}_{div}(\Omega_\epsilon^p)=\{\zeta \in \textbf{H}^{1, 2}_0(\Omega_\epsilon^p) : \nabla.\zeta=0 \}$. The corresponding dual space is $\textbf{H}^{-1, 2}_{div}(\Omega_\epsilon^p)$. The space $L^2_0(\Omega_\epsilon^p)=\{\zeta\in L^2(\Omega_\epsilon^p) : \int_{\Omega_\epsilon^p}\zeta dx=0 \}$.

	We take $p=2$. The \textit{Sobolev-Bochner spaces} are given by: $\mathcal{Q}_\epsilon :=\{\textbf{q}_\epsilon\in L^2(S; \textbf{H}^{1, 2}_{div}(\Omega_\epsilon^p) ) : \frac{\partial q_\epsilon}{\partial t}\in L^2(S ; \textbf{H}^{-1, 2}_{div}(\Omega_\epsilon^p))\}, \mathcal{U}_\epsilon :=\{u_\epsilon\in L^2(S ; H^{1,2}(\Omega_\epsilon^p)):\frac{\partial u_\epsilon}{\partial t}\in L^2(S ; H^{1,2}(\Omega_\epsilon^p)^*)\}:= H^{1,2}(S ; H^{1,2}(\Omega_\epsilon^p)^*) \cap L^2(S ; H^{1,2}(\Omega_\epsilon^p)), \mathcal{V}_\epsilon :=\{v_\epsilon\in L^2(S ; H^{1,2}(\Omega_\epsilon^p)) : \frac{\partial v_\epsilon}{\partial t}\in L^{2}(S ; H^{1,2}(\Omega_\epsilon^p)^*)\}:= H^{1,2}(S ; H^{1,2}(\Omega_\epsilon^p)^*) \cap  L^2(S ; H^{1,2}(\Omega_\epsilon^p)), \mathcal{W}_\epsilon :=\{w_\epsilon\in L^2(S ; L^2(\Gamma^*_\epsilon)) : \frac{\partial w_\epsilon}{\partial t}\in L^2(S ; L^2(\Gamma^*_\epsilon)) \}:= H^{1,2}(S ; H^{1,2}(\Gamma^*_\epsilon)), 	\mathcal{Z}_\epsilon :=\{z_\epsilon\in L^\infty(S\times\Gamma^*_\epsilon)\,: 0\leq z_\epsilon \leq1\}, $
	% \mathcal{X}_{p}(\Xi) :=(H^{1,q}(\Xi)^{*}, H^{1,p}(\Xi))_{1-\frac{1}{p},p},$ 
	where $\frac{\partial}{\partial t}$ is the distributional time derivative and their respective norms can be defined as 
	\begin{align} 
		&\|u\|_{\mathcal{U}_\epsilon}:=\|u\|_{L^{2}(S; H^{1,2}(\Omega_\epsilon^p))}+\left\lVert\frac{\partial u}{\partial t}\right\rVert_{L^{2}(S; H^{1,2}(\Omega_\epsilon^p)^{*})}.\nonumber
	\end{align}
	\subsection{Weak formulation and assumptions on data}
	The vector-valued function space $\mathcal{Q}_\epsilon\times\mathcal{U}_\epsilon\times\mathcal{V}_\epsilon\times\mathcal{W}_\epsilon\times\mathcal{Z}_\epsilon$ is the solution space. We say that  $(\textbf{\textbf{q}}_\epsilon, u_\epsilon, v_\epsilon, w_\epsilon, z_\epsilon)\in \mathcal{Q}_\epsilon\times\mathcal{U}_\epsilon\times\mathcal{V}_\epsilon\times\mathcal{W}_\epsilon\times\mathcal{Z}_\epsilon$ is a weak solution of the problem $(\mathcal{P}_\epsilon)$ if $(\textbf{\textbf{q}}_\epsilon(0), u_\epsilon(0), v_\epsilon(0), w_\epsilon(0))=(\textbf{q}_0, u_0, v_0, w_0)\in \textbf{L}^2(\Omega_\epsilon^p)\times L^2(\Omega_\epsilon^p)\times L^2(\Omega_\epsilon^p)\times L^2(\Gamma_\epsilon^*)$ and 
	\begin{subequations}
		\begin{align} 
			&\langle \frac{\partial \textbf{q}_\epsilon}{\partial t}, \psi\rangle_{(\Omega_\epsilon^p)^T}+\epsilon^2\mu\langle \nabla \textbf{q}_\epsilon, \nabla \psi\rangle_{(\Omega_\epsilon^p)^T}=0,\label{eqn:vw}\\
			&\langle\frac{\partial u_\epsilon}{\partial t},\phi\rangle_{(\Omega_\epsilon^p)^T}+\langle \bar{D}^\epsilon_1\nabla u_\epsilon-\textbf{q}_\epsilon u_\epsilon,\nabla\phi\rangle_{(\Omega_\epsilon^p)^T}=-\langle\frac{\partial w_\epsilon}{\partial t},\phi\rangle_{(\Gamma^*_\epsilon)^T}-\langle d,\phi\rangle_{(\partial\Omega_{in})^T}-\langle e,\phi\rangle_{(\partial\Omega_{out})^T},\label{eqn:M1WF}\\
			&\langle\frac{\partial v_\epsilon}{\partial t},\theta\rangle_{(\Omega_\epsilon^p)^T}+\langle \bar{D}^\epsilon_2\nabla v_\epsilon-\textbf{q}_\epsilon v_\epsilon,\nabla\theta\rangle_{(\Omega_\epsilon^p)^T}=-\langle\frac{\partial w_\epsilon}{\partial t},\theta\rangle_{(\Gamma^*_\epsilon)^T}-\langle g,\theta\rangle_{(\partial\Omega_{in})^T}-\langle h,\theta\rangle_{(\partial\Omega_{out})^T}, \label{eqn:M2WF}\\
			&\langle\frac{\partial w_\epsilon}{\partial t},\eta\rangle_{(\Gamma^*_\epsilon)^T}=k_d\langle R(u_\epsilon, v_\epsilon)-z_\epsilon, \eta\rangle_{(\Gamma^*_\epsilon)^T},\; z_\epsilon\in\psi(w_\epsilon)\text{ a.e. on }(\Gamma^*_\epsilon)^T, \label{eqn:M12WF}
		\end{align}
		for all $\psi\in L^2(S; \textbf{H}^{1,2}_{div}(\Omega_\epsilon^p))$ and $(\phi,\theta,\eta)\in L^2(S;H^{1,2}(\Omega_\epsilon^p))\times L^2(S;H^{1,2}(\Omega_\epsilon^p))\times L^2(S; L^2(\Gamma^*_\epsilon))$,
	\end{subequations}
	where 
	\begin{align*}
		&\langle u,v \rangle_{(\Omega_\epsilon^p)^T}=\int_{0}^{T}\int_{\Omega_\epsilon^p}uvdxdt\text{ and }\langle u,v \rangle_{(\Gamma^*_\epsilon)^T}=\epsilon\int_{0}^{T}\int_{\Gamma^*_\epsilon}uv d\sigma_xdt,\\
		&\|u\|^2_{(\Omega_\epsilon^p)^T}=\int_{0}^{T}\int_{\Omega_\epsilon^p}u^2dxdt\text{ and }\|u\|^2_{(\Gamma^*_\epsilon)^T}=\epsilon\int_{0}^{T}\int_{\Gamma^*_\epsilon}u^2 d\sigma_xdt.
	\end{align*}
	Moreover, for each weak solution $(\textbf{q}_\epsilon, u_\epsilon, v_\epsilon, w_\epsilon)$ we associate a pressure $p_\epsilon :=\partial_t P_\epsilon, P_\epsilon\in L^\infty (S; L^2_0(\Omega_\epsilon^p))$ which satisfies \eqref{eqn:v1} in the distributional sense.\\ 
	For a $\beta \in \mathbb{R}$, we use $\beta_{+} := max\{ \beta, 0 \}\ge 0 \; \text{ and } \beta_{-} := max\{ -\beta, 0 \}\ge 0,$ so it holds that $\beta=\beta_{+}-\beta_{-}$. We will make following assumptions for the sake of analysis:
	% the following notation
	% \begin{align*}
		% 	\beta_{+} := max\{ \beta, 0 \}\ge 0 \; \text{ and } \beta_{-} := max\{ -\beta, 0 \}\ge 0,
		% \end{align*}
	% so it holds that $\beta=\beta_{+}-\beta_{-}$. We will make following assumptions for the sake of analysis:	
	
	\begin{itemize}
		\item[\bf A1.] $u_0(x), v_0(x), w_0(x) \geq 0.$\qquad {\bf A2.}  $R(u_\epsilon, v_\epsilon) = 0$  for all  $u_\epsilon\leq 0, v_\epsilon\leq0.$
		% \item[\bf A2.]  $R(u_\epsilon, v_\epsilon) = 0$  for all  $u_\epsilon\leq 0, v_\epsilon\leq0.$
		\item[\bf A3.] $d, g \in L^2(S\times\partial\Omega_{in})$, $e, h \in L^2(S\times\partial\Omega_{out})$.\qquad {\bf A4.} $\textbf{q}_0\in \textbf{L}^2(\Omega), u_0, v_0\in H^{1,2}(\Omega)$ and $w_0\in L^\infty(\Omega)$. 
		% \item[\bf A4.] $u_0, v_0\in L^2(\Omega)$ and $w_0\in L^\infty(\Omega)$. 
		\item [\bf A5.] $p_\epsilon\in L^2(S; H^{1,2}(\Omega_\epsilon^p))$ such that $\sup_{\epsilon>0} \|p_\epsilon\|_{L^2(S; H^{1,2}(\Omega_\epsilon^p))}<\infty$.
		\item[\bf A6.] $\bar{D}^\epsilon_i=diag(D_i(x,\frac{x}{\epsilon}), D_i(x,\frac{x}{\epsilon}),. . ., D_i(x,\frac{x}{\epsilon}))$, $i\in\{1,2\}$ such that
		\begin{itemize}
			\item for any $\zeta\in \mathbb{R}^n$, $(D_i(x,y)\zeta,\zeta)\ge \alpha|\zeta|^2$ where $\alpha>0$ is a constant independent of $\epsilon$.
			\item there exists a constant $M>0$ such that $\|\bar{D}^\epsilon_i(x,y)\|_{L^\infty(\Omega)^{n\times n}}\le M$ for every $y\in Y$
			\item[] and 
		\end{itemize} 
		\begin{align}\label{eqn:DC}
			\lim_{\epsilon\rightarrow 0}\int_{\Omega}D_i(x,\frac{x}{\epsilon})dx=\int_{\Omega}\int_{Y} D_i(x,y) dxdy \text{ for } i=1,2.
		\end{align}
	\end{itemize}
	Here the \textit{Langmuir} reaction rate term $R:\mathbb{R}^2\rightarrow[0,\infty)$ is locally Lipschitz in $\mathbb{R}^2$ such that 
	\begin{align}\label{eqn:LL}
		|R(u_\epsilon^1, v_\epsilon)-R(u^2_\epsilon, v_\epsilon)|\leq L_R|u_\epsilon^1-u^2_\epsilon|,  	
	\end{align}
	where $L_R>0$ is a constant. $L_R=\sup L_R(u_\epsilon, v_\epsilon)$ where $L_R(u_\epsilon,v_\epsilon)=kk_1k_2|v_\epsilon||(1+k_2v_\epsilon)^2+k_1^2u_\epsilon^1 u_\epsilon^2|$.
	
	\subsection{\textbf{A regularized problem}}
	As we can see in the system of equations $\eqref{eqn:v1} - \eqref{eqn:MN3}$, the multivaluedness of the dissolution rate term in \eqref{eqn:MN1} creates the main difficulty in showing the well-posedness of ($\mathcal{P}_\varepsilon$). To overcome that we introduce a regularization parameter $\delta>0$ and replace $\psi(w_\epsilon)$ by $\psi_\delta(w_\epsilon)$, where
	\begin{equation}\label{eqn:MDR}
		\psi_\delta(w_\epsilon)=\begin{cases}
			\text{0} \quad\text{\hspace{0.25cm}if $w_\epsilon < 0$},\\
			\frac{w_\epsilon}{\delta}\quad\text{if  $w_\epsilon\in(0,\delta$)},\\
			\text{1} \quad\text{\hspace{0.25cm}if $w_\epsilon > \delta$}.
		\end{cases}   
	\end{equation}
	The \textit{regularized problem}  ($\mathcal{P}_{\epsilon\delta}$) is given by:
	% \fbox{%
		% 	\parbox{\textwidth}{
			\textit{find $(\textbf{q}_\epsilon, u_{\epsilon\delta}, v_{\epsilon\delta}, w_{\epsilon\delta})\in\mathcal{Q}_\epsilon\times \mathcal{U}_\epsilon\times\mathcal{V}_\epsilon\times\mathcal{W}_\epsilon$ such that $(\textbf{q}_\epsilon(0), u_{\epsilon\delta}(0), v_{\epsilon\delta}(0), w_{\epsilon\delta}(0))=(\textbf{q}_0, u_0, v_0, w_0)$ and 
				\begin{subequations}
					\begin{align} 
						&\langle \frac{\partial \textbf{q}_\epsilon}{\partial t}, \psi\rangle_{(\Omega_\epsilon^p)^T}+\epsilon^2\mu\langle \nabla \textbf{q}_\epsilon, \nabla \psi\rangle_{(\Omega_\epsilon^p)^T}=0, \label{eqn:vrw}\\
						&\langle\frac{\partial u_{\epsilon\delta}}{\partial t},\phi\rangle_{(\Omega_\epsilon^p)^T}+\langle \bar{D}^\epsilon_1\nabla u_{\epsilon\delta}-\textbf{q}_\epsilon u_{\epsilon\delta},\nabla\phi\rangle_{(\Omega_\epsilon^p)^T}=-\langle\frac{\partial w_{\epsilon\delta}}{\partial t},\phi\rangle_{(\Gamma^*_\epsilon)^T}-\langle d,\phi\rangle_{(\partial\Omega_{in})^T}-\langle e,\phi\rangle_{(\partial\Omega_{out})^T}\label{eqn:M1RWF},\\
						&\langle\frac{\partial v_{\epsilon\delta}}{\partial t},\theta\rangle_{(\Omega_\epsilon^p)^T}+\langle \bar{D}^\epsilon_2\nabla v_{\epsilon\delta}-\textbf{q}_\epsilon v_{\epsilon\delta},\nabla\theta\rangle_{(\Omega_\epsilon^p)^T}=-\langle\frac{\partial w_{\epsilon\delta}}{\partial t},\theta\rangle_{(\Gamma^*_\epsilon)^T}-\langle g,\theta\rangle_{(\partial\Omega_{in})^T}-\langle h,\theta\rangle_{(\partial\Omega_{out})^T}  \label{eqn:M2RWF},\\
						&\langle\frac{\partial w_{\epsilon\delta}}{\partial t},\eta\rangle_{(\Gamma^*_\epsilon)^T}=k_d\langle R(u_{\epsilon\delta}, v_{\epsilon\delta})-\psi_\delta(w_{\epsilon\delta}), \eta\rangle_{(\Gamma^*_\epsilon)^T} \label{eqn:M12RWF},
					\end{align}
					for all $\psi\in L^2(S; \textbf{H}^{1,2}_{div}(\Omega_\epsilon^p))$ and $(\phi, \theta, \eta )  \in L^2(S;H^{1,2}(\Omega_\epsilon^p))\times L^2(S;H^{1,2}(\Omega_\epsilon^p)) \times L^2(S; L^2(\Gamma^*_\epsilon))$. As the fluid velocity $\textbf{q}_\epsilon$ does not depend on the concentrations of the chemical species.
			\end{subequations}}
			% }}\\
	
	\vspace{0.1cm}
	\begin{rem}
		In the regularized problem, we have two limits to pass to zero, one is the regularization parameter $\delta$ and another one is the homogenization limit $\epsilon$.  Since it is now an iterative limit problem, which one to pass first is a matter of interest. We find out the iterative limit equations and show that the final macroscopic equations remains the same, so which one we are passing first does not matter in this context. This is somewhat now stated in the two main theorems of this paper which are given below.
	\end{rem}
	\begin{thm}\label{thm:1}
		There exists a unique positive global weak solution $(\textbf{q}_\epsilon, u_\epsilon,v_\epsilon,w_\epsilon,z_\epsilon)\in\mathcal{Q}_\epsilon\times\mathcal{U}_\epsilon\times\mathcal{V}_\epsilon\times\mathcal{W}_\epsilon\times\mathcal{Z}_\epsilon$ of the problem ($\mathcal{P}_\epsilon$) satisfying the a-priori estimate:
		\begin{align}\label{eqn:UB}
			&\|\textbf{q}_\epsilon\|_{L^\infty(S; \textbf{L}^2(\Omega_\epsilon^p))}+\sqrt{\mu}\epsilon \|\nabla\textbf{q}_\epsilon\|_{(L^2(S\times\Omega_\epsilon^p))^{n\times n}}+\left\lVert\frac{\partial \textbf{q}_\epsilon}{\partial t}\right\rVert_{L^2(S; \textbf{H}^{-1,2}_{div}(\Omega_\epsilon^p))}+\|u_{\epsilon}\|_{(\Omega_\epsilon^p)^T}+\|\nabla u_{\epsilon}\|_{(\Omega_\epsilon^p)^T} \notag\\
			&+\left\lVert \frac{\partial u_{\epsilon} }{\partial t}\right\rVert_{L^{2}(S; H^{1,2}(\Omega_\epsilon^p)^{*})}+\|v_{\epsilon}\|_{(\Omega_\epsilon^p)^T}+\|\nabla v_{\epsilon}\|_{(\Omega_\epsilon^p)^T}+\left\lVert \frac{\partial v_{\epsilon} }{\partial t}\right\rVert_{L^{2}(S; H^{1,2}(\Omega_\epsilon^p)^{*})}+\|w_{\epsilon}\|_{(\Gamma_\epsilon^*)^T}+\left\lVert\frac{\partial w_{\epsilon}}{\partial t}\right\rVert_{(\Gamma_\epsilon^*)^T}\le C,
		\end{align}
		where $C$ is a generic constant independent of $\epsilon$. Moreover, 	under the assumptions $(\textbf{A1}.)-(\textbf{A6}.)$, there exist $(\textbf{q}, u,v,w,z)\in [ (L^2(S\times\Omega; H^{1,2}_{per}(Y))^n\times          H^{1,2}(S ; H^{1,2}(\Omega)^*) \cap L^2(S ; H^{1,2}(\Omega))]^2\times H^{1,2}(S\times\Omega\times\Gamma)\times L^\infty(S\times\Omega\times\Gamma)$ such that $(\textbf{q}, u,v,w,z)$ is the unique solution of the (homogenized) problem
		\begin{align}\label{eqn:qs}
			\begin{split}
				\frac{\partial \textbf{q}}{\partial t}-\mu \Delta_y \textbf{q}&=-\nabla_yp_1(t, x, y)-\nabla_x p(t, x)\quad \text{in}\quad S\times\Omega\times Y^p,\\
				\nabla_y.\textbf{q}&=0 \quad \text{in} \quad S\times\Omega\times Y^p,\\
				\nabla_x.\bar{\textbf{q}}&=0 \quad \text{in}\quad S\times\Omega,\\
				\textbf{q}(t, x, y)&=0 \quad\text{in} \quad S\times\Omega\times\Gamma,\\
				\bar{\textbf{q}}.\vec{n}&=0 \quad\text{in}\quad S\times\partial\Omega,\\
				\textbf{q}(0,x) &=\textbf{q}_0(x)\quad\text{in}\quad\Omega,
			\end{split}			
		\end{align} \vspace{-0.6cm}
		\begin{align}\label{eqn:M1S}
			\begin{split}
				\frac{\partial u}{\partial {t}} - \nabla.(A\nabla u-(\bar{\textbf{q}}-\tilde{\textbf{q}}_0)u)+P(t,x)&=0 \quad \text{in}\quad S\times \Omega,	\\
				-(A\nabla u-(\bar{\textbf{q}}-\tilde{\textbf{q}}_0)u).\vec{n}&=\frac{d}{|Y^p|}\quad\text{on}\quad S\times \partial\Omega_{in},\\
				-(A\nabla u-(\bar{\textbf{q}}-\tilde{\textbf{q}}_0)u).\vec{n}&=\frac{e}{|Y^p|}\quad\text{on}\quad S\times \partial\Omega_{out},\\
				u(0,x)&=u_0(x)\quad\text{in}\quad\Omega, 
			\end{split}
		\end{align} \vspace{-0.5cm}
		\begin{align}\label{eqn:M2S}
			\begin{split}
				\frac{\partial v}{\partial {t}} - \nabla.(B\nabla v-(\bar{\textbf{q}}-\tilde{\textbf{q}}_0)v)+P(t,x)&=0 \quad \text{in}\quad S\times \Omega,	\\
				-(B\nabla v-(\bar{\textbf{q}}-\tilde{\textbf{q}}_0)v).\vec{n}&=\frac{g}{|Y^p|}\quad\text{on}\quad S\times \partial\Omega_{in},\\
				-(B\nabla v-(\bar{\textbf{q}}-\tilde{\textbf{q}}_0)v).\vec{n}&=\frac{h}{|Y^p|}\quad\text{on}\quad S\times \partial\Omega_{out},\\
				v(0,x)&=v_0(x)\quad\text{in}\quad\Omega,
			\end{split}
		\end{align}\vspace{-0.5cm}
		\begin{align}\label{eqn:M12S}
			\begin{split}
				\frac{\partial w}{\partial  {t}}&=k_d(R(u,v)-z)\quad\text{in}\quad S\times\Omega\times \Gamma, \\ 
				z&\in\psi(w)\quad\text{in}\quad S\times\Omega\times \Gamma,\\
				w(0,x)&=w_0(x)\quad\text{on}\quad \Omega\times\Gamma,	
			\end{split} 
		\end{align}
		where 
		\begin{align*}
			\tilde{\textbf{q}}_0=\frac{1}{|Y^p|}\int_{Y^p}D_1(x,y)\nabla_y k_0 dy,\;\bar{\xi}=\frac{1}{|Y^p|}\int_{Y^p}\xi (t, x, y)dy,\;	P(t,x)=\int_{\Gamma}\frac{1}{|Y^p|}\frac{\partial w}{\partial t}d\sigma_y
		\end{align*}
		and the coefficient matrices $A(t,x)=(a_{ij})_{1\le i,j\le n}$ and $B(t,x)=(b_{ij})_{1\le i,j\le n}$ are defined by
		\begin{align*}
			a_{ij}(t,x)=\int_{Y^p}\frac{D_1(x,y)}{|Y^p|}\left(\delta_{ij}+\sum_{i,j=1}^{n}\frac{\partial k_j}{\partial y_i}\right)dy, \quad b_{ij}(t,x)=\int_{Y^p}\frac{D_2(x,y)}{|Y^p|}\left(\delta_{ij}+\sum_{i,j=1}^{n}\frac{\partial k_j}{\partial y_i}\right)dy.
		\end{align*}
		Furthermore, $k_j(t,x,y)\in L^\infty(S\times\Omega ; H^{1,2}_{per}(Y))$, for $j=1,2,\cdots,n$ are the solutions of the cell problems 
		\begin{align}\label{eqn:cp}
			\begin{cases}
				-(div)_y(D_i(x,y)(\nabla_y k_j+e_j+\nabla_y k_0-\textbf{q}))=0 \text{ for all }y\in Y^p,\\
				-D_i(x,y)(\nabla_y k_j+e_j+\nabla_y k_0-\textbf{q}).\vec{n}=0 \text{ on } \Gamma,\\
				y \mapsto k_j(y) \text{ is } Y-\text{periodic},
			\end{cases}
		\end{align}
		for $i=1,2$ and for almost every $x\in\Omega$.
	\end{thm}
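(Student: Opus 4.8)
The plan is to prove the two assertions in sequence: first the well-posedness of the microscopic problem $(\mathcal{P}_\epsilon)$ together with the $\epsilon$-uniform bound \eqref{eqn:UB}, and then the passage $\epsilon\to 0$ to the upscaled system \eqref{eqn:qs}--\eqref{eqn:M12S}. A useful structural observation is that the unsteady Stokes system \eqref{eqn:vrw} is decoupled from the chemistry, so I would treat $\textbf{q}_\epsilon$ separately: existence, uniqueness and the energy bounds on $\|\textbf{q}_\epsilon\|_{L^\infty(S;\textbf{L}^2(\Omega_\epsilon^p))}$, $\sqrt{\mu}\epsilon\|\nabla\textbf{q}_\epsilon\|$ and $\|\partial_t\textbf{q}_\epsilon\|$ follow from a standard Galerkin argument on the divergence-free space $\textbf{H}^{1,2}_{div}(\Omega_\epsilon^p)$ by testing with $\textbf{q}_\epsilon$ and with $\partial_t\textbf{q}_\epsilon$; the $\epsilon^2$ weighting of the viscosity is exactly what keeps the gradient bound of the stated order.

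For the reactive part I would first establish well-posedness of the regularized problem $(\mathcal{P}_{\epsilon\delta})$, with $\psi$ replaced by the Lipschitz approximation $\psi_\delta$ of \eqref{eqn:MDR}, by Rothe's method (time discretization, solving the resulting elliptic problems and the explicit surface ODE at each step, then passing to the limit in the step size), and then remove the regularization. Next I would collect $\delta$- and $\epsilon$-uniform a priori estimates: testing \eqref{eqn:M1RWF}--\eqref{eqn:M2RWF} with $u_{\epsilon\delta}, v_{\epsilon\delta}$ and using the coercivity in (A6), the boundedness of the Langmuir rate ($0\le R\le k/4$, since $\tfrac{k_1u\,k_2v}{(1+k_1u+k_2v)^2}\le\tfrac14$) together with $0\le\psi_\delta\le1$, the $\epsilon$-scaled trace inequality on $\Gamma_\epsilon^*$, and Gronwall, yields the bulk bounds; integrating \eqref{eqn:M12RWF} gives $\|w_{\epsilon\delta}\|_{L^\infty}$ and $\|\partial_t w_{\epsilon\delta}\|$ directly from the boundedness of $R$ and $\psi_\delta$. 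Positivity of $u_\epsilon,v_\epsilon,w_\epsilon$ I would obtain by testing with the negative parts $(u_{\epsilon\delta})_-,(v_{\epsilon\delta})_-$ and $(w_{\epsilon\delta})_-$, using (A1), (A2) and $\psi_\delta(s)=0$ for $s<0$. To pass $\delta\to0$ I would use that $z_{\epsilon\delta}:=\psi_\delta(w_{\epsilon\delta})$ is bounded in $L^\infty((\Gamma_\epsilon^*)^T)$, hence converges weakly-$*$ to some $z_\epsilon\in[0,1]$, while $w_{\epsilon\delta}\to w_\epsilon$ strongly (Aubin--Lions, via the uniform $\partial_t w$ bound); the maximal monotonicity of the graph $\psi$ then gives $z_\epsilon\in\psi(w_\epsilon)$ by a Minty-type argument. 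Uniqueness follows by subtracting two solutions, testing with the differences, and absorbing the reaction terms via the Lipschitz bound \eqref{eqn:LL} and the monotonicity inequality $(z^1-z^2)(w^1-w^2)\ge0$, closing with Gronwall.

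For the homogenization step I would invoke \eqref{eqn:UB} to extract two-scale limits: $u_\epsilon\overset{2}{\rightharpoonup}u(t,x)$, $\nabla u_\epsilon\overset{2}{\rightharpoonup}\nabla_x u+\nabla_y u_1$, analogously for $v_\epsilon$, while $w_\epsilon,z_\epsilon$ two-scale converge on $\Gamma_\epsilon^*$ to functions on $\Omega\times\Gamma$. For the Stokes system the $\epsilon^2$ scaling produces the two-pressure homogenized system \eqref{eqn:qs} by testing \eqref{eqn:vrw} with $\psi(t,x)+\epsilon\psi_1(t,x,x/\epsilon)$. For the transport equations I would use oscillating test functions $\phi(t,x)+\epsilon\phi_1(t,x,x/\epsilon)$; choosing $\phi=0$ isolates the cell equations \eqref{eqn:cp} whose correctors $k_j$ build the effective matrices $A,B$, and choosing $\phi_1=0$ produces the macroscopic balances \eqref{eqn:M1S}--\eqref{eqn:M2S}, the scaled surface term $\epsilon\,\partial_t w_\epsilon$ converging to $P(t,x)=\int_\Gamma|Y^p|^{-1}\partial_t w\,d\sigma_y$ and the inflow/outflow data acquiring the factor $|Y^p|^{-1}$. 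Passing to the limit in the drift $\textbf{q}_\epsilon u_\epsilon$ and the nonlinearity $R(u_\epsilon,v_\epsilon)$ requires strong convergence, which I would secure from Aubin--Lions compactness of $(u_\epsilon),(v_\epsilon)$ and a strong two-scale trace argument on $\Gamma_\epsilon^*$. The inclusion $z\in\psi(w)$ on $\Omega\times\Gamma$ is recovered, as in the $\delta$-limit, from the monotonicity of $\psi$ together with strong two-scale convergence of $w_\epsilon$; uniqueness of the homogenized solution follows by the same energy/Gronwall computation.

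The main obstacle I anticipate is twofold, and both difficulties sit on the oscillating interface $\Gamma_\epsilon^*$. First, passing to the limit in the nonlinear Langmuir term $R(u_\epsilon,v_\epsilon)$ restricted to $\Gamma_\epsilon^*$ cannot be done by weak two-scale convergence alone; it needs \emph{strong} two-scale convergence of the traces of $u_\epsilon,v_\epsilon$, which rests on the uniform time-derivative bounds and a trace/compactness estimate stable as $|\Gamma_\epsilon^*|\to\infty$. Second, preserving the multivalued inclusion $z\in\psi(w)$ through the two-scale limit (and, in the reversed order, through $\delta\to0$) is delicate because $\psi$ is discontinuous; the monotonicity of the graph is the essential tool, and one must ensure that the surface pairing $\langle z_\epsilon,w_\epsilon\rangle$ converges to $\int_{\Omega\times\Gamma}z\,w$, which is precisely where the strong surface convergence of $w_\epsilon$ enters.
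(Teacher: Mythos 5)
Your proposal follows essentially the same route as the paper: Galerkin for the decoupled Stokes system, Lipschitz regularization $\psi_\delta$ plus Rothe's method for the reactive part, $\delta$- and $\epsilon$-uniform energy estimates via the bound $R\le \frac{k}{4}$ and the $\epsilon$-scaled trace inequality, extension to $\Omega$, and then two-scale convergence with oscillating test functions $\phi+\epsilon\phi_1$ and the boundary unfolding operator to handle the interface terms, the drift $\textbf{q}_\epsilon u_\epsilon$ and the Langmuir rate. The only technical divergence is in recovering $z\in\psi(w)$ after the $\epsilon$-limit: you propose a Minty-type monotonicity argument, while the paper argues pointwise from the strong convergence of $\mathcal{T}^b_\epsilon(w_\epsilon)$ by splitting into the cases $w>0$ (where eventually $\mathcal{T}^b_\epsilon(w_\epsilon)>\frac{w}{2}$ forces $z_\epsilon=1$) and $w=0$ (where $\partial_t w_\epsilon\rightharpoonup 0$ forces $z=R(u,v)\in[0,1]$); both are valid, and for the $\delta\to 0$ step the paper defers to the van Duijn--Pop argument, which is precisely the monotonicity reasoning you describe.
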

	\begin{thm}\label{thm:2}(Alternative limit process)
		Let $(\textbf{q}_\epsilon), (u_{\epsilon\delta}), (v_{\epsilon\delta}), (w_{\epsilon\delta})$ be the sequence of solutions to the system $\eqref{eqn:vrw}-\eqref{eqn:M12RWF}$. The sequences $(\textbf{q}_\epsilon), (u_{\epsilon\delta}), (v_{\epsilon\delta}) $ and $(w_{\epsilon\delta})$ converge in the sense of Lemma \ref{thm:TS2} and Lemma \ref{thm:TS3} such that
		\begin{align*}
			&\textbf{q}_\epsilon \overset{2}{\rightharpoonup} \textbf{q}, \quad \epsilon \nabla_x \textbf{q}_\epsilon \overset{2}{\rightharpoonup} \nabla_y \textbf{q}, \quad u_{\epsilon\delta} \overset{2}{\rightharpoonup} u_\delta,  \quad  \nabla 	u_{\epsilon\delta} \overset{2}{\rightharpoonup} \nabla u_\delta+\nabla_y u_{\delta1},\\
			& v_{\epsilon\delta} \overset{2}{\rightharpoonup} v_\delta, \quad \nabla 	v_{\epsilon\delta} \overset{2}{\rightharpoonup} \nabla v_\delta+\nabla_y v_{\delta1}, \quad w_{\epsilon\delta} \overset{2}{\rightharpoonup} w_\delta,
		\end{align*}
		where $u_{\delta1},v_{\delta1}\in L^2(S\times\Omega ; H^{1,2}_{per}(Y)/\mathbb{R}))$ are explicitly given by
		\begin{align}\label{eqn:NE7}
			u_{\delta1}=\sum_{j=1}^{n} \frac{\partial u_\delta}{\partial x_j}k_j(t,x,y)+k_0(t, x, y)u_\delta+p(x), \; v_{\delta1}=\sum_{j=1}^{n} \frac{\partial v_\delta}{\partial x_j}k_j(t,x,y)+k_0(t, x, y)v_\delta+q(x)
		\end{align}
		and $k_j\in L^\infty(S\times\Omega ; H^{1,2}_{per}(Y))$, for $j=1,2,\cdots,n$ are the solutions of the cell problems $\eqref{eqn:cp}$. The two-scale limit $(\textbf{q}(t, x, y), u_\delta(t,x), v_\delta(t,x)$, $w_\delta(t,x,y))\in (L^2(S\times\Omega; H^{1,2}_{per}(Y))^n\times[H^{1,2}(S ; H^{1,2}(\Omega)^*) \cap L^2(S ; H^{1,2}(\Omega))]^2 \times H^{1,2}(S\times\Omega\times\Gamma)$ is the solution of the macroscopic system of equations 
		\begin{align}\label{eqn:qs1}
			\begin{split}
				\frac{\partial \textbf{q}}{\partial t}-\mu \Delta_y \textbf{q}&=-\nabla_yp_1(t, x, y)-\nabla_x p(t, x)\quad \text{in}\quad S\times\Omega\times Y^p,\\
				\nabla_y.\textbf{q}&=0 \quad \text{in} \quad S\times\Omega\times Y^p,\\
				\nabla_x.\bar{\textbf{q}}&=0 \quad \text{in}\quad S\times\Omega,\\
				\textbf{q}(t, x, y)&=0 \quad\text{in} \quad S\times\Omega\times\Gamma,\\
				\bar{\textbf{q}}.\vec{n}&=0 \quad\text{in}\quad S\times\partial\Omega,\\
				\textbf{q}(0,x) &=\textbf{q}_0(x)\quad\text{in}\quad\Omega,
			\end{split}			
		\end{align} 
		\begin{align}\label{eqn:M1S2}
			\begin{split}
				\frac{\partial u_\delta}{\partial {t}} - \nabla.(A\nabla u_\delta-(\bar{\textbf{q}}-\tilde{\textbf{q}}_0)u_\delta)+P_\delta(t,x)&=0 \; \text{ in } S\times \Omega,	\\
				-(A\nabla u_\delta-(\bar{\textbf{q}}-\tilde{\textbf{q}}_0)u_\delta).\vec{n}&=\frac{d}{|Y^p|}\; \text{ on } S\times \partial\Omega_{in},\\
				-(A\nabla u_\delta-(\bar{\textbf{q}}-\tilde{\textbf{q}}_0)u_\delta).\vec{n}&=\frac{e}{|Y^p|}\; \text{ on } S\times \partial\Omega_{out},\\
				u_\delta(0,x)&=u_0(x)\;\text{ in }\Omega, 
			\end{split}
		\end{align} \vspace{-0.65cm}
		\begin{align}\label{eqn:M2S2}
			\begin{split}
				\frac{\partial v_\delta}{\partial {t}} - \nabla.(B\nabla v_\delta-(\bar{\textbf{q}}-\tilde{\textbf{q}}_0)v_\delta)+P_\delta(t,x)&=0 \; \text{ in } S\times \Omega,	\\
				-(B\nabla v_\delta-(\bar{\textbf{q}}-\tilde{\textbf{q}}_0)v_\delta).\vec{n}&=\frac{g}{|Y^p|} \;\text{ on } S\times \partial\Omega_{in},\\
				-(B\nabla v_\delta-(\bar{\textbf{q}}-\tilde{\textbf{q}}_0)v_\delta).\vec{n}&=\frac{h}{|Y^p|}\; \text{ on } S\times \partial\Omega_{out},\\
				v_\delta(0,x)&=v_0(x)\; \text{ in }\Omega,
			\end{split}
		\end{align}\vspace{-0.65cm}
		\begin{align}\label{eqn:M12S2}
			\begin{split}
				\frac{\partial w_\delta}{\partial  {t}}&=k_d(R(u_\delta,v_\delta)-\psi_\delta(w_\delta)) \; \text{ in } S\times\Omega\times \Gamma, \\ 
				w_\delta(0,x)&=w_0(x)\; \text{ on } \Omega\times\Gamma,	
			\end{split} 
		\end{align}
		which satisfies the estimate
		\begin{align*}
			\|u_\delta\|_{L^\infty(S; L^2(\Omega))}&+\|u_\delta\|_{(\Omega)^T}+\|\nabla u_\delta\|_{(\Omega)^T}+\left\lVert \frac{\partial u_\delta}{\partial t}\right\rVert_{L^2(S;H^{1,2}(\Omega)^*)}\notag\\
			+\|v_\delta\|&_{L^\infty(S; L^2(\Omega))}+\|v_\delta\|_{(\Omega)^T}+\|\nabla v_\delta\|_{(\Omega)^T}
			+\left\lVert \frac{\partial v_\delta}{\partial t}\right\rVert_{L^2(S;H^{1,2}(\Omega)^*)}\notag\\
			&+\|w_\delta\|_{(\Omega\times\Gamma)^T}+\left\lVert\frac{\partial w_\delta}{\partial t}\right\rVert_{(\Omega\times\Gamma)^T}+\|P_\delta\|_{(\Omega)^T}\le C,
		\end{align*}
		where $C$ is the generic constant independent of $\delta$ and $P_\delta(t,x)=\int_{\Gamma}\frac{1}{|Y^p|}\frac{\partial w_\delta}{\partial t}\;d\sigma_y$, with the elements of the elliptic and bounded homogenized matrix $A $ and $ B$ are given by
		\begin{align*}
			a_{ij}(t,x)=\int_{Y^p}\frac{D_1(x,y)}{|Y^p|}\left(\delta_{ij}+\sum_{i,j=1}^{n}\frac{\partial k_j}{\partial y_i}\right)dy, \quad b_{ij}(t,x)=\int_{Y^p}\frac{D_2(x,y)}{|Y^p|}\left(\delta_{ij}+\sum_{i,j=1}^{n}\frac{\partial k_j}{\partial y_i}\right)dy.
		\end{align*}
		Furthermore,  the sequences $(\textbf{q}, u_\delta, v_\delta, w_\delta, z_\delta )$ converge to a unique weak solution $(\textbf{q}, u, v, w, z)\in(L^2(S\times\Omega; H^{1,2}_{per}(Y))^n\times [H^{1,2}(S ; $ $H^{1,2}(\Omega)^*) \cap L^2(S ; H^{1,2}(\Omega))]^2\times H^{1,2}(S\times\Omega\times\Gamma)\times L^\infty(S\times\Omega\times\Gamma)$ of the problem $\eqref{eqn:qs}-\eqref{eqn:M12S}$.
	\end{thm}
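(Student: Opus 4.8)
The plan is to carry out the two limits in the order opposite to Theorem~\ref{thm:1}: first send $\epsilon\to 0$ in the regularized system \eqref{eqn:vrw}--\eqref{eqn:M12RWF} with $\delta>0$ held fixed, obtaining the homogenized-but-still-regularized system \eqref{eqn:qs1}--\eqref{eqn:M12S2}, and then send $\delta\to 0$ to recover \eqref{eqn:qs}--\eqref{eqn:M12S}. The decisive structural advantage of this ordering is that, while $\delta$ is kept fixed, every nonlinearity in the problem is (locally) Lipschitz: the reaction rate $R$ satisfies \eqref{eqn:LL} and the regularized dissolution rate $\psi_\delta$ is Lipschitz with constant $1/\delta$. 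Hence the obstruction described in the introduction---that the multivalued term \eqref{1.15} cannot be expanded asymptotically---is absent here, and the passage $\epsilon\to 0$ is a genuine (if technical) homogenization rather than a limit of graphs.

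For the first limit I would start from the $\epsilon$-uniform a priori bounds produced by the analysis of $(\mathcal{P}_{\epsilon\delta})$ in Section~3 and invoke two-scale compactness (Lemma~\ref{thm:TS2} for the bulk quantities and Lemma~\ref{thm:TS3} for the traces on $\Gamma_\epsilon^*$) to extract, along a subsequence, the two-scale limits displayed in the statement together with the correctors $u_{\delta1},v_{\delta1}\in L^2(S\times\Omega;H^{1,2}_{per}(Y)/\mathbb{R})$. Since the Stokes subsystem \eqref{eqn:vrw} is decoupled from the concentrations, its homogenization can be treated independently by the standard Allaire-type argument, yielding the two-pressure system \eqref{eqn:qs1}; note that the homogenized velocity $\textbf{q}$ then enters the concentration cell problems \eqref{eqn:cp} through the drift term, so the Stokes limit must be fixed before the corrector equations are solved. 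For the concentrations I would insert oscillating test functions $\phi(t,x)+\epsilon\phi_1(t,x,x/\epsilon)$ into \eqref{eqn:M1RWF}--\eqref{eqn:M2RWF}, pass to the two-scale limit, and split the resulting identity into a $y$-problem---which determines $u_{\delta1}$ via \eqref{eqn:cp} and gives the corrector formula \eqref{eqn:NE7}---and a macroscopic $x$-problem with the homogenized matrices $A,B$. The advection term $\textbf{q}_\epsilon u_{\epsilon\delta}$ passes to the limit by combining the two-scale convergence of $\textbf{q}_\epsilon$ with the strong $L^2$ convergence of $u_{\epsilon\delta}$ (obtained after extension to all of $\Omega$ by the modified extension operator, using the time-derivative bound and an Aubin--Lions argument), and the surface term $\langle \partial_t w_{\epsilon\delta},\phi\rangle_{(\Gamma_\epsilon^*)^T}$ converges, through Lemma~\ref{thm:TS3}, to the averaged source $P_\delta(t,x)=\int_\Gamma |Y^p|^{-1}\partial_t w_\delta\,d\sigma_y$. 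Because $R$ and $\psi_\delta$ are continuous and the concentrations converge a.e., $R(u_{\epsilon\delta},v_{\epsilon\delta})\to R(u_\delta,v_\delta)$ and $\psi_\delta(w_{\epsilon\delta})\to\psi_\delta(w_\delta)$, so \eqref{eqn:M12RWF} passes directly to \eqref{eqn:M12S2}. Testing the limit system then upgrades the $\epsilon$-uniform bounds into the $\delta$-uniform estimate stated in the theorem.

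The heart of the proof is the second limit $\delta\to 0$, and this is where I expect the main obstacle. Set $z_\delta:=\psi_\delta(w_\delta)$; the $\delta$-uniform estimates give $u_\delta\rightharpoonup u$, $v_\delta\rightharpoonup v$, $w_\delta\rightharpoonup w$ in the natural energy spaces and $z_\delta\overset{\ast}{\rightharpoonup} z$ in $L^\infty(S\times\Omega\times\Gamma)$ with $0\le z\le 1$. Passing to the limit in the linear and Lipschitz terms is routine, but identifying the weak limit $z$ as an element of the maximal monotone graph $\psi(w)$ of \eqref{1.15} is not: one cannot pass to the limit inside a discontinuous multifunction. The plan is first to promote the convergence of $w_\delta$ to strong $L^2$ (equivalently a.e.) convergence---the bound on $\partial_t w_\delta$ in $(\Omega\times\Gamma)^T$ together with an Aubin--Lions--Simon compactness on the surface provides this---and then to run a Minty-type monotonicity argument: using that $\psi_\delta$ is monotone and that the pairing $\langle z_\delta-\xi,\,w_\delta-w\rangle$ can be controlled for every measurable selection $\xi$ of $\psi$, one shows $\limsup_\delta\langle z_\delta,w_\delta\rangle\le\langle z,w\rangle$, whence monotonicity of the graph forces $z(t,x,y)\in\psi(w(t,x,y))$ a.e. This yields the inclusion in \eqref{eqn:M12S} and completes the derivation of \eqref{eqn:qs}--\eqref{eqn:M12S}.

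Finally, for uniqueness I would take two solutions of the homogenized problem, subtract the weak formulations, and test the differences of the $u$-, $v$- and $w$-equations against $u^1-u^2$, $v^1-v^2$ and $w^1-w^2$ respectively. Ellipticity of $A$ and $B$ controls the diffusion, the Lipschitz bound \eqref{eqn:LL} on $R$ handles the reaction, and the monotonicity inequality $\langle z^1-z^2,\,w^1-w^2\rangle\ge 0$ eliminates the multivalued term with a favourable sign; a Gr\"onwall argument then gives uniqueness. Positivity follows from testing with the negative parts using \textbf{A1.}--\textbf{A2.}, and the coincidence of this outcome with \eqref{eqn:qs}--\eqref{eqn:M12S} of Theorem~\ref{thm:1} is a consequence of the uniqueness just established, confirming that the order of the two limits is immaterial.
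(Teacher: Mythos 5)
Your overall architecture coincides with the paper's: first pass $\epsilon\to 0$ at fixed $\delta$ using the $\epsilon$-, $\delta$-uniform bounds of Lemma \ref{lemma:M1LB}, the extension estimate \eqref{eqn:NE3}, two-scale compactness (Lemmas \ref{thm:TS2}, \ref{thm:TS3}) and oscillating test functions $\phi+\epsilon\psi(\cdot,\cdot/\epsilon)$ to split off the cell problems \eqref{eqn:cp} and the macroscopic system \eqref{eqn:qs1}--\eqref{eqn:M12S2}; then derive $\delta$-uniform estimates by testing the limit system; then pass $\delta\to 0$; then prove uniqueness by subtracting two solutions, testing with the differences and applying Gronwall. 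All of that matches Section 5.

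The genuine gap is your treatment of the surface concentration. In the $\epsilon\to 0$ step you assert that $w_{\epsilon\delta}$ "converges a.e." so that $\psi_\delta(w_{\epsilon\delta})\to\psi_\delta(w_\delta)$, and in the $\delta\to 0$ step you propose to upgrade $w_\delta\rightharpoonup w$ to strong $L^2$ convergence by "Aubin--Lions--Simon compactness on the surface". Neither is available as stated: $w_{\epsilon\delta}$ (resp.\ $w_\delta$) solves only a pointwise ODE in $t$ on $\Gamma_\epsilon^*$ (resp.\ $\Omega\times\Gamma$), so there is no bound on spatial derivatives and hence no compact embedding $B_0\hookrightarrow\hookrightarrow B$ to feed into Aubin--Lions; the bound on $\partial_t w_\delta$ alone gives nothing. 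The paper closes this for the $\epsilon$-limit in Lemma \ref{lemma:R2S1}(ii): one unfolds the ODE with $\mathcal{T}^b_\epsilon$ and shows that $\{\mathcal{T}^b_{\epsilon}(w_{\epsilon\delta})\}$ is a \emph{Cauchy sequence} in $L^2(S\times\Omega\times\Gamma)$, by a Gronwall estimate on $\mathcal{T}^b_{\epsilon_\mu}w_{\epsilon_\mu\delta}-\mathcal{T}^b_{\epsilon_m}w_{\epsilon_m\delta}$ that uses the monotonicity and Lipschitz continuity of $\psi_\delta$ together with the strong convergence of the unfolded reaction terms; the limit is then identified with $w_\delta$ via Lemma \ref{thm:PU1}. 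Some substitute for this argument is indispensable, because both your Minty identification and the paper's route to $z\in\psi(w)$ require strong (or a.e.) convergence of the surface concentration.

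On the identification $z\in\psi(w)$ itself, your Minty-type argument is a legitimate alternative to the paper, which simply invokes Theorem 2.21 of van Duijn--Pop (a pointwise dichotomy: on $\{w>0\}$ a.e.\ convergence forces $w_\delta>\delta$ eventually, hence $z_\delta=1$ and $z=1$; on $\{w=0\}$ the inclusion $z\in[0,1]=\psi(0)$ is automatic from the weak-$*$ bound). Note one wrinkle in your version: since the approximating graphs $\psi_\delta$ of \eqref{eqn:MDR} vary with $\delta$, the monotonicity inequality $\langle z_\delta-\psi_\delta(a),w_\delta-a\rangle\ge 0$ only passes to the limit for the particular selection $b=\lim_\delta\psi_\delta(a)$ (e.g.\ $b=0$ at $a=0$), not for every $b\in\psi(a)$; maximality of $\psi$ therefore cannot be invoked verbatim, and you must fall back on the same dichotomy --- which, again, is harmless here only because $0\le z\le 1$ is already known. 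The uniqueness argument you sketch is essentially the paper's, and your observation that uniqueness of \eqref{eqn:qs}--\eqref{eqn:M12S} is what makes the two iterated limits agree is exactly the intended conclusion.
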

	% \subsection{Existence and Uniqueness of solution of the regularized problem}
	
	\subsection{A-priori estimates of the problem $(\mathcal{P}_{\epsilon\delta})$}
	\begin{lemma}\label{lemma:M1LB}
		The fluid velocity $\textbf{q}_\epsilon$ and concentrations $u_{\epsilon\delta}$, $v_{\epsilon\delta}$ and $w_{\epsilon\delta}$ satisfy 
		
		\begin{enumerate}
			\item[\textrm{i.}] $\|\textbf{q}_\epsilon\|_{L^\infty(S; \textbf{L}^2(\Omega_\epsilon^p))}\le M_q, \sqrt{\mu}\epsilon \|\nabla\textbf{q}_\epsilon\|_{(L^2(S\times\Omega_\epsilon^p))^{n\times n}}\le \bar{M}_q, \|\textbf{q}_\epsilon\|_{L^\infty(S; \textbf{L}^4(\Omega_\epsilon^p))}\le C$ and $ \left\lVert\frac{\partial \textbf{q}_\epsilon}{\partial t}\right\rVert_{L^2(S; \textbf{H}^{-1,2}_{div}(\Omega_\epsilon^p))}\le M_q^*$ a.e. in $S\times\Omega_\epsilon^p$.
			
			\item[\textrm{ii.}] $\|u_{\epsilon\delta}\|_{(\Omega^p_\epsilon)^T}\leq M_u $ and $\|v_{\epsilon\delta}\|_{(\Omega^p_\epsilon)^T}\leq M_v $ a.e. in $S\times\Omega^p_\epsilon,$ 
			
			\item[\textrm{iii.}] $\|\nabla u_{\epsilon\delta}\|_{(\Omega^p_\epsilon)^T}\leq \bar{M}_u$ and $\|\nabla v_{\epsilon\delta}\|_{(\Omega^p_\epsilon)^T}\le \bar{M}_v$ a.e. in $S\times\Omega^p_\epsilon,$
			
			\item[\textrm{iv.}] $\|u_{\epsilon\delta}\|_{L^\infty(S; L^4(\Omega_\epsilon^p))}\le C, \|v_{\epsilon\delta}\|_{L^\infty(S; L^4(\Omega_\epsilon^p))}\le C$ \text{ a.e. in } $S\times\Omega_\epsilon^p$.

			\item[\text{v.}] $\left\lVert\frac{\partial u_{\epsilon\delta}}{\partial t}\right\rVert_{L^2(S;H^{1,2}(\Omega^p_\epsilon)^*)}\leq M_u^*$ and $\left\lVert\frac{\partial v_{\epsilon\delta}}{\partial t}\right\rVert_{L^2(S;H^{1,2}(\Omega^p_\epsilon)^*)}\leq M_v^*$,
			
			\item[\text{vi.}] $\|w_{\epsilon\delta}\|_{(\Gamma^*_\epsilon)^T}\leq M_w$ and $\left\lVert\frac{\partial w_{\epsilon
					\delta}}{\partial t}\right\rVert_{(\Gamma^*_\epsilon)^T}\le M_w^*$  a.e. on $S\times\Gamma^*_\epsilon,$
			
		\end{enumerate}
		where the constants $M_q$, $M_u$, $M_v$, $M_w$, $\bar{M}_q$, $\bar{M}_u$, $\bar{M}_v$, $C$, $M_q^*$, $M_u^*$, $M_v^*$ and $M_w^*$ are independent of $\epsilon$ and $\delta$.
		Moreover, there exists a pressure $p_\epsilon=\partial_tP_\epsilon$ which satisfy the equation \eqref{eqn:v1} in the distributional sense. The pressure $P_\epsilon\in L^\infty(S; L^2_0(\Omega_\epsilon^p))$ satisfies
		\begin{align}\label{eqn:pe}
			\sup_{t\in S}\|\nabla P_\epsilon(t)\|_{H^{-1, 2}(\Omega_\epsilon^p)^n}\le C \text{ for all } \epsilon>0,
		\end{align}
		where $C$ is the constant independent of $\epsilon$.
	\end{lemma}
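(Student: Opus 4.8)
The plan is to obtain every bound by the energy method, i.e. by testing each weak equation with the solution itself (or a suitable power of it), exploiting three structural features: the velocity equation is decoupled from the chemistry, both the Langmuir rate $R$ and the regularized dissolution term $\psi_\delta$ are uniformly bounded, and the convective term is skew-symmetric so that it disappears against the solution. First I would treat \eqref{eqn:vrw}, which does not see the concentrations. Testing with $\psi=\textbf{q}_\epsilon$ and using $\nabla\cdot\textbf{q}_\epsilon=0$ gives the energy identity $\tfrac12\frac{d}{dt}\|\textbf{q}_\epsilon\|_{\textbf{L}^2(\Omega_\epsilon^p)}^2+\epsilon^2\mu\|\nabla\textbf{q}_\epsilon\|_{(\Omega_\epsilon^p)}^2=0$; integrating in time against $\textbf{q}_0\in\textbf{L}^2(\Omega)$ from (A4) yields at once the $L^\infty(S;\textbf{L}^2)$ bound $M_q$ and the scaled dissipation bound $\bar M_q$ for $\sqrt\mu\,\epsilon\,\nabla\textbf{q}_\epsilon$. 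The dual bound follows from the equation: since the pressure annihilates divergence-free test fields, $\langle\partial_t\textbf{q}_\epsilon,\psi\rangle=-\epsilon^2\mu\langle\nabla\textbf{q}_\epsilon,\nabla\psi\rangle$, and taking the supremum over $\|\psi\|\le1$ gives $\|\partial_t\textbf{q}_\epsilon\|_{\textbf{H}^{-1,2}_{div}}\le\epsilon\sqrt\mu\,\bar M_q\le M_q^*$. The $L^\infty(S;\textbf{L}^4)$ bound is the first $\epsilon$-sensitive estimate and I would read it off the energy bound via a Gagliardo--Nirenberg/Sobolev inequality with a constant uniform on $\Omega_\epsilon^p$.

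Next I would bound $w_{\epsilon\delta}$, which is both the cheapest estimate and the device that decouples the system. Since $0\le R\le k/4$ and $\psi_\delta\in[0,1]$ by \eqref{eqn:MDR}, the right-hand side of \eqref{eqn:M12RWF} is pointwise bounded, so $|\partial_t w_{\epsilon\delta}|\le k_d\max\{k/4,1\}$ uniformly in $\epsilon,\delta$; together with $\epsilon|\Gamma_\epsilon^*|\to|\Gamma|\,|\Omega|/|Y|$ this delivers $M_w^*$, and integrating in time against $w_0\in L^\infty$ (A4) gives the $L^\infty$, hence $(\Gamma_\epsilon^*)^T$, bound $M_w$. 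With $\partial_t w_{\epsilon\delta}$ thus controlled I would test \eqref{eqn:M1RWF} with $\phi=u_{\epsilon\delta}$ (and \eqref{eqn:M2RWF} with $\theta=v_{\epsilon\delta}$). The diffusion term is coercive, $\langle\bar D_1^\epsilon\nabla u_{\epsilon\delta},\nabla u_{\epsilon\delta}\rangle\ge\alpha\|\nabla u_{\epsilon\delta}\|^2$ by (A6), while the convective term vanishes because $\textbf{q}_\epsilon$ is divergence free and vanishes on all of $\partial\Omega_\epsilon^p$: $\langle\textbf{q}_\epsilon u_{\epsilon\delta},\nabla u_{\epsilon\delta}\rangle=\tfrac12\int_{\Omega_\epsilon^p}\textbf{q}_\epsilon\cdot\nabla(u_{\epsilon\delta}^2)=0$. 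The surface term $\langle\partial_t w_{\epsilon\delta},u_{\epsilon\delta}\rangle_{(\Gamma_\epsilon^*)^T}$ and the inflow/outflow data from (A3) are estimated by the $\epsilon$-scaled trace inequality $\epsilon\|\phi\|_{L^2(\Gamma_\epsilon^*)}^2\le C(\|\phi\|_{L^2(\Omega_\epsilon^p)}^2+\epsilon^2\|\nabla\phi\|_{L^2(\Omega_\epsilon^p)}^2)$ and Young's inequality, the gradient contributions being absorbed into the coercive term. Grönwall's inequality then yields the $L^\infty(S;L^2)$ and $L^2(S;H^{1,2})$ bounds $M_u,M_v,\bar M_u,\bar M_v$ of items ii and iii.

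The $L^\infty(S;L^4)$ bounds of item iv I would get by testing with $\phi=u_{\epsilon\delta}|u_{\epsilon\delta}|^2$: convection again drops out by the same skew-symmetry, the diffusion term dominates $\|\nabla u_{\epsilon\delta}^2\|^2$, and the boundary contributions are controlled by the scaled trace inequality together with the $w$- and data-bounds already in hand. The dual estimates on the time derivatives (item v) follow by reading \eqref{eqn:M1RWF} as an identity for $\langle\partial_t u_{\epsilon\delta},\phi\rangle$ and bounding each term against $\|\phi\|_{H^{1,2}(\Omega_\epsilon^p)}$: the diffusion term by item iii, the convective term $\langle\textbf{q}_\epsilon u_{\epsilon\delta},\nabla\phi\rangle$ by Hölder using the two $L^4$ bounds of items i and iv, and the surface/data terms by the scaled trace; taking the dual norm and integrating in time gives $M_u^*,M_v^*$. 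Finally, for the pressure I would invoke the de Rham/Bogovski\u i construction: the functional $\psi\mapsto\langle\partial_t\textbf{q}_\epsilon,\psi\rangle+\epsilon^2\mu\langle\nabla\textbf{q}_\epsilon,\nabla\psi\rangle$ vanishes on divergence-free fields, so there is $P_\epsilon\in L^\infty(S;L^2_0(\Omega_\epsilon^p))$ with $p_\epsilon=\partial_t P_\epsilon$ solving \eqref{eqn:v1} in the distributional sense, and \eqref{eqn:pe} follows from the inf-sup (Ne\v cas) inequality.

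The main obstacle is not any single computation but the $\epsilon$-uniformity of the auxiliary constants on the perforated domain: one needs the scaled trace inequality on $\Gamma_\epsilon^*$, a Gagliardo--Nirenberg/Sobolev constant uniform on $\Omega_\epsilon^p$ for the $L^4$ bounds, and, most delicately, an inf-sup/Bogovski\u i constant uniform in $\epsilon$ on the pore space so that the pressure estimate \eqref{eqn:pe} is genuinely independent of $\epsilon$. I would therefore rely on the uniform trace and restriction-operator tools collected in Section 3, and treat the decoupled boundedness of $\partial_t w_{\epsilon\delta}$ as the structural fact that prevents the coupling from degrading any constant.
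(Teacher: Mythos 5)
Your proposal is correct and follows the paper's strategy almost step for step: energy identity for the decoupled Stokes system, duality for $\partial_t\textbf{q}_\epsilon$, testing the transport equations with the solution while killing the convection term by skew-symmetry, the $\epsilon$-scaled trace inequality plus Young and Gr\"onwall for items ii--iii and v--vi, and a de Rham--type construction (the paper cites Proposition III.1.1 of Temam) for the pressure, with \eqref{eqn:pe} read off directly from the resulting identity. The one place you genuinely diverge is item iv: the paper obtains $\|u_{\epsilon\delta}\|_{L^\infty(S;L^4(\Omega_\epsilon^p))}\le C$ by ``following the same line'' as the Gagliardo--Nirenberg--Sobolev argument \eqref{eqn:4e}, i.e.\ by embedding $H^{1,2}(\Omega_\epsilon^p)\hookrightarrow L^4(\Omega_\epsilon^p)$ cell by cell, whereas you propose the $L^4$ energy method with test function $u_{\epsilon\delta}|u_{\epsilon\delta}|^2$. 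Your route is more work (the boundary and surface terms need to be re-estimated at the $L^4$ level) but is arguably the more robust one, since the paper's available gradient bound is only $L^2$ in time, so the embedding by itself delivers $L^2(S;L^4)$ rather than $L^\infty(S;L^4)$; an $L^4$ energy estimate closes that gap honestly. Two smaller remarks: your pointwise ODE bound $|\partial_t w_{\epsilon\delta}|\le k_d\max\{k/4,1\}$ is a cleaner way to get vi than the paper's Gr\"onwall argument and is fully equivalent; and for \eqref{eqn:pe} no inf-sup constant is actually needed at this stage, since the estimate is on $\nabla P_\epsilon$ in $H^{-1,2}(\Omega_\epsilon^p)^n$ and follows from bounding the right-hand side of the defining identity --- the $\epsilon$-uniform Bogovski\u{\i}/restriction machinery only enters later, when the pressure is extended to all of $\Omega$ in Lemma \ref{lemma:E1}.
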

	
	\begin{proof}
		 We insert $\psi=\chi_{(0,t)}\textbf{q}_\epsilon\in L^2(S;\textbf{H}^{1,2}_{div}(\Omega_\epsilon^p))$ in \eqref{eqn:vrw} and deduce that
		\begin{align*}
			\|\textbf{q}_\epsilon(t)\|_{\textbf{L}^2(\Omega_\epsilon^p)}+2\epsilon^2\mu\|\nabla \textbf{q}_\epsilon\|_{(L^2(S\times\Omega_\epsilon^p))^{n\times n}}\le 	\|\textbf{q}_0\|_{\textbf{L}^2(\Omega_\epsilon^p)}.
		\end{align*}
		So, we have $\|\textbf{q}_\epsilon\|_{L^\infty(S; \textbf{L}^2(\Omega_\epsilon^p))}\le M_q$ and $\sqrt{\mu}\epsilon \|\nabla\textbf{q}_\epsilon\|_{(L^2(S\times\Omega_\epsilon^p))^{n\times n}}\le \bar{M}_q$. 
		We obtain $\|\textbf{q}_{\epsilon}\|_{L^4(Y)}\le C\|\textbf{q}_{\epsilon\delta}\|_{H^{1, 2}(Y)}$ by Gagliardo$-$Nirenberg$-$Sobolev inequality, where the constant $C$ depends on $n$ and $Y$. We now use a simple scaling argument and derive
		\begin{align}\label{eqn:4e}
			\|\textbf{q}_{\epsilon}\|_{L^4(\Omega_\epsilon^p)}\le C\|\textbf{q}_{\epsilon\delta}\|_{H^{1, 2}(\Omega_\epsilon^p)}\le C. 
		\end{align}
		From \eqref{eqn:vrw}, we see that
		\begin{align*}
			|\langle \partial_t \textbf{q}_\epsilon, \psi\rangle_{\Omega_\epsilon^p}|\le \epsilon^2\mu \|\nabla\textbf{q}_\epsilon\|_{\Omega_\epsilon^p}\|\nabla\psi\|_{\Omega_\epsilon^p}.
		\end{align*}
		Consequently,
		\begin{align*}
			\sup_{\|\psi\|_{\textbf{H}^{1, 2}_{div}(\Omega_\epsilon^p)}\le 1}|\langle \partial_t \textbf{q}_\epsilon, \psi\rangle_{\Omega_\epsilon^p}|\le \mu \|\nabla\textbf{q}_\epsilon\|_{\Omega_\epsilon^p} \text{  as   } 0<\epsilon<<1.
		\end{align*}
		Integration over the time gives,
		\begin{align*}
			\|\partial_t\textbf{q}_\epsilon\|_{L^2(S; \textbf{H}^{-1,2}_{div}(\Omega_\epsilon^p))}\le M_q^*.
		\end{align*}
		We choose $\phi = \chi_{(0,t)}u_{\epsilon\delta} \in L^2(S;H^{1,2}(\Omega_\epsilon^p))$ in the weak formulation $\eqref{eqn:M1RWF}$ and after simplification obtain 
		\begin{align}
			{\left\lVert u_{\epsilon\delta}(t)\right\rVert}^2_{\Omega^p_\epsilon}+2\alpha\left\lVert\nabla u_{\epsilon\delta} \right\rVert^2_{(\Omega^p_\epsilon)^t}  \leq {\left\lVert u_0\right\rVert}^2_{\Omega^p_\epsilon}+2\left|k_d \langle R(u_{\epsilon\delta},v_{\epsilon\delta})-\psi_\delta(w_{\epsilon\delta}), u_{\epsilon\delta} \rangle_{(\Gamma^*_\epsilon)^t}\right|\notag\\
			+2\left|\langle d, u_{\epsilon\delta}\rangle_{(\partial\Omega_{in})^t}\right|+2\left|\langle e, u_{\epsilon\delta}\rangle_{(\partial\Omega_{out})^t}\right|.\label{eqn: LM1}
		\end{align}
		Since
		\begin{align}\label{eqn:v}
			\int_{0}^{t}\int_{\Omega_\epsilon^p} (\textbf{q}_\epsilon.\nabla u_{\epsilon\delta}) u_{\epsilon\delta}dxdt&=\frac{1}{2}\int_{0}^{t}\int_{\Omega_\epsilon^p} \textbf{q}_\epsilon.\nabla u_{\epsilon\delta}^2dxdt\notag\\
			&=\frac{1}{2}\int_{0}^{t}\int_{\Omega_\epsilon^p} \textbf{q}_\epsilon.\vec{n} u_{\epsilon\delta}^2dxdt-\frac{1}{2}\int_{0}^{t}\int_{\Omega_\epsilon^p} \nabla.\textbf{q}_\epsilon u_{\epsilon\delta}^2dxdt=0.
		\end{align}
		We use Lemma \ref{thm:T1} to deal with the outer boundary term in $\eqref{eqn: LM1}$ and derive the inequality
		\begin{align*}
			2\left|\langle d, u_{\epsilon\delta}\rangle_{(\partial\Omega_{in})^t}\right|+2\left|\langle e, u_{\epsilon\delta}\rangle_{(\partial\Omega_{out})^t}\right|\le\frac{1}{2\gamma}\left(\|d\|^2_{(\partial\Omega_{in})^t}+\|e\|^2_{(\partial\Omega_{out})^t}\right)+2C\gamma\|u_{\epsilon\delta}\|^2_{L^2(S; H^{1,2}(\Omega_\epsilon^p))},
		\end{align*}
		where $\gamma$ is the constant comes from \textit{Young's }inequality. We now estimate the reaction rate term. Since $\left(\frac{k_1 u_{\epsilon\delta} + k_2 v_{\epsilon\delta}}{2}\right)^2\geq k_1 u_{\epsilon\delta} k_2 v_{\epsilon\delta}$ and $\left(\frac{k_1 u_{\epsilon\delta} + k_2 v_{\epsilon\delta}}{1+ k_1 u_{\epsilon\delta}+ k_2 v_{\epsilon\delta}}\right)^2\leq1$, therefore
		\begin{align}\label{eqn:RI}
			R(u_{\epsilon\delta}, v_{\epsilon\delta})=k\frac{k_1u_{\epsilon\delta} k_2 v_{\epsilon\delta}}{(1+k_1u_{\epsilon\delta}+k_2v_{\epsilon\delta})^2}\le \frac{k}{4}.
		\end{align}
		This in combination with the trace inequality \eqref{eqn:TI1} gives that
		\begin{align*}
			|2k_d \langle R(u_{\epsilon\delta},v_{\epsilon\delta})-\psi_\delta(w_{\epsilon\delta}), u_{\epsilon\delta}
			\rangle_{(\Gamma^*_\epsilon)^t}|\le C_1+2C\gamma_1{\|u_{\epsilon\delta}\|}^2_{(\Omega^p_\epsilon)^t}+2C\gamma_1{\epsilon}^2{\|\nabla u_{\epsilon\delta}\|}^2_{(\Omega^p_\epsilon)^t},
		\end{align*}
		where the \textit{Young's }inequality constant $\gamma_1$, the \textit{trace} constant $C$ and $C_1\left(:=\frac{k_d^2T}{2\gamma_1}\left(1+\frac{k}{4}\right)^2\frac{|\Gamma||\Omega|}{|Y|}\right)$ are independent of $\epsilon$ and $\delta$. As $0<\epsilon<<1$ so from \eqref{eqn: LM1} we get 
		\begin{align} 
			{\left\lVert u_{\epsilon\delta}(t) \right\rVert}^2_{\Omega^p_\epsilon}+(2\alpha-2C\gamma-2C\gamma_1)\left\lVert\nabla u_{\epsilon\delta} \right\rVert^2_{(\Omega^p_\epsilon)^t}\le C_2+(2C\gamma+2C\gamma_1){\|u_{\epsilon\delta} \|}^2_{(\Omega^p_\epsilon)^t},\label{eqn:II} 
			% \notag
		\end{align}
		\text{where }$C_2=C_1+{\left\lVert u_0 \right\rVert}^2_{\Omega^p_\epsilon}+\frac{1}{2\gamma}\left(\|d\|^2_{(\partial\Omega_{in})^t}+\|e\|^2_{(\partial\Omega_{out})^t}\right)$. Now, for the choice of $\gamma=\frac{\alpha}{4C}=\gamma_1$ and as an application of Gronwall's inequality, we have
		\begin{align}
			{\left\lVert u_{\epsilon\delta}(t) \right\rVert}^2_{\Omega^p_\epsilon}\le C_2+\alpha{\|u_{\epsilon\delta}\|}^2_{(\Omega^p_\epsilon)^t}\Rightarrow  {\left\lVert u_{\epsilon\delta}(t) \right\rVert}^2_{\Omega^p_\epsilon}\leq C_2 e^{\alpha t}, \text{ for a.e. }t\in S.\notag
		\end{align}
		Integration w.r.t. time gives, $\|u_{\epsilon\delta}\|_{(\Omega_\epsilon^p)^T}\le M_u$.
		% Gronwall's inequality will give
		% \begin{align*}
			%   {\left\lVert u_\epsilon(t) \right\rVert}^2_{\Omega^p_\epsilon}\leq M_u e^{2\alpha t}, \text{ for a.e. }t\in S.
			% \end{align*}
		Similarly, for $v_{\epsilon\delta}$ we will get $ {\left\lVert v_{\epsilon\delta} \right\rVert}_{(\Omega^p_\epsilon)^T}\leq M_v.$
		% \begin{align*}
			%   {\left\lVert v_\epsilon(t) \right\rVert}^2_{\Omega^p_\epsilon}\leq M_v e^{2D_2t}, \text{ for a.e. }t\in S.
			% \end{align*}
		Again, we deduce from \eqref{eqn:II} that
		\begin{align*}
			\alpha\left\lVert\nabla u_{\epsilon\delta}\right\rVert^2_{(\Omega^p_\epsilon)^t}\leq C_2+\alpha{\| u_{\epsilon\delta}\|}^2_{(\Omega^p_\epsilon)^t}\implies {\left\lVert\nabla u_{\epsilon\delta}\right\rVert}_{(\Omega^p_\epsilon)^T}\le \bar{M}_u.
		\end{align*}
		Similarly, for $v_{\epsilon\delta}$ we get $\left\lVert\nabla v_\epsilon\right\rVert_{(\Omega^p_\epsilon)^T}\le \bar{M}_v.$
		% \begin{align*}
			%   \left\lVert\nabla v_\epsilon\right\rVert^2_{(\Omega^p_\epsilon)^t}\leq \bar{M}_ve^{2\alpha t}, \;  \text{ where }\bar{M}_v=\frac{M_v}{(2\alpha-2C\gamma-C)}.
			% \end{align*}
		Following the same line of the proof of \eqref{eqn:4e} we establish
		\begin{align*}
			\|u_{\epsilon\delta}\|_{L^\infty(S; L^4(\Omega_\epsilon^p))}\le C \text{  and  } \|v_{\epsilon\delta}\|_{L^\infty(S; L^4(\Omega_\epsilon^p))}\le C. 
		\end{align*}
		Furthermore, 
		\begin{align*}
			\left|\langle \frac{\partial u_{\epsilon\delta}}{\partial t},\phi\rangle_{\Omega_\epsilon^p}\right|&\le|\langle \bar{D}^\epsilon_1\nabla u_{\epsilon\delta},\nabla\phi\rangle_{\Omega_\epsilon^p}|+|\langle \textbf{q}_\epsilon u_{\epsilon\delta}, \nabla\phi\rangle_{\Omega_\epsilon^p}|+|k_d\langle R(u_{\epsilon\delta},v_{\epsilon\delta})-\psi_\delta(w_{\epsilon\delta}),\phi\rangle_{\Gamma^*_\epsilon}|+|\langle d,\phi\rangle_{\partial \Omega_{in}}|+|\langle e,\phi\rangle_{\partial \Omega_{out}}|\\
			&\le\left\{M\|\nabla u_{\epsilon\delta}\|_{\Omega_\epsilon^p}+\|\textbf{q}_\epsilon u_{\epsilon\delta}\|_{\Omega_\epsilon^p}+Ck_d\sqrt{\frac{|\Gamma||\Omega|}{|Y|}}+C\left(\|d\|_{\partial\Omega_{in}}+\|e\|_{\partial\Omega_{out}}\right)\right\} \|\phi\|_{H^{1,2}(\Omega_\epsilon^p)}.
		\end{align*}
		
		Now applying $i.$ and $iv.$ we get
		\begin{align}\label{eqn:qu}
			\|\textbf{q}_\epsilon u_{\epsilon\delta}\|_{\Omega_\epsilon^p}\le \|u_{\epsilon\delta}\|_{L^4(\Omega_\epsilon^p)}\|\textbf{q}_{\epsilon\delta}\|_{L^4(\Omega_\epsilon^p)} \le C.
		\end{align} 
		This implies $\left\lVert \frac{\partial u_{\epsilon\delta}}{\partial t} \right\rVert_{L^2(S;H^{1,2}(\Omega_\epsilon^p)^{*})}\le M_u^*$. Similarly, for $v_{\epsilon\delta}$ we establish $\left\lVert \frac{\partial v_{\epsilon\delta}}{\partial t} \right\rVert_{L^2(S;H^{1,2}(\Omega_\epsilon^p)^{*})}\le M_v^*.$
		% \begin{align*}
			%   \left\lVert \frac{\partial v_\epsilon}{\partial t} \right\rVert_{L^2(S;H^{1,2}(\Omega_\epsilon^p)^{*})}\le C.
			% \end{align*}
		% \end{proof}
	% \begin{lemma}
		%  The concentration $w_\epsilon$ satisfies
		%  \begin{itemize}
			%    \item[(i)] $\|w_\epsilon(t)\|^2_{\Gamma^*_\epsilon}\leq M_w e^T$, a.e. on $S\times\Gamma^*_\epsilon.$
			%    \item[(ii)] $\left\lVert\frac{\partial w_\epsilon}{\partial t}\right\rVert^2_{L^2(S\times\Gamma^*_\epsilon)}\le C$, a.e. on $S\times\Gamma^*_\epsilon.$
			%  \end{itemize} 
		%  \end{lemma}  
	%  \begin{proof}
		Next, we put $\eta=\chi_{(0,t)}w_{\epsilon\delta}  \in $ $L^2(S\times\Gamma^*_\epsilon)$ in \eqref{eqn:M12RWF} and integrate w.r.t. $t$ to get
		\begin{align}\label{eqn:N2}
			{\left\lVert w_{\epsilon\delta}(t)\right\rVert}^2_{\Gamma^*_\epsilon}-{\left\lVert w_0\right\rVert}^2_{\Gamma^*_\epsilon}=2 k_d\langle R(u_{\epsilon\delta}, v_{\epsilon\delta})-\psi_\delta(w_{\epsilon\delta}), w_{\epsilon\delta}\rangle_{(\Gamma^*_\epsilon)^t}.
		\end{align}
		We note that
		\begin{align*}
			|k_d\langle R(u_{\epsilon\delta}, v_{\epsilon\delta})-\psi_\delta(w_{\epsilon\delta}), w_{\epsilon\delta} \rangle_{(\Gamma^*_\epsilon)^t}|\leq  \frac{k_d^2T}{2}(1+\frac{k}{4})^2\sup_{\epsilon>0}\epsilon|\Gamma^*_\epsilon|+\frac{1}{2}{\left\lVert w_{\epsilon\delta}\right\rVert}^2_{(\Gamma^*_\epsilon)^t},
		\end{align*}
		then \eqref{eqn:N2} becomes
		\begin{align*}
			&{\left\lVert w_{\epsilon\delta}(t)\right\rVert}^2_{\Gamma^*_\epsilon}
			\leq C_3+{\left\lVert w_{\epsilon\delta}\right\rVert}^2_{(\Gamma^*_\epsilon)^t}, \text{ where }
			C_3={\left\lVert w_0 \right\rVert}^2_{\Gamma^*_\epsilon}+k_d^2T(1+\frac{k}{4})^2\frac{|\Gamma||\Omega|}{|Y|}
		\end{align*}
		which by Gronwall's inequality yields ${\left\lVert w_{\epsilon\delta}\right\rVert}_{(\Gamma^*_\epsilon)^T}\leq M_w.$
		%   \begin{align*}
			%     {\left\lVert w_\epsilon(t)\right\rVert}^2_{\Gamma^*_\epsilon}\leq M_w e^T.
			% \end{align*}
		Again, taking $\eta=\chi_{(0,t)}\frac{\partial w_{\epsilon\delta}}{\partial t}$ in \eqref{eqn:M12RWF}, we obtain $  \left\lVert\frac{\partial w_{\epsilon\delta}}{\partial t}\right\rVert^2_{(\Gamma^*_\epsilon)^T}\le k_d^2T\left(1+\frac{k}{4}\right)^2\frac{|\Gamma||\Omega|}{|Y|}\implies \left\lVert\frac{\partial w_{\epsilon\delta}}{\partial t}\right\rVert _{(\Gamma^*_\epsilon)^T}\le M_w^*.$\\
		% \begin{align*}
			%   \left\lVert\frac{\partial w_\epsilon}{\partial t}\right\rVert^2_{L^2(S\times\Gamma^*_\epsilon)}\le k_d^2T\left(1+\frac{k}{4}\right)^2\frac{|\Gamma||\Omega|}{|Y|}=C.
			% \end{align*}
		We now use Proposition III.1.1 of \cite{temam1977navier} and the identity \eqref{eqn:vrw} implies that there exists $p_\epsilon=\partial_tP_\epsilon\in W^{-1,\infty}(S; L^2_0(\Omega_\epsilon^p))$ such that
		\begin{align}\label{eqn:p1}
			-\int_{\Omega_\epsilon^p} P_\epsilon(t)\nabla.\psi dx=-\int_{\Omega_\epsilon^p} (\textbf{q}_\epsilon(t)-\textbf{q}_0).\psi dx-\epsilon^2\mu\int_{0}^{t}\int_{\Omega_\epsilon^p}\nabla\textbf{q}_\epsilon(s):\nabla\psi dxds, \text{ for all } \psi\in\textbf{H}^1_0(\Omega_\epsilon^p).
		\end{align}
		Therefore, we have
		\begin{align*}
			\langle \nabla P_\epsilon(t), \psi\rangle_{\Omega_\epsilon^p}\le (\|\textbf{q}_\epsilon(t)\|_{\Omega_\epsilon^p}+\|\textbf{q}_0\|_{\Omega_\epsilon^p})\|\psi\|_{\Omega_\epsilon^p}+\mu\int_{0}^{t}\|\nabla\textbf{q}_\epsilon\|_{\Omega_\epsilon^p}\|\nabla\psi\|_{\Omega_\epsilon^p}dt,
		\end{align*}
		as $0<\epsilon<<1$. Applying $i.$ we can write
		\begin{align*}
			\langle \nabla P_\epsilon(t), \psi\rangle_{\Omega_\epsilon^p}\le C\|\psi\|_{\textbf{H}^1_0(\Omega_\epsilon^p)}.
		\end{align*}
		This yields \eqref{eqn:pe}.
	\end{proof}
	\begin{rem}cf. \cite{bavnas2017homogenization}
		We multiply the equation \eqref{eqn:p1} by a test function $\partial_t\zeta$ where $\zeta\in C_0^\infty(S)$ and integration w.r.t time leads to
		\begin{align}\label{eqn:p2}
			-\int_{0}^{t}\int_{\Omega_\epsilon^p}P_\epsilon(t)\nabla.\psi dx\partial_t\zeta dt=\int_{0}^{t}\langle \partial_t \textbf{q}_\epsilon, \psi\rangle_{\Omega_\epsilon^p}\zeta(t)dt+\epsilon^2\mu\int_{0}^{t}\int_{\Omega_\epsilon^p}\nabla\textbf{q}_\epsilon:\nabla\psi dx\zeta(t)dt.
		\end{align}
		The above formulation implies that $P_\epsilon(t)=\int_{0}^{t}p_\epsilon(s)ds$ satisfies the equation \eqref{eqn:v1} in the distributional sense. Moreover, the formulation \eqref{eqn:p2} is equivalent to \eqref{eqn:vw} for $\psi\in \textbf{H}^{1,2}_{div}(\Omega_\epsilon^p)$. We will use the formulation \eqref{eqn:p2} to derive the two-scale limit of \eqref{eqn:v1} due to the limited time regularity of the pressure $p_\epsilon\in W^{-1,\infty}(S; L^2_0(\Omega_\epsilon^p)).$
	\end{rem}
	\subsection{Existence of solution of the regularized problem}
	We now show the existence of solution by applying \textit{Rothe's method} and \textit{Galerkin's method}. Since the chemistry does not affect the fluid flow, therefore we can treat the unsteady Stokes system $\eqref{eqn:v1}-\eqref{eqn:v4}$ independently from the transport of the mobile and immobile species. We follow the idea of \cite{evans2010partial, temam1977navier} and establish the existence of a unique weak solution $\textbf{q}_\epsilon\in \mathcal{Q}_\epsilon$ by applying Galerkin's approximation.

	Next for arbitrary $(u_{\epsilon\delta}, v_{\epsilon\delta})\in \mathcal{U}_\epsilon\times\mathcal{V}_\epsilon$, we consider \eqref{eqn:M12RWF} with $w_{\epsilon\delta}(0, x)=w_0(x)$. Since $R(u_{\epsilon\delta}, v_{\epsilon\delta})$ is constant in $w_{\epsilon\delta}$ and $\psi_\delta(w_{\epsilon\delta})$ is Lipschitz w.r.t $w_{\epsilon\delta}$ so by \textit{Picard-Lindelof theorem} there exists a unique local solution $w_{\epsilon\delta} \in C^1(0,T_1(x))$ of the problem \eqref{eqn:M12RWF}, where $T_1(x)\le T$. Now, upon partial integration of the strong form of \eqref{eqn:M12RWF} and using \eqref{eqn:RI}, we have
	\begin{align*}
		|w_{\epsilon\delta}(t,x)|&\le |w_0(x)|+k_d\int_{0}^{t}(1+\frac{k}{4})dt\le\|w_0\|_{L^\infty(\Omega)}+k_d(1+\frac{k}{4})T,\; \text{ for a.e. } t \text{ and } x.
	\end{align*}
	
	Therefore, for a.e. $t\in S$, the solution of the equation for immobile species exists globally. Now, we define two billinear forms on $\Omega_\epsilon^p$ such that $b(u_{\epsilon\delta},\phi)=\langle \bar{D}^\epsilon_1\nabla u_{\epsilon\delta}-\textbf{q}_\epsilon u_{\epsilon\delta},\nabla\phi\rangle_{\Omega_\epsilon^p}$ and $b(v_{\epsilon\delta}, \theta)= \langle \bar{D}^\epsilon_2\nabla v_{\epsilon\delta}-\textbf{q}_\epsilon v_{\epsilon\delta},\nabla\theta\rangle_{\Omega_\epsilon^p}$. Then, \eqref{eqn:M2RWF} can be rewritten as
	\begin{align*}
		\langle\frac{\partial v_{\epsilon\delta}}{\partial t},\theta\rangle_{\Omega_\epsilon^p}+b(v_{\epsilon\delta}, \theta)+\langle \frac{\partial w_{\epsilon\delta}}{\partial t},\theta\rangle_{\Gamma^*_\epsilon}+\langle g,\theta\rangle_{\partial\Omega_{in}}+\langle h,\theta\rangle_{\partial\Omega_{out}}=0,\\
		\text{ for all } \theta \in H^{1,2}(\Omega_\epsilon^p)\;\text{ and a.e. $t$ in } [0,T].
	\end{align*}
	In other words, for arbitary $u_{\epsilon\delta}\in\mathcal{U}_\epsilon$, we have to find $v_{\epsilon\delta} : [0,T] \rightarrow H^{1,2}(\Omega_\epsilon^p)$ such that
	\begin{subequations}
		\begin{align}
			\langle\frac{\partial v_{\epsilon\delta}}{\partial t},\theta\rangle_{\Omega_\epsilon^p}+b(v_{\epsilon\delta},\theta)&=\langle f(v_{\epsilon\delta}),\theta \rangle_{\Gamma^*_\epsilon}-\langle g,\theta\rangle_{\partial\Omega_{in}}-\langle h,\theta\rangle_{\partial\Omega_{out}}, \label{eqn:R}\\
			&\qquad\qquad\text{ for all } \theta \in H^{1,2}(\Omega^p_\epsilon) \; \text{ and a.e. $t\in S$  } , \notag\\
			v_{\epsilon\delta}|_{t=0}&=v_0,\label{eqn:Rb}
		\end{align}
	\end{subequations}
	where $f(v_{\epsilon\delta})=k_d(\psi_\delta(w_{\epsilon\delta})-R(u_{\epsilon\delta}, v_{\epsilon\delta}))$.
	\subsubsection{Rothe's method}(cf. \cite{neuss1992homogenization})
	Let $\{0=t_0<t_1<\cdots<t_{n-1}<t_n=T\}$ be a partition of the time interval $[0,T]$ with step size $h=(t_i-t_{i-1})=\frac{T}{n}$, where $n\in \mathbb{N}$. We employ time discretization to the equation \eqref{eqn:R} which leads to
	\begin{align}
		\langle \frac{v_i-v_{i-1}}{h}, \theta \rangle_{\Omega_\epsilon^p}+b(v_i,\theta)=\langle f(v_{i-1}),\theta \rangle_{\Gamma_\epsilon^*}-\langle g_{i-1},\theta\rangle_{\partial\Omega_{in}}-\langle h_{i-1},\theta\rangle_{\partial\Omega_{out}}  ,  \label{eqn:R2}
	\end{align}
	for all $\theta \in H^{1,2}(\Omega_\epsilon^p)$ and for all $i=1,2,\cdots,n$, where $f(v_{i-1})=k_d(\psi_\delta(w_{\epsilon\delta})-R(u_{\epsilon\delta}, v_{i-1}))$. Now, we introduce the linear operator $\mathcal{T}_h : H^{1,2}(\Omega_\epsilon^p)\rightarrow L^2(\Omega_\epsilon^p) $ such that 
	\begin{align*}
		\langle \mathcal{T}_hv, \theta \rangle_{\Omega_\epsilon^p}=\frac{1}{2}\langle v,\theta \rangle_{\Omega_\epsilon^p}+b(v,\theta)
	\end{align*}
	and the linear form
	\begin{align*}
		\langle l_{i-1},\theta \rangle_{\Omega_\epsilon^p}=\langle f(v_{i-1}), \theta \rangle_{\Gamma^*_\epsilon}-\langle g_{i-1},\theta\rangle_{\partial\Omega_{in}}-\langle h_{i-1},\theta\rangle_{\partial\Omega_{out}}+\frac{1}{2}\langle v_{i-1}, \theta \rangle_{\Omega_\epsilon^p}.
	\end{align*}
	Then, our new equation looks like
	\begin{align*}
		\langle \mathcal{T}_hv, \theta \rangle_{\Omega_\epsilon^p} = \langle l_{i-1}, \theta \rangle_{\Omega_\epsilon^p},  \;\text{ for all } \theta \in H^{1,2}(\Omega_\epsilon^p),
	\end{align*}
	where
	\begin{align*}
		\langle \mathcal{T}_hv, v \rangle_{\Omega_\epsilon^p}\ge\frac{1}{2}\|v\|^2_{\Omega_\epsilon^p}+\alpha\|\nabla v\|^2_{\Omega_\epsilon^p}\geq C_4\|v\|^2_{H^{1,2}(\Omega_\epsilon^p)}, \text{ where } C_4=min\{\alpha,\frac{1}{2}\}.
	\end{align*}
	as $\langle \textbf{q}v, \nabla v\rangle_{\Omega_\epsilon^o}=0$ by \eqref{eqn:v}. Again,
	\begin{align*}
		\langle \mathcal{T}_hv, v \rangle_{\Omega_\epsilon^p}\le \frac{1}{2}\|v\|^2_{\Omega_\epsilon^p}+M\|\nabla v\|^2_{\Omega_\epsilon^p}\leq C_5\|v\|^2_{H^{1,2}(\Omega_\epsilon^p)}, \text{ where }C_5=max\{M,\frac{1}{2}\}.
	\end{align*}  
	Hence, $\langle \mathcal{T}_h ., . \rangle_{\Omega_\epsilon^p}$ is an elliptic and bounded bilinear form and since $l_{i-1}$ is a bounded functional on $H^{1,2}(\Omega_\epsilon^p)$ so by Lax-Milgram lemma there exists a unique $v_i \in H^{1,2}(\Omega_\epsilon^p)$ satisfying \eqref{eqn:R2}. Next, we define Rothe functions $v_n : [0,T] \rightarrow H^{1,2}(\Omega_\epsilon^p)$ by 
	\begin{align*}
		v_n(t)=v_i\left(\frac{t-t_{i-1}}{h}\right)-v_{i-1}\left(\frac{t-t_i}{h}\right),
	\end{align*}
	and the step function $\overline{v}_n: [0,T] \rightarrow H^{1,2}(\Omega_\epsilon^p)$ in such a way that $\overline{v}_n(t)=v_i$, for all $t\in (t_{i-1},t_i]$ and $\overline{v}_n(0)=v_0$. To show that this Rothe functions converges to a solution of the continuous equation \eqref{eqn:R} we have to obtain a-priori estimates for $v_n$.
	
	\begin{lemma}\label{lemma: 1}
		The difference $(v_i-v_{i-1})$ satisfies the inequality
		\begin{align*}
			\left\lVert\frac{v_i-v_{i-1}}{h}\right\rVert^2_{\Omega_\epsilon^p}+\frac{\lambda}{h^2}\|\nabla(v_i-v_{i-1})\|^2_{\Omega_\epsilon^p}\leq C, 	\text{ for all } i=1,2,\cdots,n,
			\text{  where  }\lambda=\begin{cases}
				\frac{1}{2} \quad\text{\hspace{0.25cm}if $i=1$}\\
				2\quad\text{otherwise}.
			\end{cases}
		\end{align*}		
	\end{lemma}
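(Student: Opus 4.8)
The plan is to derive the bound by the subtract-and-test device that is standard in Rothe's method, isolating the first step $i=1$ from the generic step $i\ge 2$; the two regimes are exactly what produces the two values of $\lambda$. Throughout I would exploit that the advective part of $b$ is skew, i.e. $\langle \textbf{q}_\epsilon\,\zeta,\nabla\zeta\rangle_{\Omega_\epsilon^p}=0$ by \eqref{eqn:v} (divergence-free velocity with the no-slip condition), so that the coercivity from assumption \textbf{A6} gives $b(\zeta,\zeta)\ge \alpha\|\nabla\zeta\|^2_{\Omega_\epsilon^p}$.

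For $i\ge 2$ I would write the time-discrete identity \eqref{eqn:R2} at the indices $i$ and $i-1$, subtract them, and set $\delta_i:=v_i-v_{i-1}$. This yields
\begin{align*}
\left\langle \frac{\delta_i-\delta_{i-1}}{h},\theta\right\rangle_{\Omega_\epsilon^p}+b(\delta_i,\theta)=\langle f(v_{i-1})-f(v_{i-2}),\theta\rangle_{\Gamma^*_\epsilon}-\langle g_{i-1}-g_{i-2},\theta\rangle_{\partial\Omega_{in}}-\langle h_{i-1}-h_{i-2},\theta\rangle_{\partial\Omega_{out}}
\end{align*}
for every $\theta\in H^{1,2}(\Omega_\epsilon^p)$. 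I would then insert $\theta=\tfrac{\delta_i}{h}$: the discrete time term is handled by the elementary identity $\langle a-b,a\rangle=\tfrac12(\|a\|^2-\|b\|^2+\|a-b\|^2)$, which produces the target quantity $\big\|\tfrac{\delta_i}{h}\big\|^2_{\Omega_\epsilon^p}$ together with a telescoping difference of squares, while the principal term contributes the coercive gradient piece.

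The work is in the right-hand side. The reaction difference is controlled by the local Lipschitz continuity of $R$ in the spirit of \eqref{eqn:LL} (the term $\psi_\delta(w_{\epsilon\delta})$ carries no $v$-dependence, and $\psi_\delta$ is itself Lipschitz), so that $|f(v_{i-1})-f(v_{i-2})|\le k_d L_R|\delta_{i-1}|$ up to the frozen data $u_{\epsilon\delta},w_{\epsilon\delta}$; the ensuing surface integral over $\Gamma^*_\epsilon$ is estimated by the trace inequality \eqref{eqn:TI1} (Lemma \ref{thm:T1}) and then Young's inequality, which lets me absorb a fraction of $\alpha\|\nabla\delta_i\|^2_{\Omega_\epsilon^p}$ into the coercive term and push the remainder onto the previous index. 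The boundary-data differences are treated identically, using the trace inequality on $\partial\Omega_{in},\partial\Omega_{out}$ and the time-regularity of $g,h$. What remains is a recursion of the form $a_i-a_{i-1}+(\text{nonneg.})\le C+(\text{small})\,a_{i-1}$ for $a_i:=\big\|\tfrac{\delta_i}{h}\big\|^2_{\Omega_\epsilon^p}+\tfrac{\lambda}{h^2}\|\nabla\delta_i\|^2_{\Omega_\epsilon^p}$, which I close by the discrete Gronwall inequality together with the uniform bounds on $v_{\epsilon\delta},\nabla v_{\epsilon\delta}$ and on $R,\psi_\delta$ from Lemma \ref{lemma:M1LB}.

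For the base case $i=1$ there is no preceding identity to subtract, so I would instead test \eqref{eqn:R2} (with $i=1$) directly against $\tfrac{\delta_1}{h}=\tfrac{v_1-v_0}{h}$ and split the principal term as $b(v_1,\cdot)=b(v_0,\cdot)+b(\delta_1,\cdot)$. The term $b(\delta_1,\delta_1)$ is again coercive, whereas $b(v_0,\delta_1)$ is bounded by Cauchy--Schwarz and Young's inequality using the assumed regularity $v_0\in H^{1,2}(\Omega)$ from \textbf{A4}; this splitting consumes part of the coercivity and is precisely what forces the reduced constant $\lambda=\tfrac12$ at the first step, as opposed to $\lambda=2$ for $i\ge 2$. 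I expect the main obstacle to be the surface and boundary contributions: one must choose the Young weights so that every gradient term (both from $\Gamma^*_\epsilon$ via the trace inequality and from the reaction and flux data) is absorbed into $\alpha\|\nabla\delta_i\|^2_{\Omega_\epsilon^p}$ while keeping the constants independent of $\epsilon$, and one must feed the step-$(i-1)$ bound into the recursion so that the discrete Gronwall argument closes uniformly in the number of time steps.
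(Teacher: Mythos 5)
Your proposal follows essentially the same route as the paper's proof: for $i\ge 2$ you subtract consecutive instances of \eqref{eqn:R2}, test with $\frac{v_i-v_{i-1}}{h}$, use the skew-symmetry \eqref{eqn:v} and coercivity from \textbf{A6}, control the reaction and boundary differences by Lipschitz continuity, trace and Young inequalities, and close the resulting recursion $\sigma_i\le C+C'\sigma_{i-1}$ by discrete Gronwall; for $i=1$ you test directly and split off $b(v_0,\cdot)$, which is exactly the paper's mechanism producing $\lambda=\tfrac12$ at the first step. The only cosmetic difference is that you invoke the polarization identity for the discrete time term where the paper simply applies Young's inequality to $\langle\frac{v_{j-1}-v_{j-2}}{h},\frac{v_j-v_{j-1}}{h}\rangle_{\Omega_\epsilon^p}$; both yield the same recursion.
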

	
	\begin{proof}
		The proof is mere calculation so we give it in the appendix.
	\end{proof}

	\begin{lemma}
		For $v_i$, ${v}_n$ and $\bar{v}_n$, we have the following bounds
		
		% We have the a-priori estimates for $v_i$\\
		$(a) \|v_i\|_{H^{1,2}(\Omega_\epsilon^p)}\leq C$ and $\left\lVert\frac{v_i-v_{i-1}}{h}\right\rVert_ {\Omega_\epsilon^p}\leq C$ for all $i=1,2,\cdots,n$.
		
		% For the Rothe's step functions this means\\
		(b) $\|\bar{v}_n(t)\|_ {H^{1,2}(\Omega_\epsilon^p)}\leq C$ and $\left\lVert\frac{dv_n(t)}{dt}\right\rVert_{\Omega_\epsilon^p}\leq C$ for a.e. $t\in [0,T]$.
	\end{lemma}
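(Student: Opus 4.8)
The plan is to read off as much as possible directly from Lemma \ref{lemma: 1} and then close the one remaining gap with a single test-function estimate. Lemma \ref{lemma: 1} already delivers $\|(v_i-v_{i-1})/h\|_{\Omega_\epsilon^p}\le C$ for every $i$, which is precisely the second bound in (a). For the $L^2$-bound on $v_i$ itself I would telescope, writing $v_i=v_0+\sum_{j=1}^i(v_j-v_{j-1})$ and estimating $\|v_i\|_{\Omega_\epsilon^p}\le \|v_0\|_{\Omega_\epsilon^p}+\sum_{j=1}^i h\,\|(v_j-v_{j-1})/h\|_{\Omega_\epsilon^p}\le \|v_0\|_{\Omega_\epsilon^p}+TC$, so the $L^2$-part of the $H^{1,2}$-norm is controlled uniformly in $n$, $\epsilon$ and $\delta$.

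The remaining ingredient is the gradient bound. Here I would test the discrete equation \eqref{eqn:R2} with $\theta=v_i$. The convective contribution drops out by \eqref{eqn:v} (divergence-free $\textbf{q}_\epsilon$ with the no-slip condition), and the diffusive part is coercive by assumption $(\textbf{A6}.)$, so $b(v_i,v_i)\ge\alpha\|\nabla v_i\|^2_{\Omega_\epsilon^p}$. The time-difference term is bounded by Cauchy--Schwarz together with the two estimates already in hand, $|\langle (v_i-v_{i-1})/h,\,v_i\rangle_{\Omega_\epsilon^p}|\le \|(v_i-v_{i-1})/h\|_{\Omega_\epsilon^p}\|v_i\|_{\Omega_\epsilon^p}\le C$. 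For the right-hand side I would use that $f(v_{i-1})=k_d(\psi_\delta(w_{\epsilon\delta})-R(u_{\epsilon\delta},v_{i-1}))$ is bounded by $k_d(1+k/4)$ thanks to $0\le\psi_\delta\le1$ and \eqref{eqn:RI}, while the data $g,h$ lie in $L^2$ of the respective outer boundaries. Combining the scaled trace inequality \eqref{eqn:TI1} on $\Gamma_\epsilon^*$ and Lemma \ref{thm:T1} on $\partial\Omega$ with Young's inequality, each boundary term is estimated by $\eta\|\nabla v_i\|^2_{\Omega_\epsilon^p}+C_\eta(1+\|v_i\|^2_{\Omega_\epsilon^p})$. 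Choosing $\eta$ small enough to absorb the gradient contributions into the coercive term and inserting the already-established $L^2$-bound on $v_i$ yields $\|\nabla v_i\|_{\Omega_\epsilon^p}\le C$, hence $\|v_i\|_{H^{1,2}(\Omega_\epsilon^p)}\le C$, which completes (a).

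Part (b) is then immediate from the construction of the Rothe interpolant. Since $\bar{v}_n(t)=v_i$ on each subinterval $(t_{i-1},t_i]$, the bound $\|\bar{v}_n(t)\|_{H^{1,2}(\Omega_\epsilon^p)}\le C$ is merely the uniform bound on $\|v_i\|_{H^{1,2}(\Omega_\epsilon^p)}$ from (a). The piecewise-linear interpolant $v_n$ has derivative $dv_n/dt=(v_i-v_{i-1})/h$ on $(t_{i-1},t_i)$, so $\|dv_n(t)/dt\|_{\Omega_\epsilon^p}\le C$ follows directly from Lemma \ref{lemma: 1}.

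I expect the only genuinely delicate point to be keeping the constants independent of $\epsilon$ in the boundary estimates: the surface $\Gamma_\epsilon^*$ has area of order $1/\epsilon$, so a naive trace constant would blow up, and one must rely on the $\epsilon$-weighted duality pairing on $\Gamma_\epsilon^*$ and the scaled trace inequality \eqref{eqn:TI1} (with the $\epsilon$-factor already built into $\|\cdot\|_{(\Gamma_\epsilon^*)^t}$) to retain uniformity. Everything else is a routine coercivity-plus-Young argument, and crucially no discrete Gronwall inequality is needed at this stage, since the hard work of bounding the difference quotients was already carried out in Lemma \ref{lemma: 1}.
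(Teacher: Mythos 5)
Your proposal is correct and follows essentially the same route as the paper: telescoping $v_i=v_0+\sum_j(v_j-v_{j-1})$ with the difference-quotient bound from Lemma \ref{lemma: 1} for the $L^2$ part, testing \eqref{eqn:R2} with $\theta=v_i$ and using coercivity from $(\textbf{A6}.)$, the vanishing of the convective term via \eqref{eqn:v}, the boundedness of $f$, and the $\epsilon$-scaled trace inequality with Young's inequality for the gradient part, then reading off (b) from the definitions of $\bar v_n$ and $v_n$. The only cosmetic difference is that you bound the time-difference term by Cauchy--Schwarz against the already-established bounds, whereas the paper splits it by Young's inequality into $\tfrac{\sigma_i}{2}+\tfrac12\|v_i\|^2_{\Omega_\epsilon^p}$; both are equally valid.
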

	\begin{proof}
		We can estimate from $v_i=v_0+\sum_{j=1}^{i}(v_j-v_{j-1})$ that
		\begin{align*}
			% &v_i=v_0+\sum_{j=1}^{i}(v_j-v_{j-1})\\
			&\|v_i\|_{\Omega_\epsilon^p}\leq \|v_0\|_{\Omega_\epsilon^p}+h\sum_{j=1}^{i}\left\lVert\frac{v_j-v_{j-1}}{h}\right\rVert_{\Omega_\epsilon^p}\le C.
		\end{align*}
		We choose the test function $\theta=v_i$ in \eqref{eqn:R2} and obtain
		\begin{align*}
			\langle \frac{v_i-v_{i-1}}{h}, v_i\rangle_{\Omega_\epsilon^p}+b(v_i,v_i)=\langle f(v_{i-1}), v_i\rangle_{\Gamma^*_\epsilon}-\langle g_{i-1},v_i\rangle_{\partial\Omega_{in}}-\langle h_{i-1},v_i\rangle_{\partial\Omega_{out}}.
		\end{align*}
		Relying on (\textbf{A6}) and the trace inequalities, we have
		\begin{align*}
			(\alpha-C \gamma_1-C\gamma_2)\|\nabla v_i\|^2_{\Omega_\epsilon^p}\leq C_{11} +(C\gamma_1+C\gamma_2+\frac{1}{2})\|v_i\|^2_{\Omega_\epsilon^p},
		\end{align*}
		where $C_{11}=C_6+\frac{\sigma_i}{2}+\frac{1}{4\gamma_2}(\|g_{i-1}\|^2_{\partial\Omega_{in}}+\|h_{i-1}\|^2_{\partial\Omega_{out}})$. Hence, for the choice of $\gamma_1=\frac{\alpha}{4C}=\gamma_2$, we can conclude
		\begin{align*}
			\|\nabla v_i\|_{\Omega_\epsilon^p}\leq C \text{  and  } \|v_i\|_{H^{1,2}(\Omega_\epsilon^p)}=\|v_i\|_{\Omega_\epsilon^p}+\|\nabla v_i\|_{\Omega_\epsilon^p}\leq C.
		\end{align*} 
		Next,
		\begin{align*}
			\left\lVert\frac{v_i-v_{i-1}}{h}\right\rVert_{\Omega_\epsilon^p}\leq \sigma_i\leq C \; \text{ follows from Lemma \ref{lemma: 1}. }
		\end{align*}
		Now, for the Rothe's step function, we get $\bar{v}_n(t)=v_i$ for all $t\in (t_{i-1}, t_i]$. Consequently, $\|\bar{v}_n(t)\|_{H^{1,2}(\Omega_\epsilon^p)}=\|v_i\|_{H^{1,2}(\Omega_\epsilon^p)}\leq C$ and $\left\lVert\frac{dv_n}{dt}\right\rVert_{\Omega_\epsilon^p}=\left\lVert\frac{v_i-v_{i-1}}{h}\right\rVert_{\Omega_\epsilon^p}\leq C.$
	\end{proof}
	\begin{lemma}
		The problem $\eqref{eqn:R} - \eqref{eqn:Rb}$ has atmost one solution.
	\end{lemma}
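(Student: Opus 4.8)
The plan is a direct energy estimate combined with Gronwall's inequality. In the decoupled scheme the data $u_{\epsilon\delta}$, $w_{\epsilon\delta}$, $g$ and $h$ are fixed, so \eqref{eqn:R} is linear in $v_{\epsilon\delta}$ apart from the reaction term $R(u_{\epsilon\delta},\cdot)$ entering through $f$. Suppose $v^1$ and $v^2$ are two solutions of \eqref{eqn:R}--\eqref{eqn:Rb} sharing the same initial value $v_0$, and set $\bar v := v^1 - v^2$, so that $\bar v(0)=0$. Subtracting the two weak formulations, the forcing terms $\langle g,\theta\rangle_{\partial\Omega_{in}}$, $\langle h,\theta\rangle_{\partial\Omega_{out}}$ and the dissolution contribution $k_d\langle\psi_\delta(w_{\epsilon\delta}),\theta\rangle_{\Gamma^*_\epsilon}$ cancel, leaving
\begin{align*}
\langle \partial_t \bar v, \theta\rangle_{\Omega_\epsilon^p} + b(\bar v, \theta) = -k_d\langle R(u_{\epsilon\delta}, v^1) - R(u_{\epsilon\delta}, v^2), \theta\rangle_{\Gamma^*_\epsilon}
\end{align*}
for all $\theta\in H^{1,2}(\Omega_\epsilon^p)$ and a.e. $t\in S$.

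First I would test with $\theta=\chi_{(0,t)}\bar v$ and integrate in time. The time-derivative term produces $\tfrac12\|\bar v(t)\|^2_{\Omega_\epsilon^p}$ since $\bar v(0)=0$; the diffusive part of $b$ is bounded below by $\alpha\|\nabla\bar v\|^2_{(\Omega_\epsilon^p)^t}$ by the coercivity assumed in $(\textbf{A6})$; and the convective part vanishes, $\langle\textbf{q}_\epsilon\bar v,\nabla\bar v\rangle_{\Omega_\epsilon^p}=0$, exactly as in \eqref{eqn:v}, because $\nabla\cdot\textbf{q}_\epsilon=0$ in $\Omega_\epsilon^p$ and $\textbf{q}_\epsilon=0$ on $\partial\Omega_\epsilon^p$.

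The key step is the surface term on $\Gamma^*_\epsilon$. Using the local Lipschitz continuity of $R$ in its second argument (the analogue of \eqref{eqn:LL}, which holds by the symmetry of $R$ in $k_1u_{\epsilon\delta}$ and $k_2v_{\epsilon\delta}$ together with the a priori $L^4$ bounds of Lemma \ref{lemma:M1LB}), I bound $|R(u_{\epsilon\delta},v^1)-R(u_{\epsilon\delta},v^2)|\le L_R|\bar v|$ pointwise, so the right-hand side is controlled by $k_dL_R\|\bar v\|^2_{(\Gamma^*_\epsilon)^t}$. The scaled trace inequality \eqref{eqn:TI1} of Lemma \ref{thm:T1} then yields $\|\bar v\|^2_{\Gamma^*_\epsilon}\le C\big(\|\bar v\|^2_{\Omega_\epsilon^p}+\epsilon^2\|\nabla\bar v\|^2_{\Omega_\epsilon^p}\big)$ with $C$ independent of $\epsilon$; choosing the Young parameter small and using $0<\epsilon\ll1$ lets me absorb the gradient contribution into the coercive term $\alpha\|\nabla\bar v\|^2_{(\Omega_\epsilon^p)^t}$.

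After absorption I arrive at $\|\bar v(t)\|^2_{\Omega_\epsilon^p}\le C\int_0^t\|\bar v(s)\|^2_{\Omega_\epsilon^p}\,ds$ with $C$ independent of $\epsilon$ and $\delta$, and Gronwall's inequality forces $\bar v\equiv 0$, i.e. $v^1=v^2$. The hard part is precisely this boundary reaction term: one must use a trace estimate whose constant does not blow up as $\epsilon\to0$ and whose gradient part is genuinely absorbable into the diffusion, which is where the $\epsilon$-weighted surface duality and the scaled trace lemma are indispensable. Should one prefer to regard $w_{\epsilon\delta}$ as coupled to $v_{\epsilon\delta}$ through the ODE \eqref{eqn:M12RWF} rather than fixed, the same scheme still closes: testing the difference of the ODEs with $\bar w:=w^1-w^2$ and exploiting the monotonicity of $\psi_\delta$, so that $\big(\psi_\delta(w^1)-\psi_\delta(w^2)\big)\bar w\ge 0$, together with its $\tfrac1\delta$-Lipschitz bound, produces a coupled pair of Gronwall inequalities in $\|\bar v\|^2$ and $\|\bar w\|^2$ that again yields uniqueness.
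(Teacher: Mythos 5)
Your argument is correct as a proof of the literal statement, and it uses the same basic toolkit as the paper (energy estimate, Lipschitz continuity of $R$, an $\epsilon$-scaled trace inequality, Gronwall), but it is aimed at a different target than the paper's own proof. You prove uniqueness for the decoupled scalar problem \eqref{eqn:R}--\eqref{eqn:Rb} with $u_{\epsilon\delta}$ and $w_{\epsilon\delta}$ frozen, by testing the difference equation with $\bar v$, killing the convection term via \eqref{eqn:v}, bounding the boundary reaction by $k_dL_R\|\bar v\|^2_{(\Gamma^*_\epsilon)^t}$, and absorbing the gradient part of the trace estimate into the coercivity; this is clean and closes without issue (note that no Young parameter is really needed on the reaction term --- the absorption works because the $\epsilon^2\|\nabla\bar v\|^2$ contribution of \eqref{eqn:TI1} is small for $0<\epsilon\ll1$, or alternatively via \eqref{eqn:TI2} with a genuine Young parameter, which is what the paper uses). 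The paper, by contrast, devotes most of the proof under this lemma heading to the \emph{convergence} of the Rothe approximations (Arzel\`a--Ascoli, weak convergence of $\bar v_n$ and $dv_n/dt$, passage to the limit in the discrete scheme), and then proves uniqueness not for \eqref{eqn:R}--\eqref{eqn:Rb} in isolation but for the full coupled regularized system $\eqref{eqn:vrw}-\eqref{eqn:M12RWF}$: it first shows $\textbf{q}^1_\epsilon=\textbf{q}^2_\epsilon$, then controls $W_{\epsilon\delta}$ by $U_{\epsilon\delta}+V_{\epsilon\delta}$ through the ODE, and finally sums the $u$- and $v$-equations and runs Gronwall on $\|U_{\epsilon\delta}+V_{\epsilon\delta}\|^2$. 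Your closing paragraph --- coupling the ODE back in, using the monotonicity of $\psi_\delta$ so that $(\psi_\delta(w^1)-\psi_\delta(w^2))\bar w\ge0$, and closing a pair of Gronwall inequalities --- is essentially the paper's coupled argument, and it is the version you actually need here, since in the paper's construction $w_{\epsilon\delta}$ is obtained by solving \eqref{eqn:M12RWF} for the given $v_{\epsilon\delta}$ and therefore does depend on $v_{\epsilon\delta}$, so the cancellation of $k_d\langle\psi_\delta(w_{\epsilon\delta}),\theta\rangle_{\Gamma^*_\epsilon}$ in your first step is not automatic; you correctly anticipated this, so there is no gap, only a caveat that the decoupled reading of \eqref{eqn:R} should be made explicit before invoking it.
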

	\begin{proof}
		We arrive at a situation of choosing an appropriate Banach space as a range of the Rothe's function on which we can apply a version of \textit{Arcela-Ascoli theorem.} We take the banach space $B=L^2(\Omega_\epsilon^p)$ and $[0,T]$ as the compact set then
		\begin{align*}
			&\|v_n(t)-v_n(s)\|_{\Omega_\epsilon^p}\leq\int_{s}^{t}\left\lVert\frac{dv_n(\sigma)}{d\sigma}\right\rVert_{\Omega_\epsilon^p}d\sigma\leq C|t-s|\text{ and } \|v_n(t)\|_{H^{1,2}(\Omega_\epsilon^p)}\leq C.
			% &\text{ and } \|v_n(t)\|_{H^{1,2}(\Omega_\epsilon^p)}\leq C.
		\end{align*}
		So, $\{v_n\}$ is equicontinuous and therefore by \textit{Arcela-Ascoli theorem} there exists $v\in L^2(\Omega_\epsilon^p) $ such that upto a subsequence $v_n\rightarrow v$ strongly in $C([0,T] ; L^2(\Omega_\epsilon^p))$. We also have
		\begin{align*}
			\int_{\Omega_\epsilon^p}|v_n(t)|^2dx+\int_{\Omega_\epsilon^p}|\nabla v_n(t)|^2dx\leq C \Longrightarrow  \int_{\Omega_\epsilon^p}\|\nabla v(t)\|^2dx\leq C, \text{ as }n\to \infty.
		\end{align*}
		% Passing the limit as $n\rightarrow\infty$ we obtain
		% \begin{align*}
			% 	\int_{\Omega_\epsilon^p}\|\nabla v(t)\|^2dx\leq C.
			% \end{align*}
		Hence, $v(t)\in H^{1,2}(\Omega_\epsilon^p)$. Further, we need to show, $\bar{v}_n(t)\rightharpoonup v(t)$ in $H^{1,2}(\Omega_\epsilon^p)$ for all $t\in[0,T]$. Since $\|\bar{v}_n(t)\|_{H^{1,2}(\Omega_\epsilon^p)}\leq C$, therefore, upto a subsequence $\bar{v}_n(t)\rightharpoonup\bar{v}(t)$ in $H^{1,2}(\Omega_\epsilon^p)$ for all $t\in[0,T]$. Now, for $t\in (t_{i-1}, t_i]$ 
		\begin{align*}
			\|v_n(t)-\bar{v}_n(t)\|^2_{\Omega_\epsilon^p}&=\int_{\Omega_\epsilon^p}|v_{i-1}+\frac{v_i-v_{i-1}}{h}(t-t_{i-1})-v_i|^2dx\\
			&\leq (t-t_i)^2\int_{\Omega_\epsilon^p}{\left|\frac{v_i-v_{i-1}}{h}\right|}^2dx\leq h^2C\rightarrow 0 \; \text{  as   }h\rightarrow 0
		\end{align*}
		and so $\bar{v}(t)\equiv v(t)$ for all $t\in[0,T]$. Our next target is to establish $\frac{dv_n}{dt}\rightharpoonup\frac{dv}{dt}$ in $L^2([0,T]; L^2(\Omega_\epsilon^p))$. As $\left\lVert\frac{dv_n}{dt}\right\rVert_{\Omega_\epsilon^p}\leq C$ for a.e. $t\in [0,T]$, therefore, there exists a subsequence of $\frac{dv_n}{dt}$ which converges weakly to some $\eta$ in $L^2([0,T]; L^2(\Omega_\epsilon^p))$. Claim: $\eta=\frac{dv}{dt}$ in the sense of distribution
		\begin{align*}
			&\langle v,\frac{\partial \phi}{\partial t}\rangle_{\Omega_\epsilon^p}=\lim_{n\rightarrow \infty}\langle v_n,\frac{\partial \phi}{\partial t}\rangle_{\Omega_\epsilon^p}=-\lim_{n\rightarrow \infty}\langle \frac{\partial v_n}{\partial t}, \phi\rangle_{\Omega_\epsilon^p}=-\langle \eta, \phi\rangle_{\Omega_\epsilon^p} \; \text{ for all smooth }\phi\\
			&\implies \eta=\frac{dv}{dt} \; \text{ in the sense of distribution. }
		\end{align*}
		Next, we consider the behavior of $v_n$ on $(\Gamma^*_\epsilon)^t$. For that, we require the $L^2-$bounds $\|v_n(t)\|_{\Omega_\epsilon^p}=\left\lVert v_{i}(\frac{t-t_i}{h})-v_{i-1}(\frac{t-t_i}{h})\right\rVert_{\Omega_\epsilon^p}\leq C$ and $\|\nabla v_n(t)\|_{\Omega_\epsilon^p}\leq \|\nabla v_{i-1}\|_{\Omega_\epsilon^p}+\|\nabla v_{i}\|_{\Omega_\epsilon^p}\leq C$.
		The equation \eqref{eqn:R2} can be written in terms of Rothe's function as
		\begin{align*}
			\langle \frac{dv_n(t)} {dt}, \theta\rangle_{\Omega_\epsilon^p}+b(\bar{v}_n, \theta)= \langle f(\bar{v}_{n-1}), \theta\rangle_{\Gamma^*_\epsilon}-\langle g_{i-1},\theta\rangle_{\partial\Omega_{in}}-\langle h_{i-1},\theta\rangle_{\partial\Omega_{out}}.
		\end{align*}
		Since $\|f(\bar{v}_{n-1})\|^2_{\Gamma^*_\epsilon}\le k_d^2(1+\frac{k}{4})^2\frac{|\Gamma||\Omega|}{|Y|}=C$, therefore, we arrive at
		\begin{align*}
			&\left|\langle \frac{dv_n(t)} {dt}, \theta\rangle_{\Omega_\epsilon^p}\right|\leq (M\|\nabla v_i\|_{\Omega_\epsilon^p}+\|\textbf{q}_iv_i\|_{\Omega_\epsilon^p}+C(1+\|g_{i-1}\|_{\partial\Omega_{in}}+\|h_{i-1}\|_{\partial\Omega_{out}}))\|\theta\|_{H^{1,2}(\Omega_\epsilon^p)}\\
			\implies& \left\lVert\frac{dv_n} {dt}\right\rVert_{L^2(S ; H^{1,2}(\Omega_\epsilon^p)^*)}\leq C.
		\end{align*}
		Finally, we get the following estimate
		\begin{align}
			\|v_n(t)\|_{\Omega_\epsilon^p}+\|\nabla v_n\|_{(\Omega_\epsilon^p)^t}+\left\lVert\frac{dv_n} {dt}\right\rVert_{L^2(S ; H^{1,2}(\Omega_\epsilon^p)^*)}\leq C. \notag
		\end{align}
		Hence, $v_n\rightarrow v$ strongly in $C([0,T];H^s(\Omega_\epsilon^p)^*)\cap L^2((0,T); H^s(\Omega_\epsilon^p))$ for $s\in(0,1)$. In particular, $v_n\rightarrow v$ strongly in $L^2(\Gamma^*_\epsilon)^t$. Since $f$ is Lipschitz, $f(v_n)\rightarrow f(v)$ strongly in $L^2(\Gamma^*_\epsilon)^t$ and pointwise in $(\Gamma^*_\epsilon)^t$. We pass the limit as $n\rightarrow\infty$ in 
		\begin{align*}
			\int_{0}^{t}\langle\frac{dv_n} {dt}, \theta\rangle_{\Omega_\epsilon^p}dt+\int_{0}^{t}b(\bar{v}_n, \theta)dt=\int_{0}^{t} \langle f(\bar{v}_{n-1}), \theta\rangle_{\Gamma^*_\epsilon}dt-\int_{0}^{t}\langle g_{i-1},\theta\rangle_{\partial\Omega_{in}}dt-\int_{0}^{t}\langle h_{i-1},\theta\rangle_{\partial\Omega_{out}}dt, 
		\end{align*} 
		and get $v$ as a solution of
		\begin{align*}
			\int_{0}^{t}\langle \frac{dv(t)} {dt}, \theta\rangle_{\Omega_\epsilon^p}dt+\int_{0}^{t}b(v, \theta)dt=\int_{0}^{t} \langle f(v), \theta\rangle_{\Gamma^*_\epsilon}dt-\int_{0}^{t}\langle g,\theta\rangle_{\partial\Omega_{in}}-\int_{0}^{t}\langle h,\theta\rangle_{\partial\Omega_{out}}, 
		\end{align*}
		for all $\theta\in H^{1,2}(\Omega_\epsilon^p)$. The proof to establish the existence of $u_{\epsilon\delta}\in \mathcal{U}_\epsilon$ for \eqref{eqn:M1RWF} follows the same line of arguments as we did for $v_{\epsilon\delta}$. Here we apply the time discretization to the equation \eqref{eqn:M1RWF} with $u_{\epsilon\delta}(0, x)=u_0(x)$ and use Rothe's method and proceed as above.
		
		% \subsubsection{Uniqueness}
		\textbf{Uniqueness: }If possible, suppose $(\textbf{q}^1_\epsilon, u^1_{\epsilon\delta}, v^1_{\epsilon\delta}, w^1_{\epsilon\delta})$ and $(\textbf{q}^2_\epsilon, u^2_{\epsilon\delta}, v^2_{\epsilon\delta}, w^2_{\epsilon\delta})$ are two solutions of the regularized problem $\eqref{eqn:vrw}-\eqref{eqn:M12RWF}$. Let $\textbf{Q}_\epsilon=\textbf{q}^1_\epsilon-\textbf{q}^2_\epsilon\ge 0$,  $W_{\epsilon\delta}=w^1_{\epsilon\delta}-w^2_{\epsilon\delta}\ge 0$, $V_{\epsilon\delta}=v^1_{\epsilon\delta}-v^2_{\epsilon\delta}\ge 0$ and $U_{\epsilon\delta}=u^1_{\epsilon\delta}-u^2_{\epsilon\delta}\ge 0$.\\
		We choose the test function $\psi=\chi_{(0,t)}\textbf{Q}_\epsilon$ in \eqref{eqn:vrw} and write it for $\textbf{Q}_\epsilon$ to deduce
		\begin{align*}
			\|\textbf{Q}_\epsilon(t)\|^2_{\Omega_\epsilon^p}+\epsilon^2\mu\|\nabla\textbf{Q}_\epsilon\|_{(\Omega_\epsilon^p)^t}=0.
		\end{align*}
		Then application of the  Gronwall's inequality yields
		\begin{align*}
			\|\textbf{Q}_\epsilon(t)\|^2_{\Omega_\epsilon^p}=0\implies \textbf{q}^1_\epsilon(t)=\textbf{q}^1_\epsilon(t)\text{ for a.e. } t\in S.
		\end{align*}
		We write \eqref{eqn:M12RWF} for $W_{\epsilon\delta}$ and use $\eta=\chi_{(0,t)}W_{\epsilon\delta}$ to obtain
		\begin{align*}
			&\frac{\partial}{\partial t}\|W_{\epsilon\delta}(s)\|^2_{\Gamma_\epsilon^*}\leq 2k_dL_R\langle (U_{\epsilon\delta}+V_{\epsilon\delta}), W_{\epsilon\delta} \rangle_{\Gamma^*_\epsilon}.
		\end{align*}
		Applying young's inequality and integrating both sides w.r.t. $t$, we have
		\begin{align}
			\|W_{\epsilon\delta}(t)\|^2_{\Gamma^*_\epsilon}&\le k_d^2L_R^2\|U_{\epsilon\delta}+V_{\epsilon\delta}\|^2_{(\Gamma^*_\epsilon)^t}+\int_{0}^{t}\|W_{\epsilon\delta}(s)\|^2_{\Gamma^*_\epsilon}ds.\notag\\
			\Longrightarrow \|W_{\epsilon\delta}(t)\|^2_{\Gamma^*_\epsilon}&\le e^Tk_d^2L_R^2\|U_{\epsilon\delta}+V_{\epsilon\delta}\|^2_{(\Gamma^*_\epsilon)^t}\text{ by Gronwall's inequality. } \label{eqn:RE}
		\end{align}
		%  Gronwall's inequality gives
		% \begin{align}
			% 	\|W_{\epsilon\delta}(t)\|^2_{\Gamma^*_\epsilon}\le e^Tk_d^2L_R^2\|U_{\epsilon\delta}+V_{\epsilon\delta}\|^2_{\Gamma^*_\epsilon}.\label{eqn:RE}
			% \end{align}
		Now, writing \eqref{eqn:M1RWF} and \eqref{eqn:M2RWF} for $U_{\epsilon\delta}$ and $V_{\epsilon\delta}$ and adding them both, we get
		\begin{align*}
			\left\langle \frac{\partial}{\partial t}(U_{\epsilon\delta}+V_{\epsilon\delta}), \phi \right\rangle_{\Omega_\epsilon^p} + \langle \bar{D}_1^\epsilon\nabla U_{\epsilon\delta}+ \bar{D}_2^\epsilon\nabla V_{\epsilon\delta}-\textbf{q}_\epsilon(U_{\epsilon\delta}+V_{\epsilon\delta}), \nabla \phi\rangle_{\Omega_\epsilon^p}\le 2 \left|\left\langle \frac{\partial W_{\epsilon\delta}}{\partial t}, \phi\right\rangle_{\Gamma^*_\epsilon}\right|,
		\end{align*}
		since the uniqueness of $\textbf{q}_\epsilon$ is already proved. Now  we put $P(t)=U_{\epsilon\delta}(t)+V_{\epsilon\delta}(t)$ and $\phi=U_{\epsilon\delta}+V_{\epsilon\delta}$. Then relying on (\textbf{A6}) and \eqref{eqn:v} we derive
		\begin{align*}
			&\frac{\partial}{\partial t}\|P(t)\|^2_{\Omega_\epsilon^p}+2\alpha \|\nabla P(t)|\|^2_{\Omega_\epsilon^p}\le 4 \left|\left\langle \frac{\partial W_{\epsilon\delta}}{\partial t}, P(t)\right\rangle_{\Gamma^*_\epsilon}\right|\\
			\Rightarrow &\|P(t)\|^2_{\Omega_\epsilon^p}+2\alpha\|\nabla P\|^2_{(\Omega_\epsilon^p)^t}\le 4\left|\left\langle \frac{\partial W_{\epsilon\delta}}{\partial t}, P\right\rangle_{(\Gamma^*_\epsilon)^t}\right|, \text{ by integrating both sides}.
		\end{align*}
		Replacing $\eta$ by $P(t)$ in \eqref{eqn:M12RWF}, we get
		\begin{align*}
			\left|\left\langle \frac{\partial W_{\epsilon\delta}}{\partial t}, P\right\rangle_{(\Gamma^*_\epsilon)^t}\right|\le k_dL_R\| P\|^2_{(\Gamma^*_\epsilon)^t}.
		\end{align*}
		Now, applying trace inequality \eqref{eqn:TI2} and up on simplification, we can write
		\begin{align*}
			&\|P(t)\|^2_{\Omega_\epsilon^p}+2\alpha\|\nabla P\|^2_{(\Omega_\epsilon^p)^t}\le 4k_dL_RC^*\|P\|^2_{(\Omega_\epsilon^p)^t}+8k_dL_R\left[\frac{\bar{C}^2}{4\mu}\|P\|^2_{(\Omega_\epsilon^p)^t}+\mu\|\nabla P\|^2_{(\Omega_\epsilon^p)^t}\right]\\
			% \end{align*}
		% Inserting $\mu=\frac{\alpha}{2k_dL_R}$ gives
		% \begin{align*}
			&\Rightarrow\|P(t)\|^2_{\Omega_\epsilon^p}\le (4k_dL_RC^*+\frac{2\bar{C}^2k_dL_R}{\mu})\|P\|^2_{(\Omega_\epsilon^p)^t},\text{ by putting $\mu=\frac{\alpha}{4k_dL_R}$}.
		\end{align*}
		Gronwall's inequality gives
		\begin{align*}
			\|P(t)\|^2_{\Omega_\epsilon^p}=0\implies u^1_{\epsilon\delta}(t)=u^2_{\epsilon\delta}(t) \text{ and } v^1_{\epsilon\delta}(t)=v^2_{\epsilon\delta}(t) \text{ for a.e. } t\in S.
		\end{align*}
		From \eqref{eqn:RE}, we conclude that
		\begin{align*}
			\|W_{\epsilon\delta}(t)\|^2_{\Gamma^*_\epsilon}=0\implies w^1_{\epsilon\delta}(t)=w^2_{\epsilon\delta}(t) \text{ for a.e. } t\in S.
		\end{align*}
		Hence, the solution is unique.
	\end{proof}
	%\section{Iterative Limits}
	\section{Proof of the Theorem \ref{thm:1}}
	%\subsection{First $\delta\rightarrow 0$ then $\epsilon\rightarrow 0$}
	\textbf{$\delta\rightarrow 0$ part}:
	We prove Theorem \ref{thm:1} by using Lemma \ref{lemma:M1LB}, which gives the necessary estimates to pass the regularization parameter to zero. Now, by standard compactness arguments, we can extract a subsequence $(u_{\epsilon\delta}, v_{\epsilon\delta}, w_{\epsilon\delta}, z_{\epsilon\delta})$ and pass to the limit as $\delta\rightarrow 0$ to get the limit function $(u_\epsilon, v_\epsilon, w_\epsilon, z_\epsilon)$ is indeed a solution of the problem $(\mathcal{P}_\epsilon)$. So, we can write
	\begin{itemize}
		\item[(i)] $u_{\epsilon\delta}\rightharpoonup u_\epsilon$ weakly in $L^2(S; H^{1,2}(\Omega_\epsilon^p))$,\quad (ii) $\frac{\partial u_{\epsilon\delta}}{\partial t} \rightharpoonup \frac{\partial u_\epsilon}{\partial t}$ weakly in $L^2(S ; H^{1,2}(\Omega_\epsilon^p)^*)$,
		\item[(iii)]	$v_{\epsilon\delta}\rightharpoonup v_\epsilon$ weakly in $L^2(S; H^{1,2}(\Omega_\epsilon^p))$,\quad (iv) $\frac{\partial v_{\epsilon\delta}}{\partial t} \rightharpoonup \frac{\partial v_\epsilon}{\partial t}$ weakly in $L^2(S; H^{1,2}(\Omega_\epsilon^p)^*)$,
		\item[(v)]$w_{\epsilon\delta}\rightharpoonup w_\epsilon$ weakly in $L^2(S\times\Gamma^*_\epsilon)$,\quad\quad\; (vi) $\frac{\partial w_{\epsilon\delta}}{\partial t} \rightharpoonup \frac{\partial w_\epsilon}{\partial t}$ weakly in $L^2(S\times{\Gamma^*_\epsilon})$,
		\item[(vii)]$z_{\epsilon\delta}\overset{*}{\rightharpoonup} z_\epsilon$ weakly* in $L^{\infty}(S\times{\Gamma^*_\epsilon})$.
	\end{itemize}
	
	By Corollary 4 and Lemma 9 of \cite{simon1986compact} we obtain, for $s\in (0, 1)$, 
	\begin{align*}
		&u_{\epsilon\delta}\rightarrow u_\epsilon\text{ strongly in } L^2((0,T); L^2(\Omega_\epsilon^p)) ,\\
		&u_{\epsilon\delta}\rightarrow u_\epsilon\text{ strongly in } L^2((0,T); H^{s,2}(\Omega_\epsilon^p))\cap C([0,T]; H^{-s,2}(\Omega_\epsilon^p)),\\
		&v_{\epsilon\delta}\rightarrow v_\epsilon\text{ strongly in } L^2((0,T); L^2(\Omega_\epsilon^p)) \; \text{ and }\\
		& v_{\epsilon\delta}\rightarrow v_\epsilon\text{ strongly in } L^2((0,T); H^{s,2}(\Omega_\epsilon^p))\cap C([0,T]; H^{-s,2}(\Omega_\epsilon^p)).
	\end{align*}
	Then, by trace theorem (cf. Satz 8.7 of \cite{wloka1982partielle}), we have
	\begin{align*}
		&u_{\epsilon\delta}\rightarrow u_\epsilon\text{ strongly in }L^2({\Gamma^*_\epsilon})^t, v_{\epsilon\delta}\rightarrow v_\epsilon\text{ strongly in }L^2({\Gamma^*_\epsilon})^t.
	\end{align*}
	Since $R$ is Lipschitz, $R(u_{\epsilon\delta}, v_{\epsilon\delta}) \rightarrow R(u_\epsilon, v_\epsilon)$ strongly in $L^2({\Gamma^*_\epsilon}^t)$ and pointwise a.e. in $(\Gamma^*_\epsilon)^t$. Now we follow the same arguments given in Theorem 2.21 of \cite{van2004crystal} to derive the system of equations $\eqref{eqn:M11}-\eqref{eqn:MN3}$.
	\begin{lemma}\label{lemma:E1}
		There exists an extension of the solution $(\textbf{q}_\epsilon, u_\epsilon,v_\epsilon)$, still denoted by the same symbol, of $(\mathcal{P}_\epsilon)$ into all of $S\times\Omega$ which satisfies
		% The solutions $(u_\epsilon,v_\epsilon)$ of $(P_\epsilon)$ can be extended to all of  $S\times\Omega$. Then there exists a positive constant $C$ independent of  $\epsilon$ such that
		\begin{align}\label{eqn:EPB}
			&\sup_{\epsilon>0}\left( \|\textbf{q}_\epsilon\|_{L^\infty(S; \textbf{L}^2(\Omega))}+\epsilon\sqrt{\mu}\|\nabla\textbf{q}_\epsilon\|_{L^2(S\times\Omega)^{n\times n}}+\|\partial_t \textbf{q}_\epsilon\|_{L^2(S; \textbf{H}^{-1,2}_{div}(\Omega))}+\|u_{\epsilon}\|_{(\Omega)^T}+\|\nabla u_{\epsilon}\|_{(\Omega)^T}\right.\notag\\
			&+\left\lVert \chi_\epsilon\frac{\partial u_{\epsilon} }{\partial t}\right\rVert_{L^{2}(S; H^{1,2}(\Omega)^{*})}+\|v_{\epsilon}\|_{(\Omega)^T}
			\left. +\|\nabla v_{\epsilon}\|_{(\Omega)^T}+\left\lVert \chi_\epsilon\frac{\partial v_{\epsilon} }{\partial t}\right\rVert_{L^2(S; H^{1,2}(\Omega)^*)}+\sup_{t\in S}\|P_\epsilon(t)\|_{L^2_0(\Omega)}\right) \le C<\infty, 
		\end{align}where $C$ is independent of $\epsilon$. 
	\end{lemma}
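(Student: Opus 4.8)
The plan is to handle the five components of the solution separately, since the only real difficulty here is that the perforated domain $\Omega_\epsilon^p$ itself depends on $\epsilon$; once an $\epsilon$-uniform extension (or a uniform restriction operator for the dual bounds) is in hand, the estimates in \eqref{eqn:EPB} follow by composing that operator with the a-priori bounds of Lemma \ref{lemma:M1LB}. The guiding principle throughout is that every extension constant must be controlled by the geometry of the reference cell $Y$ alone, so that it does not deteriorate as $\epsilon\to0$.

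For the concentrations I would invoke the classical uniform Sobolev extension operator for periodically perforated domains (of Cioranescu--Saint Jean Paulin / Acerbi et al.\ type). After scaling from $Y$ to the $\epsilon$-cells this yields a linear $\mathcal{E}_\epsilon\colon H^{1,2}(\Omega_\epsilon^p)\to H^{1,2}(\Omega)$ with $\mathcal{E}_\epsilon\zeta=\zeta$ on $\Omega_\epsilon^p$ and $\|\mathcal{E}_\epsilon\zeta\|_{L^2(\Omega)}+\|\nabla\mathcal{E}_\epsilon\zeta\|_{L^2(\Omega)}\le C(\|\zeta\|_{L^2(\Omega_\epsilon^p)}+\|\nabla\zeta\|_{L^2(\Omega_\epsilon^p)})$, $C$ independent of $\epsilon$. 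Applying $\mathcal{E}_\epsilon$ to $u_\epsilon(t)$ and $v_\epsilon(t)$ for a.e.\ $t$ and integrating in time, the bounds on $\|u_\epsilon\|_{(\Omega)^T}$, $\|\nabla u_\epsilon\|_{(\Omega)^T}$ (and likewise for $v_\epsilon$) drop out of parts (ii)--(iii) of Lemma \ref{lemma:M1LB}.

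For the velocity the no-slip condition $\textbf{q}_\epsilon=0$ on $\partial\Omega_\epsilon^p$ lets me simply extend by zero into the solid inclusions: the zero extension lies in $\textbf{H}^{1,2}_0(\Omega)$, is still divergence free, and preserves the norms exactly, so $\|\textbf{q}_\epsilon\|_{L^\infty(S;\textbf{L}^2(\Omega))}$ and $\epsilon\sqrt{\mu}\|\nabla\textbf{q}_\epsilon\|_{L^2(S\times\Omega)^{n\times n}}$ inherit the bounds of Lemma \ref{lemma:M1LB}(i). For the time derivatives I would not extend $\partial_t u_\epsilon$ itself but estimate $\chi_\epsilon\partial_t u_\epsilon$ by duality: for $\phi\in H^{1,2}(\Omega)$ one has $|\langle\chi_\epsilon\partial_t u_\epsilon,\phi\rangle|=|\langle\partial_t u_\epsilon,\phi|_{\Omega_\epsilon^p}\rangle|\le\|\partial_t u_\epsilon\|_{H^{1,2}(\Omega_\epsilon^p)^*}\|\phi\|_{H^{1,2}(\Omega)}$, which gives the stated bound from Lemma \ref{lemma:M1LB}(v); the bound on $\partial_t\textbf{q}_\epsilon$ is obtained analogously, testing the zero-extended identity \eqref{eqn:vrw} with divergence-free fields through the restriction operator below and using $\epsilon\sqrt{\mu}\|\nabla\textbf{q}_\epsilon\|\le C$.

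The delicate point, as always in the homogenization of Stokes flow in perforated media, is the pressure. The estimate \eqref{eqn:pe} controls only the gradient of $P_\epsilon$ in the pore dual $H^{-1,2}(\Omega_\epsilon^p)^n$, whereas \eqref{eqn:EPB} demands a genuine $L^2_0(\Omega)$ bound on an extension. My plan is to define the pressure extension by duality through Tartar's restriction operator $R_\epsilon\colon\textbf{H}^{1,2}_0(\Omega)\to\textbf{H}^{1,2}_0(\Omega_\epsilon^p)$, which is uniformly bounded and divergence preserving; setting $\langle\nabla\widetilde{P}_\epsilon,\psi\rangle_\Omega:=\langle\nabla P_\epsilon,R_\epsilon\psi\rangle_{\Omega_\epsilon^p}$ transfers \eqref{eqn:pe} into an $\epsilon$-uniform bound on $\|\nabla\widetilde{P}_\epsilon\|_{H^{-1,2}(\Omega)^n}$, after which the Ne\v{c}as (Bogovskii) inequality $\|\widetilde{P}_\epsilon\|_{L^2_0(\Omega)}\le C\|\nabla\widetilde{P}_\epsilon\|_{H^{-1,2}(\Omega)^n}$ closes the estimate. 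Verifying the $\epsilon$-independence of the norm of $R_\epsilon$ --- equivalently, the uniform inf--sup constant for the perforated geometry --- is the main obstacle, and is exactly where the modified extension operator of this paper is needed; everything else is a routine assembly of the per-cell estimates with Lemma \ref{lemma:M1LB}.
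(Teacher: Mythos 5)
Your decomposition is the same as the paper's: the reflection/scaling extension operator (Lemma \ref{lemma:EL1}) for $u_\epsilon,v_\epsilon$, zero extension for $\textbf{q}_\epsilon$ via the no-slip condition, and a dual-space argument for the time derivatives; for the latter your one-line observation that the restriction $H^{1,2}(\Omega)\to H^{1,2}(\Omega_\epsilon^p)$ has norm at most one, so that $\|\chi_\epsilon\partial_t u_\epsilon\|_{L^2(S;H^{1,2}(\Omega)^*)}\le\|\partial_t u_\epsilon\|_{L^2(S;H^{1,2}(\Omega_\epsilon^p)^*)}$, is actually cleaner than the paper's route, which re-estimates every term of the variational formulation of \eqref{eqn:M11} against test functions $\phi(t)\psi(x)$ before taking suprema. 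Both give the same bound, so this part is fine.

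The one step where your argument, as literally written, does not close is the pressure. You propose to define $\langle\nabla\widetilde{P}_\epsilon,\psi\rangle_\Omega:=\langle\nabla P_\epsilon,R_\epsilon\psi\rangle_{\Omega_\epsilon^p}$ and then ``transfer'' \eqref{eqn:pe} through the operator norm of $R_\epsilon$. But the uniform bound for Tartar's restriction operator is $\|R_\epsilon\psi\|_{L^2(\Omega_\epsilon^p)}+\epsilon\|\nabla R_\epsilon\psi\|_{L^2(\Omega_\epsilon^p)}\le C(\|\psi\|_{L^2(\Omega)}+\epsilon\|\nabla\psi\|_{L^2(\Omega)})$, i.e.\ $\|\nabla R_\epsilon\psi\|$ is only $O(\epsilon^{-1})$ in the unweighted $H^{1,2}_0(\Omega_\epsilon^p)$ norm, so pairing \eqref{eqn:pe} with $\|R_\epsilon\psi\|_{\textbf{H}^{1,2}_0(\Omega_\epsilon^p)}$ by Cauchy--Schwarz loses a factor $\epsilon^{-1}$. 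The correct route --- and the one in Lemma 5.1 of the reference the paper defers to --- is to insert $R_\epsilon\psi$ directly into the identity \eqref{eqn:p1}: there the viscous term carries the prefactor $\epsilon^2\mu$, and writing it as $(\epsilon\|\nabla\textbf{q}_\epsilon\|)(\epsilon\|\nabla R_\epsilon\psi\|)$ the two $\epsilon$-weighted bounds combine to give an $\epsilon$-uniform estimate $|\langle\nabla\widetilde{P}_\epsilon(t),\psi\rangle_\Omega|\le C\|\psi\|_{\textbf{H}^{1,2}_0(\Omega)}$, after which Ne\v{c}as's inequality yields $\sup_{t\in S}\|\widetilde{P}_\epsilon(t)\|_{L^2_0(\Omega)}\le C$ as you intend. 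Also, the uniform boundedness of $R_\epsilon$ is classical (it is quoted in the paper's appendix); the ``modified extension operator'' of this paper refers to the reflection construction for the concentrations, not to the pressure step.
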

	\begin{proof}
		We use Lemma \ref{lemma:EL1} and Lemma \ref{lemma:M1LB} to get the inequalities
		\begin{align*}
			&\|u_{\epsilon}\|_{(\Omega)^T}+\|\nabla u_{\epsilon}\|_{(\Omega)^T}+\|v_{\epsilon}\|_{(\Omega)^T}+\|\nabla v_{\epsilon}\|_{(\Omega)^T}\le C \left \{ \|u_{\epsilon}\|_{L^2(S;H^{1,2}(\Omega^p_\epsilon))}+\|v_{\epsilon}\|_{L^2(S; H^{1,2}(\Omega^p_\epsilon))} \right\}=C
		\end{align*}
		and 
		\begin{align*}
			\|\textbf{q}_\epsilon\|_{L^\infty(S; \textbf{L}^2(\Omega))}+\epsilon\sqrt{\mu}\|\nabla\textbf{q}_\epsilon\|_{L^2(S\times\Omega)^{n\times n}}+\|\partial_t \textbf{q}_\epsilon\|_{L^2(S; \textbf{H}^{-1,2}_{div}(\Omega))}\le C.
		\end{align*}
		The variational formulation of \eqref{eqn:M11} is given by
		\begin{align*}
			&\int_{0}^{T}\langle \chi_\epsilon\frac{\partial u_\epsilon (t)}{\partial t}, \phi(t)\psi\rangle_{H^{1,2}(\Omega)^*\times H^{1,2}(\Omega)} dt+\int_{0}^{T}\int_{\Omega}\chi_\epsilon(x) \phi(t) [D_1(x,\frac{x}{\epsilon})\nabla u_\epsilon-\textbf{q}_\epsilon u_\epsilon] \nabla\psi dxdt\\
			&+\epsilon\int_{0}^{T}\int_{\Gamma_\epsilon^*}\frac{\partial w_\epsilon}{\partial t}\phi(t)\psi(x)d\sigma_xdt+\int_{0}^{T}\int_{\partial\Omega_{in}}d\phi\psi d\sigma_xdt+\int_{0}^{T}\int_{\partial\Omega_{out}}e\phi\psi d\sigma_xdt=0,
		\end{align*}
		for all $\phi\in H^{1,2}_0(S)$ and $\psi\in H^{1,2}(\Omega)$. Then, we can simplify it as
		\begin{align*}
			\left| \int_{0}^{T}\langle \chi_\epsilon\frac{\partial u_\epsilon (t)}{\partial t}, \phi(t)\psi\rangle_{H^{1,2}(\Omega)^*\times H^{1,2}(\Omega)} dt\right|\le C_{12}+(C+\frac{1}{2})\|\phi\|^2_{L^2(S)}\|\psi\|^2_{H^{1,2}(\Omega)},
		\end{align*}
		where $C_{12}=\frac{1}{2}\left(M\|\nabla u_\epsilon\|^2_{L^2(S\times\Omega)}+C+\left\lVert\frac{\partial w_\epsilon}{\partial  t}\right\rVert^2_{L^2(S\times\Gamma_\epsilon^*)}+\|d\|^2_{L^2(S\times\partial\Omega_{in})}+\|e\|^2_{L^2(S\times\partial\Omega_{out})}\right)$. We choose $\phi\in H^{1,2}_0(S)$, therefore $\|\phi\|_{L^2(S)}\le C_0\|\phi\|_{H^{1,2}_0(S)}\implies \left\lVert\frac{\phi}{C_0}\right\rVert_{L^2(S)}\le\|\phi\|_{H^{1,2}_0(S)}$, where $C_0$ is the embedding constant. Now, taking supremum on both sides of the above inequality, we get
		\begin{align*}
			&C_0\sup_{\begin{subarray} {c}\|\frac{\phi}{C_0}\|_{L^2(S)}\le 1 \\ \|\psi\|_{H^{1,2}(\Omega)\le 1}\end{subarray}} \left| \int_{0}^{T}\langle \chi_\epsilon\frac{\partial u_\epsilon (t)}{\partial t}, \frac{\phi(t)}{C_0}\psi\rangle_{H^{1,2}(\Omega)^*\times H^{1,2}(\Omega)} dt\right|\\ &\le C_{12} + C_0^2(C+\frac{1}{2})\sup_{\begin{subarray} {c}\|\frac{\phi}{C_0}\|_{L^2(S)}\le 1 \\ \|\psi\|_{H^{1,2}(\Omega)\le 1}\end{subarray}} \left\lVert\frac{\phi}{C_0}\right\rVert^2_{L^2(S)}\|\psi\|^2_{H^{1,2}(\Omega)}
			\implies \left\lVert \chi_\epsilon\frac{\partial u_\epsilon (t)}{\partial t}\right\rVert_{L^{2}(S; H^{1,2}(\Omega)^{*})}\le C.
		\end{align*}
		Proceeding in the same way with \eqref{eqn:M21}, we will have $\left\lVert \chi_\epsilon\frac{\partial v_\epsilon (t)}{\partial t}\right\rVert_{L^{2}(S; H^{1,2}(\Omega)^{*})}\le C$.
		% \begin{align*}
			We follow the same line of arguments as given in Lemma 5.1 of \cite{bavnas2017homogenization} and use \eqref{eqn:pe} to extend the pressure $P_\epsilon$ to the whole domain $\Omega$, i.e.,
			\begin{align*}
				\sup_{t\in S}\|P_\epsilon(t)\|_{L^2_0(\Omega)}\le C.
			\end{align*}
			% 	\left\lVert \chi_\epsilon\frac{\partial v_\epsilon (t)}{\partial t}\right\rVert_{L^{2}(S; H^{1,2}(\Omega)^{*})}\le C.
			% \end{align*}
	\end{proof}
	\begin{lemma}\label{lemma:APB}
		The a-priori estimates \eqref{eqn:EPB} and \eqref{eqn:UB} lead to the following convergence results:
		\begin{itemize}
			\item[(i)] $\textbf{q}_\epsilon\overset{2}{\rightharpoonup} \textbf{q}$ in $L^2(S\times\Omega; H^{1,2}_{per}(Y))^n$,\; (ii) $\epsilon\nabla_x\textbf{q}_\epsilon \overset{2}{\rightharpoonup}\nabla_y \textbf{q}$ in $L^2(S\times\Omega; H^{1,2}_{per}(Y))^{n\times n}$,
			\item[(iii)] $u_{\epsilon}\rightharpoonup u$  in $L^2(S; H^{1,2}(\Omega))$,\quad (iv) $\frac{\partial u_{\epsilon}}{\partial t} \rightharpoonup \frac{\partial u}{\partial t}$  in $L^2(S ; H^{1,2}(\Omega)^*)$,
			\item[(v)]	$v_{\epsilon}\rightharpoonup v$  in $L^2(S; H^{1,2}(\Omega))$,\quad (vi) $\frac{\partial v_{\epsilon}}{\partial t} \rightharpoonup \frac{\partial v}{\partial t}$  in $L^2(S; H^{1,2}(\Omega)^*)$,
			\item[(vii)] $u_\epsilon\rightarrow u$ in $L^2(S\times\Omega)$, \quad\quad (viii)  $v_\epsilon\rightarrow v$ in $L^2(S\times\Omega)$, 
			\item[(ix)]  $u_{\epsilon}\rightarrow u$ in $L^2(S\times\Gamma_\epsilon^*)$, \quad\quad (x)  $v_{\epsilon}\rightarrow v$ in $L^2(S\times\Gamma_\epsilon^*)$,
			\item[(xi)] there exists $u_1\in L^2(S\times\Omega ; H^{1,2}_{per}(Y)/\mathbb{R})$ such that $u_\epsilon\overset{2}{\rightharpoonup}u$ and $\nabla u_\epsilon \overset{2}{\rightharpoonup} \nabla_x u + \nabla_y u_1$,
			\item[(xii)] there exists $v_1\in L^2(S\times\Omega ; H^{1,2}_{per}(Y)/\mathbb{R})$ such that $v_\epsilon\overset{2}{\rightharpoonup}v$ and $\nabla v_\epsilon \overset{2}{\rightharpoonup} \nabla_x v + \nabla_y v_1$,
			\item[(xiii)] $w_\epsilon\overset{2}{\rightharpoonup}w$ in $L^2(S\times\Omega\times\Gamma),$ \quad (xiv) $\frac{\partial w_{\epsilon}}{\partial t}\overset{2}{\rightharpoonup} \frac{\partial w}{\partial t}$ in $L^2(S\times\Omega\times\Gamma)$,
			\item[(xv)] $z_\epsilon\overset{2}{\rightharpoonup}z$ in $L^2(S\times\Omega\times\Gamma), $ \quad (xvi) $P_\epsilon\overset{2}{\rightharpoonup}P$ in $L^2(S\times\Omega\times Y)$.
		\end{itemize}	
	\end{lemma}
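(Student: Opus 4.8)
The plan is to organise the proof around the three modes of convergence appearing in the list — weak, two-scale (bulk), and strong/surface — extracting a subsequence at each stage and finally relabelling a single common subsequence. Everything is driven by the uniform bounds collected in \eqref{eqn:EPB} and \eqref{eqn:UB} via Lemma \ref{lemma:E1} and Lemma \ref{lemma:M1LB}; no new estimate is needed, only the correct compactness tool for each item. First I would dispatch the weak convergences (iii)--(vi). The extension bounds place $(u_\epsilon)$ and $(v_\epsilon)$ in bounded subsets of the reflexive space $L^2(S; H^{1,2}(\Omega))$ and their (extended) time derivatives in bounded subsets of $L^2(S; H^{1,2}(\Omega)^*)$, so Banach--Alaoglu yields weakly convergent subsequences. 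The identifications $\frac{\partial u_\epsilon}{\partial t}\rightharpoonup\frac{\partial u}{\partial t}$ and $\frac{\partial v_\epsilon}{\partial t}\rightharpoonup\frac{\partial v}{\partial t}$ are then automatic, since distributional differentiation is continuous for the weak topology. The bound $0\le z_\epsilon\le 1$ gives, after a further extraction, a weak-$*$ limit in $L^\infty$ that feeds the surface analysis below.

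Next I would treat the bulk two-scale statements (i)--(ii), (xi)--(xii) and (xvi) by invoking the fundamental two-scale compactness theorems (Lemma \ref{thm:TS2} and Lemma \ref{thm:TS3}). The $L^2$-bound on $(u_\epsilon)$ together with the $L^2$-bound on $(\nabla u_\epsilon)$ produces a corrector $u_1\in L^2(S\times\Omega; H^{1,2}_{per}(Y)/\mathbb{R})$ with $\nabla u_\epsilon\overset{2}{\rightharpoonup}\nabla_x u+\nabla_y u_1$, which is exactly (xi); (xii) is identical with $v$. For the velocity, the uniform control of $\textbf{q}_\epsilon$ in $L^2$ and of $\epsilon\nabla\textbf{q}_\epsilon$ (coming from $\sqrt{\mu}\,\epsilon\|\nabla\textbf{q}_\epsilon\|_{(L^2(S\times\Omega))^{n\times n}}\le C$) give (i) and its companion (ii), the two-scale limit of the scaled gradient being $\nabla_y\textbf{q}$. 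The $L^2$-bound on the extended pressure $P_\epsilon$ yields (xvi).

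For the bulk strong convergences (vii)--(viii) I would use the Aubin--Lions--Simon compactness lemma exactly as the paper already does through \cite{simon1986compact}: the compact embedding $H^{1,2}(\Omega)\hookrightarrow\hookrightarrow L^2(\Omega)$ together with the bound on $\chi_\epsilon\frac{\partial u_\epsilon}{\partial t}$ in $L^2(S; H^{1,2}(\Omega)^*)$ forces $u_\epsilon\to u$ strongly in $L^2(S\times\Omega)$, and in fact in $L^2(S; H^{s,2}(\Omega))$ for $s\in(0,1)$. The surface statements (ix)--(x) then follow by a scaled trace argument: one writes $\|u_\epsilon-u\|^2_{(\Gamma_\epsilon^*)^T}$ with the $\epsilon$-weighted surface measure and bounds it by the bulk norm $\|u_\epsilon-u\|^2_{L^2(S; H^{s,2}(\Omega))}$ for some $s>\tfrac12$, whereupon the strong $H^{s,2}$-convergence annihilates the right-hand side. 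Finally the surface two-scale limits (xiii)--(xv) come from the boundary two-scale compactness theorem applied to $w_\epsilon$ and $\frac{\partial w_\epsilon}{\partial t}$, both bounded in $L^2((\Gamma_\epsilon^*)^T)$ by Lemma \ref{lemma:M1LB}(vi), and to the weak-$*$ bounded $z_\epsilon$.

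The delicate step — and the one I expect to be the main obstacle — is the passage to the oscillating interface $\Gamma_\epsilon^*$, namely items (ix)--(x) and the surface two-scale limits (xiii)--(xv). Since $|\Gamma_\epsilon^*|$ blows up like $\epsilon^{-1}$, the $\epsilon$-weighting in the surface inner product must be matched exactly against a trace inequality whose constant is uniform in $\epsilon$; securing that uniform constant, and the compatible surface two-scale compactness for functions supported only on $\Gamma_\epsilon^*$, is where the real work lies. By contrast, once the two-scale and Aubin--Lions--Simon machinery is in place, the weak and bulk parts are essentially bookkeeping, and the whole list reduces to a single diagonal extraction of a common subsequence carrying all the limits simultaneously.
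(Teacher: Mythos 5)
Your proposal follows essentially the same route as the paper: weak and two-scale compactness from the uniform bounds for (i)--(vi), (xi)--(xiv) and (xvi), the $\epsilon$-scaled trace inequality in $H^{\beta,2}(\Omega)$ with $\beta>\tfrac12$ for (ix)--(x), and the $L^\infty$ bound on $z_\epsilon$ combined with surface two-scale compactness for (xv). The one point to tighten is (vii)--(viii): since only $\chi_\epsilon\frac{\partial u_\epsilon}{\partial t}$ --- not the time derivative of the extension itself --- is bounded in $L^2(S;H^{1,2}(\Omega)^*)$, the classical Aubin--Lions--Simon lemma does not apply verbatim, and the paper instead invokes the perforated-domain compactness theorem of Meirmanov (Lemma \ref{lemma:1}), which is stated precisely for this weighted-derivative situation; you should appeal to that result (or prove an analogous statement) rather than to the standard lemma.
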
 
	\begin{proof}
		$(i)-(vi)$ and $(xvi)$ follows directly from the a-priori estimate \eqref{eqn:EPB} and Lemma \ref{thm:TS2}. The convergence $(vii)$ and $(viii)$ is a consequence of Lemma \ref{lemma:1} and the estimate \eqref{eqn:EPB}. To establish $(ix)$ and $(x)$, we use the compact embedding $H^{\beta^{'},2}(\Omega)\hookrightarrow\hookrightarrow H^{\beta,2}(\Omega)$ for $\beta\in(\frac{1}{2},1)$ and $0<\beta<\beta^{'}\le 1$. If we denote
		\begin{align*}
			U=\{u\in L^2(S; H^{1,2}(\Omega))\text{  and  } \frac{\partial u}{\partial t}\in L^2(S\times\Omega)\},
		\end{align*}
		then, by Lions-Aubin lemma for fixed $\epsilon$, $U\hookrightarrow\hookrightarrow L^2(S;H^{\beta,2}(\Omega))$. Application of the trace inequality \eqref{eqn:TI3} gives
		\begin{align*}
			\|u_\varepsilon-u\|_{L^2(S\times\Gamma^*_\varepsilon)}
			&\le C_0\|u_\varepsilon-u\|_{L^2(S;H^{\beta,2}(\Omega^p_\varepsilon))}\le C\|u_\varepsilon-u\|_{L^2(S;H^{\beta,2}(\Omega))}\rightarrow0\text{ as }\varepsilon\rightarrow 0.
		\end{align*}
		Similarly, $V=\{v\in L^2(S; H^{1,2}(\Omega))\text{  and  } \frac{\partial v}{\partial t}\in L^2(S\times\Omega)\}\hookrightarrow\hookrightarrow L^2(S;H^{\beta,2}(\Omega))$, therefore $\|v_\varepsilon-v\|_{L^2(S\times\Gamma^*_\varepsilon)}\le C \|u_\varepsilon-u\|_{L^2(S;H^{\beta,2}(\Omega))}\rightarrow0\text{ as }\varepsilon\rightarrow 0.$ The results $(xi)-(xiv)$ follows from Lemma \ref{thm:TS2} and Lemma \ref{thm:TS3}. We get $(xv)$ by using Lemma \ref{thm:TS3} and the following inequality:
		\begin{align*}
			\epsilon\int_{0}^{T}\int_{\Gamma_\epsilon^*}|z_\epsilon|^2d\sigma_xdt \le  T \|z_\epsilon\|_{L^\infty(S\times\Gamma_\epsilon^*)}\sup_{\epsilon>0}\epsilon|\Gamma_\epsilon^*|\le T\|z_\epsilon\|_{L^\infty(S\times\Gamma_\epsilon^*)}\frac{|\Gamma||\Omega|}{|Y|}<\infty.
		\end{align*}
	\end{proof} 
	\begin{lemma}\label{lemma:R2S}
		\begin{enumerate}
			\item [\textrm{(i)}] The nonlinear terms $\textbf{q}_\epsilon u_\epsilon$ and $\textbf{q}_\epsilon v_\epsilon$ two scale converges to $\textbf{q} u$ and $\textbf{q} v$ in $L^2(S\times\Omega\times Y),$ respectively. 
			\item [\textrm{(ii)}] The reaction rate term $\{R(u_\epsilon,v_\epsilon)\}$ is strongly convergent to $R(u,v)$ in\\ $L^2(S\times\Gamma_\epsilon^*)$. From this we can deduce $R(u_\epsilon,v_\epsilon)\overset{2}{\rightharpoonup} R(u,v)$ in $L^2(S\times\Omega\times\Gamma)$.
			\item [\textrm{(iii)}] $\{\mathcal{T}_\epsilon^b(w_\epsilon)\}$ is strongly convergent to $w$ in $L^2(S\times\Omega\times\Gamma)$.
		\end{enumerate}
	\end{lemma}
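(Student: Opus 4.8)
For the three assertions I would proceed in the order (i)–(ii)–(iii), the first two being convergences of products and compositions and the third the genuinely delicate strong–convergence statement.

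For (i) the idea is to exploit the mismatch of convergence modes: by Lemma \ref{lemma:APB}(i) the velocity two-scale converges, $\textbf{q}_\epsilon\overset{2}{\rightharpoonup}\textbf{q}$, while by Lemma \ref{lemma:APB}(vii) the concentration converges \emph{strongly}, $u_\epsilon\to u$ in $L^2(S\times\Omega)$, with limit independent of $y$. Fixing an admissible oscillating test function $\phi\in C_0^\infty(S\times\Omega; C_{per}^\infty(Y))$ I would split
\[
\int_{S\times\Omega}\textbf{q}_\epsilon u_\epsilon\,\phi(t,x,\tfrac{x}{\epsilon})=\int_{S\times\Omega}\textbf{q}_\epsilon (u_\epsilon-u)\,\phi(t,x,\tfrac{x}{\epsilon})+\int_{S\times\Omega}\textbf{q}_\epsilon\, u\,\phi(t,x,\tfrac{x}{\epsilon}).
\]
The first term is bounded by $\|\phi\|_{L^\infty}\|\textbf{q}_\epsilon\|_{(\Omega)^T}\|u_\epsilon-u\|_{(\Omega)^T}$ and vanishes by the uniform $L^2$-bound on $\textbf{q}_\epsilon$ and the strong convergence of $u_\epsilon$; in the second term $u\phi$ is itself an admissible oscillating test function, so the two-scale convergence of $\textbf{q}_\epsilon$ passes the limit to $\int_{S\times\Omega\times Y}\textbf{q}\,u\,\phi$. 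The uniform $L^4$-bounds of Lemma \ref{lemma:M1LB}(i),(iv) guarantee that $\textbf{q}_\epsilon u_\epsilon$ is itself bounded in $L^2$, so its two-scale limit is well defined. The identical computation with $v_\epsilon$ gives the claim for $\textbf{q}_\epsilon v_\epsilon$.

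For (ii) the rate $R$ is bounded by $k/4$ and globally Lipschitz on $[0,\infty)^2$ (extended by $0$), cf. \eqref{eqn:LL}; I would combine this with the strong surface convergences $u_\epsilon\to u$, $v_\epsilon\to v$ in $L^2(S\times\Gamma_\epsilon^*)$ supplied by Lemma \ref{lemma:APB}(ix)–(x). The Lipschitz estimate $\|R(u_\epsilon,v_\epsilon)-R(u,v)\|_{(\Gamma_\epsilon^*)^T}\le L_R\big(\|u_\epsilon-u\|_{(\Gamma_\epsilon^*)^T}+\|v_\epsilon-v\|_{(\Gamma_\epsilon^*)^T}\big)\to 0$ yields strong convergence on the oscillating surface, and since the limit $R(u,v)$ does not depend on $y$, this is equivalent to the two-scale statement $R(u_\epsilon,v_\epsilon)\overset{2}{\rightharpoonup}R(u,v)$ by Lemma \ref{thm:TS3}.

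The hard part is (iii). Here $w_\epsilon$ only solves the surface ODE \eqref{eqn:M12WF} and is two-scale (weakly) convergent by Lemma \ref{lemma:APB}(xiii), whereas strong convergence of the unfolded family is required. My plan is to unfold the ODE: since $\mathcal{T}_\epsilon^b$ commutes with $\partial_t$ and acts pointwise, it preserves the monotone inclusion, $\mathcal{T}_\epsilon^b z_\epsilon\in\psi(\mathcal{T}_\epsilon^b w_\epsilon)$, and the unfolded function $W_\epsilon:=\mathcal{T}_\epsilon^b w_\epsilon$ satisfies $\partial_t W_\epsilon=k_d(\mathcal{T}_\epsilon^b R(u_\epsilon,v_\epsilon)-\mathcal{T}_\epsilon^b z_\epsilon)$ with $W_\epsilon(0)=\mathcal{T}_\epsilon^b w_0\to w_0$. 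By (ii) and norm-preservation of unfolding, $\mathcal{T}_\epsilon^b R(u_\epsilon,v_\epsilon)\to R(u,v)$ strongly, while $\mathcal{T}_\epsilon^b z_\epsilon\rightharpoonup z$ only weakly; writing $w$ for the weak two-scale limit, it solves $\partial_t w=k_d(R(u,v)-z)$, $w(0)=w_0$. I would then test the equation for $W_\epsilon-w$ against $W_\epsilon-w$ on $\Omega\times\Gamma$. The obstacle is the coupling term $-k_d\langle\mathcal{T}_\epsilon^b z_\epsilon-z,\,W_\epsilon-w\rangle$: because $z_\epsilon$ converges only weakly, one cannot pass to the limit in the product $\langle z_\epsilon,w_\epsilon\rangle_{\Gamma_\epsilon^*}=\int(w_\epsilon)_+$ and identify $z\in\psi(w)$ without already controlling the strong convergence — a Minty-type circularity. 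I would resolve it by showing this monotone term is asymptotically non-negative, using weak lower semicontinuity of $w\mapsto\int w_+$ together with $0\le z\le 1$, which simultaneously forces $z\in\psi(w)$ and, after a Gronwall estimate driven by $\|\mathcal{T}_\epsilon^b R(u_\epsilon,v_\epsilon)-R(u,v)\|\to 0$ and $\|W_\epsilon(0)-w_0\|\to 0$, gives $\|W_\epsilon-w\|_{(\Omega\times\Gamma)^T}\to 0$; equivalently one extracts $\limsup_\epsilon\|W_\epsilon\|^2\le\|w\|^2$, which upgrades the weak convergence to strong. Getting the sign of the $z_\epsilon$–$w_\epsilon$ coupling right, decoupled from the circular assumption $z\in\psi(w)$, is where the real work lies.
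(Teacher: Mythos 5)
Your parts (i) and (ii) are correct and match the substance of the paper's argument: the paper routes (i) through the periodic unfolding operator (writing $\mathcal{T}_\epsilon(\textbf{q}_\epsilon u_\epsilon)=\mathcal{T}_\epsilon(\textbf{q}_\epsilon)\mathcal{T}_\epsilon(u_\epsilon)$ and splitting exactly as you do, with norm preservation replacing your direct Cauchy--Schwarz step), and it defers (ii) to an external lemma whose content is precisely your Lipschitz-plus-strong-trace-convergence computation. The only cosmetic point is that your appeal to Lemma \ref{thm:TS3} for upgrading strong surface convergence of a $y$-independent limit to two-scale convergence is the surface analogue of Lemma \ref{thm:TS1} rather than Lemma \ref{thm:TS3} itself, but the fact is standard and true.

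Part (iii) contains a genuine gap, and it is exactly the one you flag at the end without resolving. Testing the equation for $W_\epsilon-w$ against $W_\epsilon-w$ produces the term $-k_d\langle\mathcal{T}^b_\epsilon z_\epsilon-z,\,W_\epsilon-w\rangle$, whose sign you cannot control without already knowing $z\in\psi(w)$; your proposed repair via weak lower semicontinuity of $\int w_+$ is not carried out and does not obviously close, since $\langle z_\epsilon,w_\epsilon\rangle$ involves a product of two merely weakly convergent sequences. The way the paper (through its cited lemma, and explicitly in the regularized analogue, Lemma \ref{lemma:R2S1}(ii) with estimate \eqref{eqn:NE6}) avoids this is to never compare $W_\epsilon$ with the limit at all: one subtracts the unfolded ODEs \eqref{eqn:OU} for two indices $\epsilon_\mu$ and $\epsilon_m$ and tests with $\mathcal{T}^b_{\epsilon_\mu}w_{\epsilon_\mu}-\mathcal{T}^b_{\epsilon_m}w_{\epsilon_m}$. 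Because \emph{both} pairs satisfy the inclusion $\mathcal{T}^b_{\epsilon_i}z_{\epsilon_i}\in\psi(\mathcal{T}^b_{\epsilon_i}w_{\epsilon_i})$ by \eqref{eqn:MDU}, the monotonicity of the graph $\psi$ gives $\bigl(\mathcal{T}^b_{\epsilon_\mu}z_{\epsilon_\mu}-\mathcal{T}^b_{\epsilon_m}z_{\epsilon_m}\bigr)\bigl(\mathcal{T}^b_{\epsilon_\mu}w_{\epsilon_\mu}-\mathcal{T}^b_{\epsilon_m}w_{\epsilon_m}\bigr)\ge 0$ pointwise, so the problematic term has a favourable sign with no reference to the limit; Gronwall, driven by the Cauchy property of $\mathcal{T}^b_\epsilon R(u_\epsilon,v_\epsilon)$ from part (ii) and of the unfolded initial data, then shows $\{\mathcal{T}^b_\epsilon w_\epsilon\}$ is Cauchy in $L^2(S\times\Omega\times\Gamma)$, and Lemma \ref{thm:PU1} identifies the strong limit with the two-scale limit $w$. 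The identification $z\in\psi(w)$ is a separate step done afterwards (as in the pointwise case analysis at the end of Section 4), not an input to the strong convergence. Replacing your limit-comparison by this Cauchy-in-$\epsilon$ comparison is the missing idea.
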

	\begin{proof}
		$(i)$ We want to establish $\textbf{q}_\epsilon u_\epsilon \overset{2}{\rightharpoonup} \textbf{q} u$ in $L^2(S\times\Omega\times Y)$ that means by $(iii)$ of Lemma \ref{lemma:uo} we have to show $\mathcal{T}_\epsilon(\textbf{q}_\epsilon u_\epsilon)=\mathcal{T}_\epsilon(\textbf{q}_\epsilon)\mathcal{T}_\epsilon(u_\epsilon)\rightharpoonup \textbf{q} u$ in $L^2(S\times\Omega\times Y)$. Let $\phi\in C_0^\infty(S\times\Omega; C^\infty_{per}(Y))$, then we calculate
		\begin{align*}
			&\left\lvert\int_{0}^{T}\int_{\Omega\times Y}(\mathcal{T}_\epsilon(\textbf{q}_\epsilon)\mathcal{T}_\epsilon(u_\epsilon)-\textbf{q} u)\phi dxdydt\right\rvert\\
			&\le \|\phi\|_{L^\infty(S\times\Omega\times Y)}\|\mathcal{T}_\epsilon(\textbf{q}_\epsilon)\|_{L^2(S\times\Omega\times Y)}\|\mathcal{T}_\epsilon(u_\epsilon)-u\|_{L^2(S\times\Omega\times Y)}+\left\lvert\int_{0}^{t}\int_{\Omega\times Y}(\mathcal{T}_\epsilon(\textbf{q}_\epsilon)-q)\phi u dxdydt\right\rvert\\
			&\rightarrow 0 \text{  as   } \epsilon\rightarrow 0.
		\end{align*}	
		Here we use the norm preserving property of $\mathcal{T}_\epsilon(\textbf{q}_\epsilon)$, given in $(ii)$ of Lemma \ref{lemma:uo}. Then by Lemma \ref{lemma:APB}, we know the strong convergence of $u_\epsilon$ and two scale convergence of $\textbf{q}_\epsilon$. Therefore from $(iii)$ of Lemma \ref{lemma:uo}, we can conclude that $\mathcal{T}_\epsilon(u_\epsilon)\rightarrow u$ in $L^2(S\times\Omega\times Y)$ and $\mathcal{T}_\epsilon(\textbf{q}_\epsilon)\rightharpoonup\textbf{q}$ in $L^2(S\times\Omega\times Y)$. Hence, $\textbf{q}_\epsilon u_\epsilon\overset{2}{\rightharpoonup}\textbf{q}u$ in $L^2(S\times\Omega\times Y)$. Similarly, $\textbf{q}_\epsilon v_\epsilon\overset{2}{\rightharpoonup}\textbf{q}v$ in in $L^2(S\times\Omega\times Y)$.\\
		$(ii)$ The proof  follows from Lemma 6.3 of \cite{ghosh2022diffusion}.\\
		$(iii)$ We unfold the ODE \eqref{eqn:MN1} by using the boundary unfolding operator \eqref{eqn:BU}. Next changing the variable $x=\epsilon y+\epsilon k$(for $x\in\Gamma_\epsilon^*$) to the fixed domain $S\times\Omega\times\Gamma$, we have
		\begin{align}
			\mathcal{T}^b_\epsilon(\frac{\partial w_\epsilon}{\partial t})=\frac{\partial}{\partial t}(	\mathcal{T}^b_\epsilon(w_\epsilon))=k_d(\mathcal{T}^b_\epsilon(R(u_\epsilon, v_\epsilon))-\mathcal{T}^b_\epsilon(z_\epsilon)), \text{  where  }\label{eqn:OU}\\
			\mathcal{T}^b_\epsilon(z_\epsilon)\in \psi(\mathcal{T}^b_\epsilon(w_\epsilon))=\mathcal{T}^b_\epsilon\psi(w_\epsilon)=\begin{cases}
				\{0\} \text{  if  } \mathcal{T}^b_\epsilon w_\epsilon<0,\\
				[0, 1] \text{  if  } \mathcal{T}^b_\epsilon w_\epsilon=0,\\
				\{1\}  \text{  if  } \mathcal{T}^b_\epsilon w_\epsilon>0
			\end{cases} \label{eqn:MDU}
		\end{align}  
		The rest part of the proof follows from $(ii)$ of Lemma 6.3 of \cite{ghosh2022diffusion}.
	\end{proof}
	
	\textbf{Homogenization of ($\mathcal{P}_\epsilon$), i.e. $\epsilon\rightarrow 0$ part:  }
	We make use of the two-scale convergence techniques to obtain the macroscopic equations $\eqref{eqn:qs}-\eqref{eqn:M12S}$. 
	We first upscale the unsteady Stokes equations. We choose $\psi\in C^\infty_0(\Omega ; C^\infty_{per}(Y))^n, \zeta\in C^\infty_0(S)$ in \eqref{eqn:p2} and pass the limit $\epsilon\rightarrow 0$ using Lemma \ref{lemma:APB}
	to derive
	\begin{align}\label{eqn:p3}
		\int_{0}^{T}\int_{\Omega\times Y^p} P(t,x,y)\nabla_y.\psi \partial_t\zeta dxdydt=0.
	\end{align}
	Therefore from \eqref{eqn:p3} it implies that the two scale limit $P(t, x, y)$ does not depend on $y$, i.e., $P(t)\in L^2_0(\Omega)$. Next, we pick $\psi\in C^\infty_0(\Omega ; C^\infty_{per}(Y))^n$ such that $\nabla_y.\psi=0$ in \eqref{eqn:p2} and obtain by Lemma \ref{lemma:APB} that
	\begin{align*}
		-\int_{0}^{T}\int_{\Omega\times Y^p} \textbf{q}(t, x, y)\psi(x, y)\partial_t\zeta dxdydt&+\mu \int_{0}^{T}\int_{\Omega\times Y^p} \nabla_y\textbf{q}(t, x, y): \nabla_y \psi(x, y)\zeta(t)dxdydt\\
		&=-\int_{0}^{T}\int_{\Omega\times Y^p} P(t,x)\nabla_x.\psi(x,y)\partial_t\zeta dxdydt.
	\end{align*}
	Moreover, following the same line of the proof of \cite{bavnas2017homogenization}, we have there exists a pressure $P_1\in L^\infty(S; L^2_0(\Omega); L^2_{per}(Y^p))$ such that
	\begin{align}\label{eqn:p4}
		-\int_{0}^{T}\int_{\Omega\times Y^p} \textbf{q}(t, x, y)\psi(x, y)\partial_t\zeta dxdydt+\mu \int_{0}^{T}\int_{\Omega\times Y^p} \nabla_y\textbf{q}(t, x, y): \nabla_y \psi(x, y)\zeta(t)dxdydt\notag\\
		=-\int_{0}^{T}\int_{\Omega\times Y^p} P(t,x)\nabla_x.\psi(x,y)\partial_t\zeta dxdydt-\int_{0}^{T}\int_{\Omega\times Y^p} P_1(t,x, y)\nabla_y.\psi(x,y)\partial_t\zeta dxdydt,
	\end{align}
	for all $\psi\in C^\infty_0(\Omega ; C^\infty_{per}(Y))^n$ and $\zeta\in C^\infty_0(S)$. It follows from \eqref{eqn:p4} that the first equation of \eqref{eqn:qs} holds in the distributional sense with $p=\partial_tP, p_1=\partial_tP_1$. The remaining equations of \eqref{eqn:qs} follows from \cite{allaire1992homogenization1}.
	Next, we take the test functions $\Phi_i(t,x,\frac{x}{\epsilon})=\phi_i(t,x)+\epsilon\psi_i(t,x,\frac{x}{\epsilon})$, where $\phi_i\in C^\infty_0(S\times\Omega)$ and $\psi_i(t,x,\frac{x}{\epsilon})\in C^\infty_0(S\times\Omega ; C^\infty_{per}(Y))$, for $i=1,2$. Now, multiplying \eqref{eqn:M11} by $\Phi_1$, we have
	\begin{align*}
		&\underbrace{\int_{0}^{T}\int_{\Omega_\epsilon^p}\frac{\partial u_\epsilon}{\partial t}[\phi_1(t,x)+\epsilon\psi_1(t,x,\frac{x}{\epsilon})]dxdt}_{I_1}+\underbrace{\int_{0}^{T}\int_{\Omega_\epsilon^p} [D_1(x,\frac{x}{\epsilon})\nabla u_\epsilon-\textbf{q}_\epsilon u_\epsilon] [\nabla_x \phi_1+\epsilon\nabla_x \psi_1+\nabla_y\psi_1]dxdt}_{I_2}\\
		&+\underbrace{\epsilon\int_{0}^{T}\int_{\Gamma^*_\epsilon}\frac{\partial w_\epsilon}{\partial t}[\phi_1(t,x)+\epsilon\psi_1(t,x,\frac{x}{\epsilon})]d\sigma_xdt}_{I_3}=0.
	\end{align*}
	We pass to the two scale limit to each term separately. This gives
	\begin{align*}
		\lim_{\epsilon\rightarrow 0} I_1&=-\lim_{\epsilon\rightarrow 0}\int_{0}^{T}\int_{\Omega_\epsilon^p}u_\epsilon(t,x)[\frac{\partial \phi_1}{\partial t}+\epsilon\frac{\partial \psi_1}{\partial t}]dxdt\\
		&=-\lim_{\epsilon\rightarrow 0}\int_{0}^{T}\int_{\Omega}\chi(\frac{x}{\epsilon})u_\epsilon(t,x)[\frac{\partial \phi_1}{\partial t}+\epsilon\frac{\partial \psi_1}{\partial t}]dxdt=-\int_{0}^{T}\int_{\Omega}\int_{Y} \chi(y)u(t,x)\frac{\partial \phi_1}{\partial t}dxdydt,\\
		\lim_{\epsilon\rightarrow 0} I_2&=\lim_{\epsilon\rightarrow 0}\int_{0}^{T}\int_{\Omega}\chi(\frac{x}{\epsilon})[D_1(x,\frac{x}{\epsilon})\nabla u_\epsilon-\textbf{q}_\epsilon u_\epsilon] [\nabla_x \phi_1+\epsilon\nabla_x \psi_1+\nabla_y\psi_1]dxdt\\
		&=\int_{0}^{T}\int_{\Omega}\int_{Y} \chi(y)(D_1(x,y)(\nabla u(t,x)+\nabla_y u_1(t,x,y))-\textbf{q}u)(\nabla_x \phi_1+\nabla_y \psi_1)dxdydt,\\
		\lim_{\epsilon\rightarrow 0} I_3&=\lim_{\epsilon\rightarrow 0}\epsilon\int_{0}^{T}\int_{\Gamma_\epsilon^*}\frac{\partial w_\epsilon}{\partial t}[\phi_1(t,x)+\epsilon\psi_1(t,x,\frac{x}{\epsilon})]d\sigma_xdt=\int_{0}^{T}\int_{\Omega}\int_{\Gamma}\frac{\partial w}{\partial t}\phi_1 dxd\sigma_ydt.
	\end{align*}
	Combing all these, we get
	\begin{align}\label{eqn:HE1}
		-\int_{0}^{T}\int_{\Omega}u(t,x)\frac{\partial \phi_1}{\partial t}dxdt+\frac{1}{|Y^p|}\int_{0}^{T}\int_{\Omega}\int_{Y^p} (D_1(x, y)(\nabla u&+\nabla_y u_1)-\textbf{q}u)(\nabla \phi_1+\nabla_y \psi_1)dxdydt\notag\\
		&+\int_{0}^{T}\int_{\Omega}\int_{\Gamma}\frac{1}{|Y^p|}\frac{\partial w}{\partial t}\phi_1 dxd\sigma_ydt=0.
	\end{align} 
	We decompose the equation \eqref{eqn:HE1} to achieve the homogenized equation and the cell problem. Setting $\phi_1\equiv0$ yields
	\begin{align*}
		\int_{0}^{T}\int_{\Omega}\int_{Y^p} (D_1(x,y)(\nabla u(t,x)+\nabla_y u_1(t,x,y)-\textbf{q}u)\nabla_y \psi_1dxdydt=0.
	\end{align*} 
	Choosing $u_1(t,x,y)=\sum_{j=1}^{n}\frac{\partial u}{\partial x_j}(t,x)k_j(t,x,y)+k_0(t, x, y)u(t, x)+p(x)$, we have for each $j=1,2,...,n,\; k_j(t,x,y)\in L^\infty(S\times\Omega ; H^{1,2}_{per}(Y))$ is the $Y-$periodic solution of the cell problems
	\begin{align}\label{eqn:CE1}
		\begin{cases}
			-(div)_y(D_1(x,y)(\nabla_y k_j+e_j+\nabla_y k_0 -\textbf{q}))=0 \text{ for all } y\in Y^p,\\
			-D_1(x,y)(\nabla_y k_j+e_j+\nabla_y k_0 -\textbf{q}).\vec{n}=0 \text{ on } \Gamma,\\
			y \mapsto k_j(y) \text{ is } Y-\text{periodic}.
		\end{cases}
	\end{align}  
	Now, $\psi_1\equiv 0$ implies
	\begin{align*}
		-\int_{0}^{T}\int_{\Omega}u(t,x)\frac{\partial \phi_1}{\partial t}dxdt+\int_{0}^{T}\int_{\Omega}\int_{Y^p} \frac{D_1(x,y)}{|Y^p|}(\nabla u(t,x)+\nabla_y u_1(t,x,y))\nabla_x \phi_1dxdydt\notag\\
		-\frac{1}{|Y^p|} \int_{0}^{T}\int_{\Omega}\int_{Y^p} \textbf{q}u\nabla_x \phi_1dxdydt+\int_{0}^{T}\int_{\Omega} P(t,x)\phi_1 dxdt=0.
	\end{align*}
	With the choice of $u_1$, the above equation takes the form
	\begin{align*}
		\int_{0}^{T}\int_{\Omega}\frac{\partial u(t,x) }{\partial t}\phi_1dxdt+\int_{0}^{T}\int_{\Omega}\int_{Y^p} \frac{D_1(x,y)}{|Y^p|}\left(\delta_{ij}+\sum_{i,j=1}^{n}\frac{\partial k_j}{\partial y_i}\right)\frac{\partial u(t,x)}{\partial x_j}\nabla_x \phi_1dxdydt\notag\\
		-\frac{1}{|Y^p|} \int_{0}^{T}\int_{\Omega}\int_{Y^p}[\textbf{q}u-D_1(x, y)\nabla_y k_0 u]\nabla_x\phi_1 dxdydt+\int_{0}^{T}\int_{\Omega} P(t,x)\phi_1 dxdt=0.
	\end{align*}
	Upon further simplification, the above equation reduces to
	\begin{align*}
		\int_{0}^{T}\int_{\Omega}\left[\frac{\partial u(t,x) }{\partial t}-\nabla.(A\nabla u-(\bar{\textbf{q}}-\tilde{\textbf{q}}_0)u)+P(t,x)\right]\phi_1dxdt=0,\text{ for all }\phi_1\in C^\infty_0(S\times\Omega),
	\end{align*}
	where
	\begin{align*}
		&A=(a_{ij})_{1\le i,j \le n}=\int_{Y^p}\frac{D_1(x, y)}{|Y^p|}\left(\delta_{ij}+\sum_{i,j=1}^{n}\frac{\partial k_j}{\partial y_i}\right)dy, \quad P(t,x)=\int_{\Gamma}\frac{1}{|Y^p|}\frac{\partial w}{\partial t}\;d\sigma_y,\\
		&\bar{\textbf{q}}=\frac{1}{|Y^p|}\int_{Y^p} \textbf{q}(t, x, y)dy, \quad \tilde{\textbf{q}}_0=\frac{1}{|Y^p|}\int_{Y^p} D_1(x, y)\nabla_y k_0 dy.
	\end{align*}
	Hence, the homogenized equation for the mobile species $A$ is given by \eqref{eqn:M1S}. Similarly, testing \eqref{eqn:M21} by $\Phi_2$ implies
	\begin{align*}
		\frac{\partial v}{\partial {t}} - \nabla.(B\nabla v-(\bar{\textbf{q}}-\tilde{\textbf{q}}_0)v)+P(t,x)&=0 \; \text{ in } S\times \Omega,	\\
		-(B\nabla v-(\bar{\textbf{q}}-\tilde{\textbf{q}}_0)v).\vec{n}&=\frac{g}{|Y^p|} \text{ on } S\times \partial\Omega_{in},\\
		-(B\nabla v-(\bar{\textbf{q}}-\tilde{\textbf{q}}_0)v).\vec{n}&=\frac{h}{|Y^p|} \text{ on } S\times \partial\Omega_{out},\\
		v(0,x)&=v_0(x)\; \text{ in }\Omega,
	\end{align*}
	where
	\begin{align*}
		B=(b_{ij})_{1\le i,j \le n}=\int_{Y^p}\frac{D_2(x,y)}{|Y^p|}\left(\delta_{ij}+\sum_{i,j=1}^{n}\frac{\partial k_j}{\partial y_i}\right)dy, \quad P(t,x)=\int_{\Gamma}\frac{1}{|Y^p|}\frac{\partial w}{\partial t}\;d\sigma_y
	\end{align*}
	and $k_j(t,x,y)\in L^\infty(S\times\Omega ; H^{1,2}_{per}(Y))$ is the solution of the cell problems
	\begin{align}\label{eqn:CE2}
		\begin{cases}
			-(div)_y(D_2(x,y)(\nabla_y k_j+e_j+\nabla_y k_0 -\textbf{q}))=0 \text{ for all } y\in Y^p,\\
			-D_2(x,y)(\nabla_y k_j+e_j+\nabla_y k_0 -\textbf{q}).\vec{n}=0 \text{ on } \Gamma,\\
			y \mapsto k_j(y) \text{ is } Y-\text{periodic},
		\end{cases}
	\end{align} 
	for $j=1,2,...,n$. Finally, we pass the two-scale limit to the ode \eqref{eqn:MN1}. We choose the test function $\psi(t,x,\frac{x}{\epsilon})\in C^\infty_0(S\times\Omega ; C^\infty_{per}(Y))$ and obtain
	\begin{align*}
		\int_{0}^{T}\int_{\Gamma_\epsilon^*}\frac{\partial w_\epsilon}{\partial t}\psi(t,x,\frac{x}{\epsilon})d\sigma_xdt=k_d\int_{0}^{T}\int_{\Gamma_\epsilon^*}(R(u_\epsilon,v_\epsilon)-z_\epsilon)\psi(t,x,\frac{x}{\epsilon})d\sigma_xdt.
	\end{align*}
	Lemma \ref{lemma:APB} and Lemma \ref{lemma:R2S} leads to
	\begin{align*}
		\int_{0}^{T}\int_{\Omega}\int_{\Gamma} \frac{\partial w}{\partial t}\psi(t,x,y)dxd\sigma_ydt=k_d\int_{0}^{T}\int_{\Omega}\int_{\Gamma}(R(u,v)-z)\psi(t,x,y)dxd\sigma_ydt.
	\end{align*}
	Hence, the macroscopic equation for the ode is
	\begin{align*}
		\frac{\partial w}{\partial  {t}}&=k_d(R(u,v)-z) \; \text{ in } S\times\Omega\times \Gamma.
	\end{align*}
	Now, we need to characterize the two-scale limit of the multi-valued dissolution rate term. According to the Lemma \ref{lemma:R2S}, $\mathcal{T}^b_\epsilon(w_\epsilon)\rightarrow w$ in $L^2(S\times\Omega\times\Gamma)$. By corollary on page 53 of \cite{yosidafunctional}, there exists a subsequence $\{\mathcal{T}_\epsilon^b(w_{\epsilon})\}$(still denoted by the same symbol) pointwise convergent to $w$ a.e. in $S\times\Omega\times\Gamma$. i.e. 
	\begin{align*}
		\lim_{\epsilon\rightarrow 0} \mathcal{T}_\epsilon^b(w_\epsilon)=w \text{ a.e. in }S\times\Omega\times\Gamma.
	\end{align*}
	As $\mathcal{T}_\epsilon^b(w_\epsilon)\ge 0$, $w\ge0$ so we have to consider two different cases\\
	Case 1: Let $w>0$ pointwise a.e. in $S\times\Omega\times\Gamma$. 
	As $w$ is the pointwise limit of $\mathcal{T}_\epsilon^b(w_\epsilon)$, therefore for any $\eta>0$ there exists a $\delta>0$ such that $|\epsilon|<\delta\implies |\mathcal{T}_\epsilon^b(w_\epsilon)-w|<\eta$. Choosing $\eta=\frac{|w|}{2}=\frac{w}{2}$ gives $\mathcal{T}_\epsilon^b(w_\epsilon)>\frac{w}{2}$. By definition \eqref{eqn:MDU}, we have $\mathcal{T}_\epsilon^bz_\epsilon=1$. This means $\mathcal{T}_\epsilon^bz_\epsilon \rightharpoonup 1$. Then by Lemma \ref{thm:PU1}, $z_\epsilon\overset{2}{\rightharpoonup} 1$ in $L^2(S\times\Omega\times\Gamma)$, whereas Lemma \ref{lemma:APB} implies that $z_\epsilon\overset{2}{\rightharpoonup} z$ in $L^2(S\times\Omega\times\Gamma)$. Consequently, $z=1$.\\
	Case 2: Let $w=0$ pointwise a.e. in $S\times\Omega\times\Gamma$. 
	Since $\frac{\partial w_\epsilon}{\partial t}\in L^2(S\times\Gamma_\epsilon^*)$, for a test function $\phi\in C_0^\infty(S\times\Omega)$ we get
	\begin{align*}
		\int_{0}^{T}\int_{\Gamma_\epsilon^*}\frac{\partial w_\epsilon}{\partial t}\phi d\sigma_xdt=-\int_{0}^{T}\int_{\Gamma^*_\epsilon} w_\epsilon\frac{\partial \phi}{\partial t}d\sigma_xdt\rightarrow 0 \text{  as   } \epsilon\rightarrow 0 \text{ because } \lim_{\epsilon\rightarrow 0} w_\epsilon=0.
	\end{align*}
	As $\phi$ is arbitrary, it follows that $\frac{\partial w_\epsilon}{\partial t}\overset{2}{\rightharpoonup} 0$ which means $\frac{\partial}{\partial t}(\mathcal{T}_\epsilon^b(w_\epsilon))\rightharpoonup 0$ in $L^2(S\times\Omega\times\Gamma)$. Then,
	\begin{align*}
		\int_{0}^{T}\int_{\Omega}\int_{\Gamma} \frac{\partial}{\partial t}(\mathcal{T}_\epsilon^b(w_\epsilon))\phi dxd\sigma_ydt=k_d\int_{0}^{T}\int_{\Omega}\int_{\Gamma}(\mathcal{T}_\epsilon^b(R(u_\epsilon,v_\epsilon))-\mathcal{T}_\epsilon^bz_\epsilon)\phi dxd\sigma_ydt.
	\end{align*}
	Passing the limit $\epsilon\rightarrow 0$, we are led to
	\begin{align*}
		\int_{0}^{T}\int_{\Omega}\int_{\Gamma} 0\phi dxd\sigma_ydt=k_d\int_{0}^{T}\int_{\Omega}\int_{\Gamma}(R(u,v)-z)dxd\sigma_ydt,
	\end{align*} 
	i.e. $z=R(u,v)$ and $0\le z\le1$ when $w=0$ on $\Gamma$. Since $w_0\in L^2(\Omega)$, $\mathcal{T}_\epsilon^b(w_0)\rightarrow w_0$ in $L^2(S\times\Omega\times\Gamma)$. Hence, the two-scale limit equation for the ODE is given by \eqref{eqn:M12S}.
	%\subsection{First $\epsilon\rightarrow 0$ then $\delta\rightarrow 0$}
	\section{Proof the the Theorem \ref{thm:2}}
	\begin{lemma}
		There exists an extension of the solution $(\textbf{q}_\epsilon, u_{\epsilon\delta},v_{\epsilon\delta})$, still denoted by same symbol, of $(\mathcal{P}_{\epsilon\delta})$ into all of $S\times\Omega$ which satisfies
		% We extend the solutions $(u_{\epsilon\delta},v_{\epsilon\delta})$ of $(P_{\epsilon\delta})$ to all $S\times\Omega$ such that
		\begin{align}\label{eqn:NE3}
			&\|\textbf{q}_\epsilon\|_{L^\infty(S; \textbf{L}^2(\Omega))}+\epsilon\sqrt{\mu}\|\nabla\textbf{q}_\epsilon\|_{L^2(S\times\Omega)^{n\times n}}+\|\partial_t \textbf{q}_\epsilon\|_{L^2(S; \textbf{H}^{-1,2}_{div}(\Omega))}+\|u_{\epsilon\delta}\|_{L^2(S;H^{1,2}(\Omega))}\notag\\
			+&\left\lVert \chi_\epsilon \partial_t u_\epsilon\right\rVert_{L^{2}(S; H^{1,2}(\Omega)^{*})}+\|v_{\epsilon\delta}\|_{L^2(S;H^{1,2}(\Omega))}+\left\lVert \chi_\epsilon \partial_t v_\epsilon\right\rVert_{L^{2}(S; H^{1,2}(\Omega)^{*})}+\sup_{t\in S}\|P_\epsilon(t)\|_{L^2_0(\Omega)}\le C,
		\end{align}
		where the constant $C>0$ is independent of $\epsilon$ and $\delta$.
	\end{lemma}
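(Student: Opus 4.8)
The plan is to mimic the proof of Lemma~\ref{lemma:E1} for the regularized system, the only new point being that every constant must be traced to be independent of the regularization parameter $\delta$. The decisive observation is that the regularized dissolution rate satisfies $0\le\psi_\delta(w_{\epsilon\delta})\le 1$ uniformly in $\delta$ by \eqref{eqn:MDR}, while the Langmuir term obeys $0\le R(u_{\epsilon\delta},v_{\epsilon\delta})\le\frac{k}{4}$ by \eqref{eqn:RI}; hence the source terms appearing in \eqref{eqn:M1RWF}--\eqref{eqn:M2RWF} are bounded independently of $\delta$, and all the a-priori bounds collected in Lemma~\ref{lemma:M1LB} hold with constants depending on neither $\epsilon$ nor $\delta$.

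First I would invoke Lemma~\ref{lemma:M1LB} to obtain the pore-space bounds on $\textbf{q}_\epsilon$, $u_{\epsilon\delta}$, $v_{\epsilon\delta}$ and their gradients. I would then apply the extension operator of Lemma~\ref{lemma:EL1} to $u_{\epsilon\delta}$ and $v_{\epsilon\delta}$, which maps $H^{1,2}(\Omega_\epsilon^p)$ into $H^{1,2}(\Omega)$ with operator norm independent of $\epsilon$; this converts the pore-space estimates ii.--iii. of Lemma~\ref{lemma:M1LB} into the $\|u_{\epsilon\delta}\|_{L^2(S;H^{1,2}(\Omega))}$ and $\|v_{\epsilon\delta}\|_{L^2(S;H^{1,2}(\Omega))}$ bounds in \eqref{eqn:NE3}. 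For the velocity, since $\textbf{q}_\epsilon$ vanishes on $\partial\Omega_\epsilon^p$ by the no-slip condition \eqref{eqn:v3}, its extension by zero to the solid part lies in $\textbf{H}^{1,2}_0(\Omega)$ and preserves both $\|\textbf{q}_\epsilon\|_{L^\infty(S;\textbf{L}^2(\Omega))}$ and $\epsilon\sqrt{\mu}\|\nabla\textbf{q}_\epsilon\|_{L^2(S\times\Omega)^{n\times n}}$, while the dual-norm bound on $\partial_t\textbf{q}_\epsilon$ follows verbatim from part i. of Lemma~\ref{lemma:M1LB}.

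Next I would estimate the localized time derivatives. Testing the variational form of \eqref{eqn:M1RWF} against $\phi(t)\psi(x)$ with $\phi\in H^{1,2}_0(S)$ and $\psi\in H^{1,2}(\Omega)$, I would bound the diffusion term by $M\|\nabla u_{\epsilon\delta}\|$, the convection term by $\|\textbf{q}_\epsilon\|_{L^4(\Omega_\epsilon^p)}\|u_{\epsilon\delta}\|_{L^4(\Omega_\epsilon^p)}$ using the uniform $L^4$ bounds of Lemma~\ref{lemma:M1LB}, the surface contribution by $k_d(1+\frac{k}{4})$ times the uniformly controlled measure $\epsilon|\Gamma^*_\epsilon|\to|\Gamma||\Omega|/|Y|$, and the inflow/outflow terms by the $L^2$ data norms of assumption (\textbf{A3}.). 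Taking the supremum over $\psi$ in the unit ball of $H^{1,2}(\Omega)$ and over $\phi$ then yields $\|\chi_\epsilon\partial_t u_{\epsilon\delta}\|_{L^2(S;H^{1,2}(\Omega)^*)}\le C$, uniformly in $\epsilon$ and $\delta$ precisely because $\psi_\delta\le 1$ independently of $\delta$; the identical argument applied to \eqref{eqn:M2RWF} handles $v_{\epsilon\delta}$.

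Finally I would extend the pressure: the bound \eqref{eqn:pe} controls $\nabla P_\epsilon$ uniformly in $H^{-1,2}(\Omega_\epsilon^p)^n$, and following Lemma~5.1 of \cite{bavnas2017homogenization} one constructs a bounded extension of $P_\epsilon$ to $L^2_0(\Omega)$ with $\sup_{t\in S}\|P_\epsilon(t)\|_{L^2_0(\Omega)}\le C$ uniformly. Collecting these estimates gives \eqref{eqn:NE3}. The main obstacle is the dual-space estimate for $\chi_\epsilon\partial_t u_{\epsilon\delta}$: one must control a time derivative supported only on the perforated pore space as a functional on all of $H^{1,2}(\Omega)$, and it is here that the $\delta$-uniform boundedness of $\psi_\delta$ is essential, since without it the surface source term could not be dominated uniformly in $\delta$.
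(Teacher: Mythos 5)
Your proposal is correct and follows essentially the same route as the paper: the paper's proof of this lemma simply states that it "follows the same lines" as Lemma \ref{lemma:E1}, i.e.\ combine the uniform bounds of Lemma \ref{lemma:M1LB} with the extension operator of Lemma \ref{lemma:EL1}, test the weak form against $\phi(t)\psi(x)$ to control $\chi_\epsilon\partial_t u_{\epsilon\delta}$ and $\chi_\epsilon\partial_t v_{\epsilon\delta}$ in $L^2(S;H^{1,2}(\Omega)^*)$, and extend the pressure via \eqref{eqn:pe} as in Lemma~5.1 of \cite{bavnas2017homogenization}. Your explicit tracking of $\delta$-independence through the uniform bounds $0\le\psi_\delta\le 1$ and $0\le R\le k/4$ is exactly the point the paper leaves implicit, and is a welcome addition.
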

	\begin{proof}
		The proof follows the same lines of the proof of Lemma \ref{lemma:E1}.
	\end{proof}
	The derivation of the homogenized system $\eqref{eqn:qs1}$ of the unsteady Stokes system $\eqref{eqn:v1}-\eqref{eqn:v4}$ comes from repeating the same calculations as stated in $\eqref{eqn:p3}-\eqref{eqn:p4}$ as there is no regularization parameter $\delta>0$ involved. Therefore now we mainly focus on deducing the strong form of the macromodel $\eqref{eqn:M1S2}-\eqref{eqn:M12S2}$. 
	\begin{lemma}\label{lemma:APB2}
		Let $(u_{\epsilon\delta})_{\epsilon>0},(v_{\epsilon\delta})_{\epsilon>0}$ and $(w_{\epsilon\delta})_{\epsilon>0}$ satisfy the hypotheses of the Lemma \ref{lemma:M1LB} and the estimate \eqref{eqn:NE3}. Then, the following convergence holds:
		\begin{itemize}
			\item[(i)] $u_{\epsilon\delta}\rightharpoonup u_\delta$  in $L^2(S; H^{1,2}(\Omega))$,\quad (ii) $\frac{\partial u_{\epsilon\delta}}{\partial t} \rightharpoonup \frac{\partial u_\delta}{\partial t}$  in $L^2(S ; H^{1,2}(\Omega)^*)$,
			\item[(iii)]	$v_{\epsilon\delta}\rightharpoonup v_\delta$  in $L^2(S; H^{1,2}(\Omega))$,\quad (iv) $\frac{\partial v_{\epsilon\delta}}{\partial t} \rightharpoonup \frac{\partial v_\delta}{\partial t}$  in $L^2(S; H^{1,2}(\Omega)^*)$,
			\item[(v)]  $u_{\epsilon\delta}\rightarrow u_\delta$ in $L^2(S\times\Gamma_\epsilon^*)$, \quad\quad (vi)  $v_{\epsilon\delta}\rightarrow v_\delta$ in $L^2(S\times\Gamma_\epsilon^*)$,
			\item[(vii)] There exists  $u_{\delta1}\in L^2(S\times\Omega ; H^{1,2}_{per}(Y)/\mathbb{R})$ such that $u_{\epsilon\delta}\overset{2}{\rightharpoonup}u_\delta$ and $\nabla u_{\epsilon\delta} \overset{2}{\rightharpoonup} \nabla_x u_\delta + \nabla_y u_{\delta1}$,
			\item[(viii)] There exists $v_{\delta1}\in L^2(S\times\Omega ; H^{1,2}_{per}(Y)/\mathbb{R})$ such that $v_{\epsilon\delta}\overset{2}{\rightharpoonup}v_\delta$ and $\nabla v_{\epsilon\delta} \overset{2}{\rightharpoonup} \nabla_x v_\delta + \nabla_y v_{\delta1}$,
			\item[(ix)] $w_{\epsilon\delta}\overset{2}{\rightharpoonup}w_\delta$ in $L^2(S\times\Omega\times\Gamma),$ \quad (x) $\frac{\partial w_{\epsilon\delta}}{\partial t}\overset{2}{\rightharpoonup} \frac{\partial w_\delta}{\partial t}$ in $L^2(S\times\Omega\times\Gamma)$.
		\end{itemize}
	\end{lemma}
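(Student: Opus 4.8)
The plan is to mirror the proof of Lemma \ref{lemma:APB} almost verbatim, the decisive point being that the a-priori bound \eqref{eqn:NE3} is uniform in both $\epsilon$ and $\delta$. I would fix $\delta>0$ once and for all, regard $(u_{\epsilon\delta},v_{\epsilon\delta},w_{\epsilon\delta})_{\epsilon>0}$ as a sequence indexed only by $\epsilon$, and extract convergent subsequences whose limits inherit the subscript $\delta$. Because every constant below comes from a bound that is independent of $\delta$, the resulting limit functions and estimates are themselves $\delta$-independent, which is exactly what is required for the subsequent passage $\delta\to0$.

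First I would harvest the weak compactness statements (i)--(iv). The estimate \eqref{eqn:NE3} bounds $u_{\epsilon\delta},v_{\epsilon\delta}$ in $L^2(S;H^{1,2}(\Omega))$ and $\chi_\epsilon\partial_t u_{\epsilon\delta},\chi_\epsilon\partial_t v_{\epsilon\delta}$ in $L^2(S;H^{1,2}(\Omega)^*)$; reflexivity of these Hilbert spaces together with the Banach--Alaoglu theorem yields weakly convergent subsequences, and identifying the weak limit of the distributional time derivative with the time derivative of the weak limit gives (ii) and (iv). For the two-scale statements (vii)--(x) I would invoke Lemma \ref{thm:TS2} for the interior quantities and Lemma \ref{thm:TS3} for the boundary quantities: the uniform gradient bound forces $\nabla u_{\epsilon\delta}\overset{2}{\rightharpoonup}\nabla_x u_\delta+\nabla_y u_{\delta1}$ with a corrector $u_{\delta1}\in L^2(S\times\Omega;H^{1,2}_{per}(Y)/\mathbb{R})$, and likewise for $v_{\epsilon\delta}$, while the $\epsilon$-scaled bounds on $w_{\epsilon\delta}$ and $\partial_t w_{\epsilon\delta}$ supplied by Lemma \ref{lemma:M1LB} give (ix) and (x) directly.

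The main obstacle is the strong trace convergence (v) and (vi) on the oscillating interface $\Gamma_\epsilon^*$, whose surface measure blows up like $\epsilon^{-1}$. I would argue exactly as in Lemma \ref{lemma:APB}: for $\beta\in(\tfrac12,1)$ and $\beta<\beta'\le1$ one has the compact embedding $H^{\beta',2}(\Omega)\hookrightarrow\hookrightarrow H^{\beta,2}(\Omega)$, so with $U=\{u\in L^2(S;H^{1,2}(\Omega)):\partial_t u\in L^2(S\times\Omega)\}$ the Lions--Aubin lemma gives $U\hookrightarrow\hookrightarrow L^2(S;H^{\beta,2}(\Omega))$ for each fixed $\epsilon$, hence $u_{\epsilon\delta}\to u_\delta$ strongly in $L^2(S;H^{\beta,2}(\Omega))$. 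The $\epsilon$-independent trace inequality \eqref{eqn:TI3} then converts this interior strong convergence into the boundary statement
\begin{align*}
\|u_{\epsilon\delta}-u_\delta\|_{L^2(S\times\Gamma_\epsilon^*)}\le C\,\|u_{\epsilon\delta}-u_\delta\|_{L^2(S;H^{\beta,2}(\Omega))}\to0,
\end{align*}
and identically for $v_{\epsilon\delta}$. The delicate step to verify is that the trace constant $C$ in \eqref{eqn:TI3} is uniform in $\epsilon$ despite the growth of $|\Gamma_\epsilon^*|$; this is precisely what the $\epsilon$-weighted surface norm introduced in Section 3 is designed to guarantee.

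Once (v) and (vi) are in place, the locally Lipschitz bound \eqref{eqn:LL} upgrades them to strong convergence $R(u_{\epsilon\delta},v_{\epsilon\delta})\to R(u_\delta,v_\delta)$ in $L^2(S\times\Gamma_\epsilon^*)$, which is the ingredient needed to pass to the limit in the reaction term of \eqref{eqn:M12RWF} and thereby obtain the macroscopic system \eqref{eqn:M1S2}--\eqref{eqn:M12S2}. I do not expect (i)--(iv) or (vii)--(x) to present any difficulty beyond bookkeeping; the entire weight of the lemma rests on the uniform-in-$\epsilon$ trace estimate underpinning (v) and (vi).
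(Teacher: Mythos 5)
Your proposal is correct and follows essentially the same route as the paper, which itself proves this lemma simply by repeating the argument of Lemma \ref{lemma:APB}: weak compactness from the uniform bound \eqref{eqn:NE3} for (i)--(iv), the two-scale compactness results of Lemma \ref{thm:TS2} and Lemma \ref{thm:TS3} for (vii)--(x), and the Lions--Aubin compactness combined with the $\epsilon$-uniform trace inequality \eqref{eqn:TI3} for the strong boundary convergences (v)--(vi). Your added emphasis that all constants are independent of $\delta$ is exactly the point that makes the subsequent passage $\delta\to 0$ legitimate.
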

	\begin{proof}
		Following the same line of arguments as in the proof of Lemma \ref{lemma:APB} we can obtain the results immediately.
	\end{proof}
	\begin{lemma}\label{lemma:R2S1}
		\begin{enumerate}
			\item [$(i)$] The reaction rate term $R(u_{\epsilon\delta},v_{\epsilon\delta})\rightarrow R(u_\delta,v_\delta)$ strongly in $L^2(S\times\Gamma_\epsilon^*)$. This strong convergence further implies that $R(u_{\epsilon\delta},v_{\epsilon\delta})\overset{2}{\rightharpoonup} R(u_\delta,v_\delta)$ in $L^2(S\times\Omega\times\Gamma)$.
			\item[$(ii)$] $\mathcal{T}_\epsilon^b(w_{\epsilon\delta})\rightarrow w_\delta$ strongly in $L^2(S\times\Omega\times\Gamma)$.
		\end{enumerate}
	\end{lemma}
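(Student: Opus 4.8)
The plan is to mirror the structure of Lemma~\ref{lemma:R2S}, now with the regularization parameter $\delta>0$ held fixed as $\epsilon\to0$. The two assertions decouple: part $(i)$ rests on the Lipschitz continuity of $R$ together with the boundary strong convergences already recorded in Lemma~\ref{lemma:APB2}, while part $(ii)$ exploits that, for fixed $\delta$, the dissolution term $\psi_\delta$ is single-valued and globally Lipschitz with constant $1/\delta$, so that no multivalued characterization is required.

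For part $(i)$, I would begin from the local Lipschitz bound \eqref{eqn:LL} for $R$, used symmetrically in both arguments since $R$ is locally Lipschitz on $\mathbb{R}^2$, giving the pointwise estimate
\[
|R(u_{\epsilon\delta},v_{\epsilon\delta})-R(u_\delta,v_\delta)|\le L_R\big(|u_{\epsilon\delta}-u_\delta|+|v_{\epsilon\delta}-v_\delta|\big).
\]
Integrating over $S\times\Gamma_\epsilon^*$ against the scaled surface measure $\epsilon\,d\sigma_x\,dt$, whose common $\epsilon$-weight on both sides leaves the estimate intact, and invoking the boundary convergences $u_{\epsilon\delta}\to u_\delta$ and $v_{\epsilon\delta}\to v_\delta$ in $L^2(S\times\Gamma_\epsilon^*)$ from parts $(v)$--$(vi)$ of Lemma~\ref{lemma:APB2}, I obtain $R(u_{\epsilon\delta},v_{\epsilon\delta})\to R(u_\delta,v_\delta)$ strongly in $L^2(S\times\Gamma_\epsilon^*)$. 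As the strong limit depends only on $(t,x)$, strong boundary convergence upgrades to two-scale convergence on $S\times\Omega\times\Gamma$ towards $R(u_\delta,v_\delta)$ by the standard boundary two-scale argument, exactly as in Lemma~\ref{lemma:R2S}$(ii)$.

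For part $(ii)$, I would apply the boundary unfolding operator $\mathcal{T}_\epsilon^b$ to the regularized ODE \eqref{eqn:M12RWF}. Since $\psi_\delta$ is continuous, unfolding commutes with it as a Nemytskii operator, yielding
\[
\frac{\partial}{\partial t}\big(\mathcal{T}_\epsilon^b(w_{\epsilon\delta})\big)=k_d\Big(\mathcal{T}_\epsilon^b\big(R(u_{\epsilon\delta},v_{\epsilon\delta})\big)-\psi_\delta\big(\mathcal{T}_\epsilon^b(w_{\epsilon\delta})\big)\Big)\quad\text{on }S\times\Omega\times\Gamma,
\]
with unfolded initial datum $\mathcal{T}_\epsilon^b(w_0)\to w_0$ in $L^2(\Omega\times\Gamma)$. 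The uniform bounds of Lemma~\ref{lemma:M1LB}$(vi)$ transfer under unfolding to a uniform bound on $\mathcal{T}_\epsilon^b(w_{\epsilon\delta})$ in $H^{1,2}(S;L^2(\Omega\times\Gamma))$, so that along a subsequence it converges. To identify the limit as $w_\delta$ and to promote the convergence to strong, I would subtract the limiting ODE \eqref{eqn:M12S2}, test with the difference, and close a Gronwall estimate using part $(i)$ for the $R$-term and the $1/\delta$-Lipschitz bound for the $\psi_\delta$-term; the resulting constant depends on $\delta$, which is harmless since $\delta$ is fixed. This is precisely the route of Lemma~\ref{lemma:R2S}$(iii)$, and the remainder follows from $(ii)$ of Lemma~6.3 of \cite{ghosh2022diffusion}.

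The main obstacle is part $(ii)$: one must verify that the unfolded regularized ODE passes to the limit and that the convergence is strong rather than merely weak. The simplification relative to the $\delta\to0$ analysis of Lemma~\ref{lemma:R2S} is that $\psi_\delta$ is single-valued and Lipschitz, so there is no multivalued dissolution term to characterize through a pointwise case distinction; the only price is that the Gronwall constant degrades like $1/\delta$, which is acceptable because this limit is taken at fixed $\delta$.
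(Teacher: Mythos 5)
Your part (i) matches the paper's proof in substance: the paper simply says ``proceed as in Lemma \ref{lemma:R2S}'', which is exactly the Lipschitz-of-$R$ plus boundary strong convergence plus upgrade-to-two-scale argument you give. For part (ii), however, you take a genuinely different route from the paper. The paper never compares $\mathcal{T}^b_\epsilon(w_{\epsilon\delta})$ with a limit object; instead it takes two indices $\epsilon_\mu,\epsilon_m$, writes the unfolded ODE for the difference $\mathcal{T}^b_{\epsilon_\mu}w_{\epsilon_\mu\delta}-\mathcal{T}^b_{\epsilon_m}w_{\epsilon_m\delta}$, uses the Lipschitz continuity of $\psi_\delta$ and the strong convergence of the unfolded $R$-terms to run a Gronwall estimate, and concludes that the unfolded sequence is \emph{Cauchy} in $L^2(S\times\Omega\times\Gamma)$; the strong limit $\zeta$ is then identified with the two-scale limit $w_\delta$ via Lemma \ref{thm:PU1} and Lemma \ref{lemma:APB2}. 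Your version runs the same Gronwall estimate but against the solution of the macroscopic ODE \eqref{eqn:M12S2}. Both arguments rest on identical ingredients (monotone/Lipschitz $\psi_\delta$, part (i), the unfolded initial data), and your observation that monotonicity of $\psi_\delta$ alone lets you drop that term with a sign is if anything cleaner than invoking the $1/\delta$ Lipschitz constant. You also correctly identify the key obstruction that the $H^{1,2}(S;L^2(\Omega\times\Gamma))$ bound gives only weak compactness, so a quantitative contraction argument is unavoidable.

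The one point you must repair is a logical-ordering issue: in the paper, the limit ODE \eqref{eqn:M12S2} is \emph{derived} in the $\epsilon\to0$ part of Section 5 \emph{using} Lemma \ref{lemma:R2S1}(ii), precisely because passing to the limit in the nonlinear term $\psi_\delta(w_{\epsilon\delta})$ requires the strong convergence of $\mathcal{T}^b_\epsilon(w_{\epsilon\delta})$ that this lemma provides. As written, ``subtract the limiting ODE \eqref{eqn:M12S2}'' is therefore circular. The fix is easy and stays within your framework: define the comparison function $\tilde w_\delta$ as the unique (Picard--Lindel\"of) solution of $\partial_t\tilde w_\delta=k_d(R(u_\delta,v_\delta)-\psi_\delta(\tilde w_\delta))$, $\tilde w_\delta(0)=w_0$, where $u_\delta,v_\delta$ are already available from Lemma \ref{lemma:APB2}; run your Gronwall estimate against $\tilde w_\delta$ to get strong convergence of $\mathcal{T}^b_\epsilon(w_{\epsilon\delta})$ to $\tilde w_\delta$; and only then invoke Lemma \ref{thm:PU1} together with Lemma \ref{lemma:APB2}(ix) to conclude $\tilde w_\delta=w_\delta$. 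With that reordering your proof is complete and equivalent in strength to the paper's Cauchy-sequence argument.
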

	\begin{proof}
		$(i)$ We proceed similarly as in Lemma \ref{lemma:R2S} to establish the result. \\
		$(ii)$ We get the unfolded version of the ODE as $\eqref{eqn:OU}-\eqref{eqn:MDU}$ by applying the boundary unfolding operator \eqref{eqn:BU}. Now for all $\mu,m\in\mathbb{N}$ with $\mu>m$ arbitrary the difference $(\mathcal{T}^b_{\epsilon_\mu}w_{\epsilon_{\mu}\delta}-\mathcal{T}^b_{\epsilon_m}w_{\epsilon_{m}\delta})$ satisfies
		\begin{align*}
			\int_{0}^{t}\frac{\partial}{\partial t}\|\mathcal{T}^b_{\epsilon_\mu}w_{\epsilon_{\mu}\delta}-&\mathcal{T}^b_{\epsilon_m}w_{\epsilon_{m}\delta}\|^2_{\Omega\times\Gamma}\\
			=2k_d\int_{0}^{t}\int_{\Omega\times\Gamma}&(\mathcal{T}^b_{\epsilon_\mu}R(u_{\epsilon_{\mu}\delta},v_{\epsilon_{\mu}\delta})-\mathcal{T}^b_{\epsilon_{m}\delta}R(u_{\epsilon_{m}\delta},v_{\epsilon_{m}\delta}))(\mathcal{T}^b_{\epsilon_\mu}w_{\epsilon_{\mu}\delta}-\mathcal{T}^b_{\epsilon_m}w_{\epsilon_{m}\delta})dxd\sigma_ydt\\
			-2k_d\int_{0}^{t}\int_{\Omega\times\Gamma}&(\psi_\delta(\mathcal{T}^b_{\epsilon_\mu}w_{\epsilon_{\mu}\delta})-\psi_\delta(\mathcal{T}^b_{\epsilon_m}w_{\epsilon_{m}\delta}))(\mathcal{T}^b_{\epsilon_\mu}w_{\epsilon_{\mu}\delta}-\mathcal{T}^b_{\epsilon_m}w_{\epsilon_{m}\delta})dxd\sigma_ydt.
		\end{align*}
		By the definition \eqref{eqn:MDU}, the function $\psi_\delta$ is Lipschitz that means
		\begin{align*}
			\|\mathcal{T}^b_{\epsilon_\mu}w_{\epsilon_{\mu}\delta}&(t)-\mathcal{T}^b_{\epsilon_m}w_{\epsilon_{m}\delta}(t)\|^2_{\Omega\times\Gamma}\le \|\mathcal{T}^b_{\epsilon_\mu}w_{\epsilon_{\mu}\delta}(0)-\mathcal{T}^b_{\epsilon_m}w_{\epsilon_{m}\delta}(0)\|^2_{\Omega\times\Gamma}\\
			&+k_d^2\int_{0}^{t}\|\mathcal{T}^b_{\epsilon_\mu}R(u_{\epsilon_{\mu}\delta},v_{\epsilon_{\mu}\delta})-\mathcal{T}^b_{\epsilon_{m}\delta}R(u_{\epsilon_{m}\delta},v_{\epsilon_{m}\delta})\|^2_{\Omega\times\Gamma}dt\\
			&+(1+2k_dL_{Lip})\int_{0}^{t}\|\mathcal{T}^b_{\epsilon_\mu}w_{\epsilon_{\mu}\delta}(\theta)-\mathcal{T}^b_{\epsilon_m}w_{\epsilon_{m}\delta}(\theta)\|^2_{\Omega\times\Gamma}d\theta.
		\end{align*} 
		By following the same steps as in Lemma \ref{lemma:R2S}, we can derive
		\begin{align}\label{eqn:NE6}
			\|\mathcal{T}^b_{\epsilon_\mu}w_{\epsilon_{\mu}\delta}(t)-\mathcal{T}^b_{\epsilon_m}w_{\epsilon_{m}\delta}(t)\|^2_{\Omega\times\Gamma}&\le C[({\epsilon_\mu}^n+{\epsilon_m}^n)+k_d^2T(\epsilon_\mu+\epsilon_m)]e^{(1+2k_dL_{Lip})T}\notag\\
			&\rightarrow 0 \text{  as   } \mu,m\rightarrow \infty.
		\end{align}
		Hence, $\eqref{eqn:NE6}$ establishing the strong convergence of $\{\mathcal{T}^b_{\epsilon}(w_{\epsilon\delta})\}$ in $L^2(S\times\Omega\times\Gamma)$ and suppose $\mathcal{T}^b_{\epsilon}(w_{\epsilon\delta})$ is strongly convergent to some $\zeta$ in $L^2(S\times\Omega\times\Gamma)$. Then, as an implication of Lemma \ref{thm:PU1}, we have $w_{\epsilon\delta}\overset{2}{\rightharpoonup} \zeta$ in $L^2(S\times\Omega\times\Gamma)$ but $w_{\epsilon\delta}\overset{2}{\rightharpoonup} w_\delta$ in $L^2(S\times\Omega\times\Gamma)$ by Lemma \ref{lemma:APB2}. That is to say that $\mathcal{T}^b_{\epsilon}(w_{\epsilon\delta})\rightarrow w_\delta$ in $L^2(S\times\Omega\times\Gamma)$. Now, as $\psi_\delta$ is continuous so $\psi_\delta(\mathcal{T}^b_{\epsilon}(w_{\epsilon\delta}))\rightarrow \psi_\delta(w_\delta)$ in $L^2(S\times\Omega\times\Gamma)$, i.e. $\psi_\delta(\mathcal{T}^b_{\epsilon}(w_{\epsilon\delta}))\overset{2}{\rightharpoonup}\psi_\delta(w_\delta)$ in $L^2(S\times\Omega\times\Gamma)$.
	\end{proof}
	\textbf{$\epsilon\rightarrow 0$ part: } Testing \eqref{eqn:M1RWF} with $\Phi_1=\phi_1(t,x)+\epsilon\psi_1(t,x,\frac{x}{\epsilon})$ where $\phi_1\in C^\infty_0(S\times\Omega)$ and $\psi_1\in C^\infty_0(S\times\Omega ; C^\infty_{per}(Y))$, we get
	\begin{align*}
		\underbrace{\int_{0}^{T}\int_{\Omega_\epsilon^p}\frac{\partial u_{\epsilon\delta}}{\partial t}[\phi_1(t,x)+\epsilon\psi_1(t,x,\frac{x}{\epsilon})]dxdt}_{I_{\delta1}}+\underbrace{\int_{0}^{T}\int_{\Omega_\epsilon^p} [D_1(x,\frac{x}{\epsilon})\nabla u_{\epsilon\delta}-\textbf{q}_\epsilon u_{\epsilon\delta}] [\nabla \phi_1+\epsilon\nabla_x \psi_1+\nabla_y\psi_1]dxdt}_{I_{\delta2}}\\
		+\underbrace{\epsilon\int_{0}^{T}\int_{\Gamma^*_\epsilon}\frac{\partial w_{\epsilon\delta}}{\partial t}[\phi_1(t,x)+\epsilon\psi_1(t,x,\frac{x}{\epsilon})]d\sigma_xdt}_{I_{\delta3}}=0.
	\end{align*}
	We are now going to pass the limit for each term individually.
	\begin{align*}
		\lim_{\epsilon\rightarrow 0} I_{\delta1}&=-\lim_{\epsilon\rightarrow 0}\int_{0}^{T}\int_{\Omega_\epsilon^p}u_{\epsilon\delta}(t,x)[\frac{\partial \phi_1}{\partial t}+\epsilon\frac{\partial \psi_1}{\partial t}]dxdt\\
		&=-\lim_{\epsilon\rightarrow 0}\int_{0}^{T}\int_{\Omega}\chi(\frac{x}{\epsilon})u_{\epsilon\delta}(t,x)[\frac{\partial \phi_1}{\partial t}+\epsilon\frac{\partial \psi_1}{\partial t}]dxdt=-\int_{0}^{T}\int_{\Omega}\int_{Y} \chi(y)u_\delta(t,x)\frac{\partial \phi_1}{\partial t}dxdydt,\\
		\lim_{\epsilon\rightarrow 0} I_{\delta2}&=\lim_{\epsilon\rightarrow 0}\int_{0}^{T}\int_{\Omega}\chi(\frac{x}{\epsilon})[D_1(x,\frac{x}{\epsilon})-\textbf{q}_\epsilon u_{\epsilon\delta}]\nabla u_{\epsilon\delta} [\nabla \phi_1+\epsilon\nabla \psi_1+\nabla_y\psi_1]dxdt\\
		&=\int_{0}^{T}\int_{\Omega}\int_{Y} \chi(y)[D_1(x,y)(\nabla u_\delta(t,x)+\nabla_y u_{\delta1}(t,x,y))-\textbf{q}u_\delta](\nabla \phi_1+\nabla_y \psi_1)dxdydt,\\
		\lim_{\epsilon\rightarrow 0} I_{\delta3}&=\lim_{\epsilon\rightarrow 0}\epsilon\int_{0}^{T}\int_{\Gamma_\epsilon^*}\frac{\partial w_{\epsilon\delta}}{\partial t}[\phi_1(t,x)+\epsilon\psi_1(t,x,\frac{x}{\epsilon})]d\sigma_xdt=\int_{0}^{T}\int_{\Omega}\int_{\Gamma}\frac{\partial w_\delta}{\partial t}\phi_1 dxd\sigma_ydt.
	\end{align*}
	Putting all together we obtain
	\begin{align}\label{eqn:HE2}
		-\int_{0}^{T}\int_{\Omega}u_\delta(t,x)\frac{\partial \phi_1}{\partial t}dxdt+\frac{1}{|Y^p|}\int_{0}^{T}\int_{\Omega}\int_{Y^p}& [D_1(x, y)(\nabla u_\delta+\nabla_y u_{\delta1})-\textbf{q}u_\delta](\nabla \phi_1+\nabla_y \psi_1)dxdydt\notag\\
		&+\int_{0}^{T}\int_{\Omega}\int_{\Gamma}\frac{1}{|Y^p|}\frac{\partial w_\delta}{\partial t}\phi_1 dxd\sigma_ydt=0.
	\end{align}
	Now, for $\phi_1\equiv 0$, \eqref{eqn:HE2} becomes
	\begin{align*}
		\int_{0}^{T}\int_{\Omega}\int_{Y^p} [D_1(x,y)(\nabla u_\delta(t,x)+\nabla_y u_{\delta1}(t,x,y))-\textbf{q}u_\delta]\nabla_y \psi_1dxdydt=0.
	\end{align*} 
	Depending on the choice of the function $u_{\delta1}$ given by $\eqref{eqn:NE7}$, we obtain $k_j(t,x,y)(j=1,2,...,n)$ is the $Y-$periodic solution of the cell problems \eqref{eqn:CE1}. Next, to get the homogenized equation inserting $\psi_1\equiv 0$ in \eqref{eqn:HE2} we are led to
	\begin{align*}
		\int_{0}^{T}\int_{\Omega}\frac{\partial u_\delta(t,x) }{\partial t}\phi_1dxdt+\int_{0}^{T}\int_{\Omega}\int_{Y^p} \frac{D_1(x,y)}{|Y^p|}\left(\delta_{ij}+\sum_{i,j=1}^{n}\frac{\partial k_j}{\partial y_i}\right)\frac{\partial u_\delta(t,x)}{\partial x_j}\nabla_x \phi_1dxdydt\notag\\
		-\frac{1}{|Y^p|}\int_{0}^{T}\int_{\Omega\times Y^p}[\textbf{q}u_\delta-D_1(x, y)\nabla_y k_0 u_\delta]\nabla_x \phi_1 dxdydt+\int_{0}^{T}\int_{\Omega} P_\delta(t,x)\phi_1 dxdt=0,
	\end{align*}
	which can be written as 
	\begin{align*}
		\int_{0}^{T}\int_{\Omega}\left[\frac{\partial u_\delta(t,x) }{\partial t}-\nabla.(A\nabla u_\delta-(\bar{\textbf{q}}-\tilde{\textbf{q}}_0)u_\delta)+P_\delta(t,x)\right]\phi_1dxdt=0,\text{ for all }\phi_1\in C^\infty_0(S\times\Omega),
	\end{align*}
	where
	\begin{align*}
		A=(a_{ij})_{1\le i,j \le n}=\int_{Y^p}\frac{D_1(x,y)}{|Y^p|}\left(\delta_{ij}+\sum_{i,j=1}^{n}\frac{\partial k_j}{\partial y_i}\right)dy, \quad P_\delta(t,x)=\int_{\Gamma}\frac{1}{|Y^p|}\frac{\partial w_\delta}{\partial t}\;d\sigma_y.
	\end{align*}
	Therefore, the strong form of the macroscopic equation for the mobile species $A$ is given by $\eqref{eqn:M1S2}$. Likewise replacing $\theta$ by $\Phi_2=\phi_2(t,x)+\epsilon\psi_2(t,x,\frac{x}{\epsilon})$, where $\phi_2\in C^\infty_0(S\times\Omega)$ and $\psi_2\in C^\infty_0(S\times\Omega ; C^\infty_{per}(Y))$ in \eqref{eqn:M2RWF} and repeating the same arguments as above we get $\eqref{eqn:M2S2}$. Finally, we choose the test function $\rho(t,x,\frac{x}{\epsilon})\in C^\infty_0(S\times\Omega ; C^\infty_{per}(Y))$ for the ODE \eqref{eqn:M12RWF} and obtain
	\begin{align*}
		\int_{0}^{T}\int_{\Gamma_\epsilon^*}\frac{\partial w_{\epsilon\delta}}{\partial t}\rho(t,x,\frac{x}{\epsilon})d\sigma_xdt=k_d\int_{0}^{T}\int_{\Gamma_\epsilon^*}(R(u_{\epsilon\delta},v_{\epsilon\delta})-\psi_\delta(w_{\epsilon\delta}))\rho(t,x,\frac{x}{\epsilon})d\sigma_xdt.
	\end{align*}
	Passing to the two-scale limit using Lemma \ref{lemma:APB2} and Lemma \ref{lemma:R2S1} yields
	\begin{align*}
		\int_{0}^{T}\int_{\Omega}\int_{\Gamma} \frac{\partial w_\delta}{\partial t}\rho(t,x,y)dxd\sigma_ydt=k_d\int_{0}^{T}\int_{\Omega}\int_{\Gamma}(R(u_\delta,v_\delta)-\psi_\delta(w_\delta))\rho(t,x,y)dxd\sigma_ydt,
	\end{align*}
	whose strong form is given by $\eqref{eqn:M12S2}$.
	\begin{lemma}
		For each $\delta>0$ the solutions $(u_\delta,v_\delta, w_\delta)$ of the problem $\eqref{eqn:M1S2}-\eqref{eqn:M12S2}$ satisfies the following estimate
		\begin{align}\label{eqn:ME3}
			&	\|u_\delta\|_{L^\infty(S; L^2(\Omega))}+\|u_\delta\|_{(\Omega)^T}+\|\nabla u_\delta\|_{(\Omega)^T}+\left\lVert \frac{\partial u_\delta}{\partial t}\right\rVert_{L^2(S;H^{1,2}(\Omega)^*)}+\|v_\delta\|_{L^\infty(S; L^2(\Omega))}+\|v_\delta\|_{(\Omega)^T}\notag\\
			&\quad\quad+\|\nabla v_\delta\|_{(\Omega)^T}+\left\lVert \frac{\partial v_\delta}{\partial t}\right\rVert_{L^2(S;H^{1,2}(\Omega)^*)}+\|w_\delta\|_{(\Omega\times\Gamma)^T}+\left\lVert\frac{\partial w_\delta}{\partial t}\right\rVert_{(\Omega\times\Gamma)^T}+\|P_\delta\|_{(\Omega)^T}\le C,
		\end{align}
		where the constant $C$ is independent of $\delta$.
	\end{lemma}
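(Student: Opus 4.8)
The plan is to reproduce, now directly on the upscaled system \eqref{eqn:M1S2}--\eqref{eqn:M12S2}, the same sequence of energy estimates carried out for the regularized microproblem in Lemma~\ref{lemma:M1LB}. The only place where the regularization parameter enters is through $\psi_\delta$, and the decisive observation is that $0\le \psi_\delta\le 1$ together with the bound $R(u_\delta,v_\delta)\le \tfrac{k}{4}$ from \eqref{eqn:RI} hold for \emph{every} $\delta>0$. Hence, provided every constant produced by the trace inequalities, Young's inequality and Gronwall's lemma is traced back to $\alpha$ (the common ellipticity constant of $A$ and $B$), to the boundedness of $A,B,\bar{\textbf{q}},\tilde{\textbf{q}}_0$, and to these two $\delta$-free bounds, the resulting constant $C$ will be independent of $\delta$.

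First I would treat the ODE \eqref{eqn:M12S2}. Since $|R(u_\delta,v_\delta)-\psi_\delta(w_\delta)|\le 1+\tfrac{k}{4}$ pointwise, the equation gives $|\partial_t w_\delta|\le k_d(1+\tfrac{k}{4})$ on $S\times\Omega\times\Gamma$, which immediately yields $\|\partial_t w_\delta\|_{(\Omega\times\Gamma)^T}\le C$ and, after integrating $w_\delta(t)=w_0+\int_0^t\partial_t w_\delta$ and using $w_0\in L^\infty$, the bound $\|w_\delta\|_{(\Omega\times\Gamma)^T}\le C$. Crucially, the same pointwise bound controls the coupling source $P_\delta(t,x)=\int_\Gamma\frac{1}{|Y^p|}\partial_t w_\delta\,d\sigma_y$, giving $|P_\delta|\le \frac{|\Gamma|}{|Y^p|}k_d(1+\tfrac{k}{4})$ and hence $\|P_\delta\|_{(\Omega)^T}\le C$, all with $\delta$-free constants. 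This step must come first because $P_\delta$ is a forcing term in the equations for $u_\delta$ and $v_\delta$.

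Next I would test \eqref{eqn:M1S2} with $u_\delta$ (formally $\phi=\chi_{(0,t)}u_\delta$) and \eqref{eqn:M2S2} with $v_\delta$. The ellipticity of $A$ produces the good term $2\alpha\|\nabla u_\delta\|^2_{(\Omega)^t}$; the convective contribution $\int (\bar{\textbf{q}}-\tilde{\textbf{q}}_0)u_\delta\cdot\nabla u_\delta$ is handled as in \eqref{eqn:v}, using $\nabla_x\cdot\bar{\textbf{q}}=0$ and $\bar{\textbf{q}}\cdot\vec{n}=0$ from \eqref{eqn:qs1} so that the $\bar{\textbf{q}}$-part vanishes, while the bounded $\tilde{\textbf{q}}_0$-part is absorbed by Young's inequality into the dissipation and the $L^2$-norm. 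The boundary fluxes $d/|Y^p|$ and $e/|Y^p|$ are controlled by the trace inequality and Young's inequality exactly as the outer-boundary terms in Lemma~\ref{lemma:M1LB}, and the $P_\delta$-term is absorbed using the bound from the previous step. Choosing the Young constants as small multiples of $\alpha$ to keep the gradient term on the left and invoking Gronwall's inequality then gives $\|u_\delta\|_{L^\infty(S;L^2(\Omega))}$, $\|u_\delta\|_{(\Omega)^T}$ and $\|\nabla u_\delta\|_{(\Omega)^T}$ bounded; the identical argument with $B$ handles $v_\delta$.

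Finally I would recover the dual-norm bounds on the time derivatives: from the weak form one estimates $|\langle\partial_t u_\delta,\phi\rangle|$ by the diffusion, convection, reaction/$P_\delta$ and boundary terms, each already controlled, and taking the supremum over $\|\phi\|_{H^{1,2}(\Omega)}\le 1$ followed by integration in time gives $\|\partial_t u_\delta\|_{L^2(S;H^{1,2}(\Omega)^*)}\le C$, and likewise for $v_\delta$. The main obstacle is not any single estimate---each mirrors Lemma~\ref{lemma:M1LB}---but the bookkeeping needed to certify that \emph{every} constant is $\delta$-free; this rests entirely on the fact that the regularization enters only through the uniformly bounded term $\psi_\delta\in[0,1]$, so that the ODE step delivers a $\delta$-independent bound on $P_\delta$ and the remaining parabolic estimates never see $\delta$ again.
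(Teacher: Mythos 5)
Your proposal follows the same route as the paper's proof: first extract $\delta$-free bounds on $w_\delta$, $\partial_t w_\delta$ and $P_\delta$ from the ODE using $0\le\psi_\delta\le 1$ and $R\le k/4$, then run the standard energy estimate on the parabolic equations (ellipticity of $A,B$, vanishing of the $\bar{\textbf{q}}$-convection term via \eqref{eqn:v}, trace and Young's inequalities, Gronwall), and finally read off the dual-norm bounds on the time derivatives from the weak form. The only cosmetic difference is that you bound $w_\delta$ by directly integrating $\partial_t w_\delta$ where the paper invokes Gronwall; both are valid and the proof is correct.
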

	\begin{proof}
		We test the ODE $\eqref{eqn:M12S2}$ with $w_\delta$ and using \eqref{eqn:RI} obtain the inequality
		\begin{align*}
			\|w_\delta(t)\|^2_{\Omega\times\Gamma}\le \|w_0\|^2_{\Omega\times\Gamma}+k_d^2(1+\frac{k}{4})^2T|\Omega||\Gamma|+\int_{0}^{t}\|w_\delta(\theta)\|^2_{\Omega\times\Gamma}d\theta.
		\end{align*}
		Gronwall's inequality gives
		\begin{align*}
			&\|w_\delta(t)\|^2_{\Omega\times\Gamma}\le M_w^{*}e^t,\text{ where } M_w^{*}=\|w_0\|^2_{\Omega\times\Gamma}+k_d^2(1+\frac{k}{4})^2T|\Omega||\Gamma|.\\
			\text{So, } &\|w_\delta\|_{(\Omega\times\Gamma)^T}=\left(\int_{0}^{T}	\|w_\delta(s)\|^2_{\Omega\times\Gamma}ds\right)^{\frac{1}{2}}\le C.
		\end{align*}
		We now multiply the ODE \eqref{eqn:M12S2} by $\frac{\partial w_\delta}{\partial t}$ and integrate over $S\times\Omega\times\Gamma$ to get
		\begin{align*}
			\left\lVert \frac{\partial w_\delta}{\partial t}\right\rVert_{(\Omega\times\Gamma)^T}\le k_d(1+\frac{k}{4})\sqrt{T|\Omega||\Gamma|}=C.
		\end{align*} 
		Also,
		\begin{align*}
			|P_\delta(t,x)|^2=\frac{k_d^2}{|Y^p|^2}\left| \int_{\Gamma}(R(u_\delta,v_\delta)-\psi_\delta(w_\delta))d\sigma_y\right|^2\le \frac{k_d^2|\Gamma|^2}{|Y^p|^2}(1+\frac{k}{4})^2.
		\end{align*}
		Hence, $\|P_\delta\|_{(\Omega)^T}\le C$. Choosing $u_\delta$ as a test function in $\eqref{eqn:M1S2}$ and using the ellipticity of the matrix $A$ we deduce that
		\begin{align}\label{eqn:ME1}
			\|u_\delta(t)\|^2_{\Omega}&+2(\alpha_1-\gamma C-\gamma_1)\|\nabla u_\delta\|^2_{(\Omega)^T}\le\|u_0\|^2_{\Omega}+ \|P_\delta\|^2_{(\Omega)^T}\notag\\
			&+\frac{1}{2\gamma}(\|d\|^2_{(\partial\Omega_{in})^T}+\|e\|^2_{(\partial\Omega_{out})^T})+(1+\frac{\|\tilde{\textbf{q}}_0\|^2_{L^\infty(S\times\Omega)}}{2\gamma_1}+2\gamma C)\int_{0}^{t}\|u_\delta(\theta)\|^2_{\Omega}d\theta,
		\end{align}
		since $\int_{0}^{t}\int_{\Omega}\bar{\textbf{q}}.\nabla u_\delta u_\delta dxdt =0$ by \eqref{eqn:v}.
		We choose $\gamma=\frac{\alpha_1}{2C}$ and $\gamma_1=\frac{\alpha_1}{2}$, then by Gronwall's inequality, we obtain $\|u_\delta(t)\|^2_{\Omega}\le M_u^*e^{(1+\alpha_1)t}\implies \|u_\delta\|_{(\Omega)^T}\le C.$
		% \begin{align*} 
			% 	\|u_\delta(t)\|^2_{\Omega}\le M_u^*e^{(1+\alpha_1)t}\implies \|u_\delta\|_{(\Omega)^T}\le C.
			% \end{align*}
		By $\eqref{eqn:ME1}$, for $\gamma=\frac{\alpha_1}{4C}$ and $\gamma_1=\frac{\alpha_1}{4}$ we also get the estimate $\|\nabla u_\delta\|_{(\Omega)^T}\le C$. Again, $\|u_\delta\|_{L^\infty(S; L^2(\Omega))}=\underset{t\in S}{\text{esssup}}\;\|u_\delta(t)\|_{L^2(\Omega)}\le CT.$
		% \begin{align*}
			% 	\|u_\delta\|_{L^\infty(S; L^2(\Omega))}=\underset{t\in S}{\text{esssup}}\;\|u_\delta(t)\|_{L^2(\Omega)}\le CT.
			% \end{align*}
		We calculate the PDE $\eqref{eqn:M1S2}$ as 
		\begin{align*}
			&\left| \langle \frac{\partial u_\delta}{\partial t},\phi\rangle_{\Omega}\right|\le \left\{M_1 \|\nabla u_\delta\|_{\Omega}+\|\tilde{\textbf{q}}_0\|_{L^\infty(\Omega)}\|u_\delta\|_{\Omega}\|\nabla u_\delta\|_{\Omega}+\|P_\delta\|_{\Omega}+C(\|d\|_{\partial\Omega_{in}}+\|e\|_{\partial\Omega_{out}})\right\}\|\phi\|_{H^{1,2}(\Omega)}\\
			\implies &\left\lVert \frac{\partial u_\delta}{\partial t}\right\rVert_{H^{1,2}(\Omega)^{*}}\le C.
		\end{align*}
		Next, we integrate w.r.t. $t$ to obtain
		\begin{align*}
			\left\lVert \frac{\partial u_\delta}{\partial t}\right\rVert_{L^2(S;H^{1,2}(\Omega)^{*})}\le C.
		\end{align*}
		Following the same line of arguments as before we have for type-II mobile species
		\begin{align*}
			\|v_\delta\|_{L^\infty(S; L^2(\Omega))}+\|v_\delta\|_{(\Omega)^T}+\|\nabla v_\delta\|_{(\Omega)^T}+\left\lVert \frac{\partial v_\delta}{\partial t}\right\rVert_{L^2(S;H^{1,2}(\Omega)^{*})}\le C.
		\end{align*}
		Finally, summarizing all the estimates we get \eqref{eqn:ME3}.
	\end{proof}
	
	\begin{lemma}\label{lemma:ELE}
		The solutions $u_\delta, v_\delta$ of the PDEs $\eqref{eqn:M1S2}$ and $\eqref{eqn:M2S2}$ are strongly converges to $u$ and $v$ respectively in $L^2(S;L^2(\Omega))$.
	\end{lemma}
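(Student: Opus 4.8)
The plan is to obtain the strong convergence purely from the $\delta$-uniform a-priori estimate \eqref{eqn:ME3} via an Aubin--Lions--Simon compactness argument, exactly in the spirit of the $\delta\to0$ step already carried out for Theorem \ref{thm:1}. First I would record that \eqref{eqn:ME3} bounds $(u_\delta)$ and $(v_\delta)$ uniformly in $L^2(S;H^{1,2}(\Omega))$ while the distributional time derivatives $(\partial_t u_\delta)$ and $(\partial_t v_\delta)$ are bounded uniformly in $L^2(S;H^{1,2}(\Omega)^*)$. By reflexivity and Banach--Alaoglu one extracts a subsequence, still labelled by $\delta$, with $u_\delta\rightharpoonup u$ and $v_\delta\rightharpoonup v$ weakly in $L^2(S;H^{1,2}(\Omega))$ and $\partial_t u_\delta\rightharpoonup\partial_t u$, $\partial_t v_\delta\rightharpoonup\partial_t v$ weakly in $L^2(S;H^{1,2}(\Omega)^*)$, for some limits $u,v\in H^{1,2}(S;H^{1,2}(\Omega)^*)\cap L^2(S;H^{1,2}(\Omega))$.

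The key step is to upgrade this weak convergence to strong convergence in $L^2(S\times\Omega)$. Since $\Omega$ is bounded with Lipschitz boundary, the embedding $H^{1,2}(\Omega)\hookrightarrow\hookrightarrow L^2(\Omega)$ is compact and $L^2(\Omega)\hookrightarrow H^{1,2}(\Omega)^*$ continuously, so the uniform bounds above place $(u_\delta)$ within the hypotheses of the compactness results of \cite{simon1986compact} (Corollary 4 and Lemma 9, as already invoked for the $\delta\to0$ limit of $(\mathcal{P}_\epsilon)$). Hence $(u_\delta)$ is relatively compact in $L^2(S;L^2(\Omega))=L^2(S\times\Omega)$; a subsequence converges strongly, and since the strong limit must agree with the weak limit $u$ already identified, the whole sequence satisfies $u_\delta\to u$ in $L^2(S\times\Omega)$. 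The identical argument gives $v_\delta\to v$ strongly in $L^2(S\times\Omega)$.

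Finally, these strong convergences are exactly what is needed to pass $\delta\to0$ in the weak forms \eqref{eqn:M1S2}--\eqref{eqn:M2S2} towards \eqref{eqn:M1S}--\eqref{eqn:M2S}: the diffusion terms pass by the weak convergence $\nabla u_\delta\rightharpoonup\nabla u$ against the $\delta$-independent matrices $A,B$, the advection terms pass by combining strong $L^2(S\times\Omega)$ convergence of $u_\delta,v_\delta$ with the fixed coefficient $\bar{\textbf{q}}-\tilde{\textbf{q}}_0$, and since $u_\delta,v_\delta$ depend only on $(t,x)$ the reaction term converges by the local Lipschitz bound \eqref{eqn:LL}, yielding $R(u_\delta,v_\delta)\to R(u,v)$ in $L^2(S\times\Omega\times\Gamma)$. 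The step I expect to be the genuine obstacle, beyond the routine compactness that is the content of this lemma, is the characterization of the limit of the regularized dissolution term: as $\delta\to0$ the graph $\psi_\delta$ degenerates back to the multivalued, discontinuous $\psi$, so along the ODE \eqref{eqn:M12S2} one must show $\psi_\delta(w_\delta)\to z$ with $z\in\psi(w)$. This is closed using strong convergence of $w_\delta$ (extracted from the ODE structure and the bounds on $w_\delta,\partial_t w_\delta$ in \eqref{eqn:ME3}), passage to a pointwise-a.e. convergent subsequence, and the case dichotomy $w>0$ versus $w=0$, precisely mirroring the $\epsilon\to0$ treatment of the ODE in the proof of Theorem \ref{thm:1}.
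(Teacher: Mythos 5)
Your argument is correct and is essentially the paper's own proof: the paper disposes of this lemma in one line by invoking the compactness result of Lemma \ref{lemma:1} (bounds on $u_\delta,v_\delta$ in $L^\infty(S;L^2(\Omega))\cap L^2(S;H^{1,2}(\Omega))$ plus bounds on the time derivatives in $L^2(S;H^{1,2}(\Omega)^*)$ from \eqref{eqn:ME3} yield strong $L^2(S;L^2(\Omega))$ convergence), and your Aubin--Lions--Simon route via \cite{simon1986compact} is the same mechanism with a different citation. Your final paragraph on passing $\delta\to 0$ in the equations and characterizing the limit of $\psi_\delta(w_\delta)$ is not part of this lemma but correctly anticipates the paper's subsequent ``$\delta\rightarrow 0$ part.''
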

	\begin{proof}
		This is a consequence of Lemma \ref{lemma:1} and the estimate \eqref{eqn:ME3}.
	\end{proof}
	We compute all the a-priori bounds needed to pass $\delta\rightarrow 0$. Following the idea of \cite{van2004crystal}, we define the function $z_\delta\in L^\infty(S\times\Omega\times\Gamma)$ as
	\begin{align*}
		z_\delta(t,x,y)=\psi_\delta (w_\delta(t,x,y)) \text{ for a.e. } (t,x,y)\in S\times\Omega\times\Gamma.
	\end{align*}
	% Now, the following convergences follows from the estimate 
	
	Then, from \eqref{eqn:ME3} and Lemma \ref{lemma:ELE}, we have the following convergences: 
	
	\begin{itemize}
		\item $u_\delta \rightharpoonup u \text{ in } L^2(S; H^{1,2}(\Omega)), \quad \frac{\partial u_\delta}{\partial t} \rightharpoonup \frac{\partial u}{\partial t} \text{ in } L^2(S; H^{1,2}(\Omega)^*)$,
		\item $v_\delta \rightharpoonup v \text{ in } L^2(S; H^{1,2}(\Omega)), \quad \frac{\partial v_\delta}{\partial t} \rightharpoonup \frac{\partial v}{\partial t} \text{ in } L^2(S; H^{1,2}(\Omega)^*)$,
		\item $u_\delta\rightarrow u$ in $L^2(S;L^2(\Omega))$, $\quad\;$ $v_\delta\rightarrow v$ in $L^2(S;L^2(\Omega))$,
		\item $w_\delta \rightharpoonup w \text{ in } L^2(S\times\Omega\times\Gamma), \quad \frac{\partial w_\delta}{\partial t} \rightharpoonup \frac{\partial w}{\partial t} \text{ in } L^2(S\times\Omega\times\Gamma)$,
		\item $P_\delta=\int_{\Gamma}\frac{\partial w_\delta}{\partial t}d\sigma_y\rightharpoonup \int_{\Gamma} \frac{\partial w_\delta}{\partial t}d\sigma_y=P$ in $L^2(S\times\Omega)$,
		\item $z_\delta\overset{*}{\rightharpoonup}z$ in $L^\infty(S\times\Omega\times\Gamma)$.
	\end{itemize}
	\begin{lemma}
		The rate term $R(u_\delta,v_\delta)\rightarrow R(u,v)$ in $L^2(S\times\Omega)$.
	\end{lemma}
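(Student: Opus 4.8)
The plan is to combine the strong $L^2(S\times\Omega)$ convergences $u_\delta\to u$ and $v_\delta\to v$ supplied by Lemma~\ref{lemma:ELE} with the continuity and the uniform boundedness of the Langmuir rate $R$. First I would record that $R$, although defined piecewise, is in fact continuous on all of $\mathbb{R}^2$: on the open quadrant $(0,\infty)^2$ it equals the smooth expression $k\frac{k_1k_2\,uv}{(1+k_1u+k_2v)^2}$, and since this quantity tends to $0$ whenever $u\to 0^+$ or $v\to 0^+$, it matches the value $0$ prescribed on the complement of $[0,\infty)^2$, so no jump occurs across the coordinate axes.

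Next, starting from $u_\delta\to u$ and $v_\delta\to v$ in $L^2(S\times\Omega)$, I would extract a subsequence (not relabeled) along which $u_\delta\to u$ and $v_\delta\to v$ pointwise almost everywhere in $S\times\Omega$. The continuity of $R$ then yields
\begin{align*}
 R(u_\delta,v_\delta)\to R(u,v)\qquad\text{a.e. in }S\times\Omega.
\end{align*}
The decisive ingredient is the uniform bound \eqref{eqn:RI}, i.e.\ $0\le R(u_\delta,v_\delta)\le \tfrac{k}{4}$, which furnishes a constant dominating function; since $S\times\Omega$ has finite measure, the constant $\tfrac{k}{4}$ belongs to $L^2(S\times\Omega)$. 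Lebesgue's dominated convergence theorem therefore upgrades the a.e.\ convergence to $R(u_\delta,v_\delta)\to R(u,v)$ in $L^2(S\times\Omega)$ along the subsequence. Finally, a standard subsequence argument — every subsequence of $\bigl(R(u_\delta,v_\delta)\bigr)$ admits, by repeating the above, a further subsequence converging in $L^2(S\times\Omega)$ to the same limit $R(u,v)$ — promotes this to convergence of the whole sequence.

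The step I expect to require the most care is precisely the passage from pointwise to $L^2$ convergence. The Langmuir kinetics is only locally Lipschitz, and the constant $L_R$ appearing in \eqref{eqn:LL} is not uniformly bounded in $v$; consequently the naive estimate $\|R(u_\delta,v_\delta)-R(u,v)\|_{L^2}\le L_R\bigl(\|u_\delta-u\|_{L^2}+\|v_\delta-v\|_{L^2}\bigr)$ is not directly available, since in the $\delta\to 0$ regime we only dispose of $L^\infty(S;L^4(\Omega))$-type bounds on $u_\delta,v_\delta$ rather than a uniform $L^\infty$ control. It is exactly the global boundedness $R\le \tfrac{k}{4}$ that circumvents this difficulty and makes dominated convergence, rather than a Lipschitz estimate, the natural tool here.
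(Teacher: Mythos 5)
Your proof is correct, but it takes a genuinely different route from the paper. The paper's proof is a one-liner: it invokes the Lipschitz estimate \eqref{eqn:LL} to write $\|R(u_\delta,v_\delta)-R(u,v)\|_{L^2(S\times\Omega)}\le L_R\{\|u_\delta-u\|_{L^2(S\times\Omega)}+\|v_\delta-v\|_{L^2(S\times\Omega)}\}$ and then applies Lemma~\ref{lemma:ELE}. You instead pass to an a.e.\ convergent subsequence, use the continuity of $R$ and the uniform bound $0\le R\le k/4$ from \eqref{eqn:RI}, apply dominated convergence on the finite-measure set $S\times\Omega$, and recover the full sequence by the standard subsequence--of--a--subsequence argument. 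Your reservation about the Lipschitz route is well taken insofar as the expression the paper gives for $L_R(u_\epsilon,v_\epsilon)$ is unbounded as written, so its supremum is not obviously finite; your argument needs only boundedness and continuity of $R$ and is therefore more robust. That said, the paper's route is in fact salvageable: a direct computation gives $|\partial R/\partial u|=kk_1|v|\,|1+k_2v-k_1u|/(1+k_1u+k_2v)^3\le kk_1/4$ on $[0,\infty)^2$ (and similarly in $v$), so $R$ is genuinely globally Lipschitz and the one-line estimate holds with a finite constant. In short: both approaches are valid; the paper's is shorter once the global Lipschitz constant is justified, while yours trades that justification for a slightly longer but entirely elementary compactness argument.
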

	\begin{proof}
		Lipschitz continuity of $R$ gives
		\begin{align*}
			\|R(u_\delta,v_\delta)-R(u,v)\|_{L^2(S\times\Omega)}&\le L_R \{\|u_\delta-u\|_{L^2(S\times\Omega)}+\|v_\delta-v\|_{L^2(S\times\Omega)}\}\\
			&\rightarrow 0 \text{  as   }\delta\rightarrow 0 \text{ by Lemma } \ref{lemma:ELE}.
		\end{align*}
	\end{proof}
	\textbf{$\delta\rightarrow 0$ part: }
	We pass the regularization parameter $\delta$ to zero and immediately see that the limits are the weak solutions to the problem $\eqref{eqn:M1S}-\eqref{eqn:M12S}$. Only we need to give special attention to establish $z\in\psi(w)$ part. This is shown in Theorem 2.21 of \cite{van2004crystal}.\\
	\textit{\underline{Uniqueness of the macroscopic model}} : Now, we prove the uniqueness of the problem $(\mathcal{P})$. On the contrary, let $(\textbf{q}_1, u_1,v_1,w_1,z_1)$ and $(\textbf{q}_2, u_2,v_2,w_2,z_2)$ be the solutions of $(\mathcal{P})$. We define $\textbf{Q}=\textbf{q}_1-\textbf{q}_2\ge 0,      U=u_1-u_2\ge 0, V=v_1-v_2\ge0, W=w_1-w_2\ge0$ and $Z=z_1-z_2\ge 0$. Clearly, $\textbf{Q}(0, x, y)=0,  U(0,x)=V(0,x)=W(0,x,y)=Z(0,x,y)=0$ for all $x\in\Omega$ and $y\in\Gamma$. The weak formulations of the resulting equations in terms of the differences are
	\begin{align}
		&\langle \frac{\partial \textbf{Q}}{\partial t}, \psi\rangle_{(\Omega\times Y^p)^T}+\mu \langle \nabla_y \textbf{Q} : \nabla_y \psi\rangle_{(\Omega\times Y^p)^T}=0 \label{eqn:quw}\\
		&\langle \frac{\partial U}{\partial t}, \phi\rangle_{(\Omega)^T}+\langle A\nabla U-\bar{\textbf{q}}U, \nabla \phi\rangle_{(\Omega)^T}+\frac{1}{|Y^p|}\int_{0}^{T}\int_{\Omega\times\Gamma}\frac{\partial W}{\partial t}\phi dxd\sigma_ydt=0 \label{eqn:M1UWF},\\
		&\langle \frac{\partial V}{\partial t}, \theta\rangle_{(\Omega)^T}+\langle B\nabla V-\bar{\textbf{q}}V, \nabla \theta\rangle_{(\Omega)^T}+\frac{1}{|Y^p|}\int_{0}^{T}\int_{\Omega\times\Gamma}\frac{\partial W}{\partial t}\theta dxd\sigma_ydt=0\label{eqn:M2UWF},\\
		&\langle \frac{\partial W}{\partial t}, \eta\rangle_{(\Omega\times\Gamma)^T}=k_d\langle R(u_1,v_1)-R(u_2,v_2)-Z, \eta\rangle_{(\Omega\times\Gamma)^T} \label{eqn:M12UWF},
	\end{align}
	for all $\psi\in (L^2(S\times\Omega; H^{1,2}_{div}(Y)))^n$ and $(\phi,\theta,\eta)\in L^2(S; H^{1,2}(\Omega))\times L^2(S; H^{1,2}(\Omega))\times L^2(S\times\Omega\times\Gamma)).$
	We replace $\psi$ by $\chi_{(0,t)}\textbf{Q}$ in \eqref{eqn:quw} and deduce that
	\begin{align*}
		\|\textbf{Q}(t)\|^2_{\Omega\times Y^p}+2\mu \|\nabla_y \textbf{Q}\|^2_{(\Omega\times Y^p)^t}=0\implies \textbf{Q}(t)=0 \text{ for a.e. } t\in S.
	\end{align*}
	Testing the equation $\eqref{eqn:M12UWF}$ with $\eta=\chi_{(0,t)}W(t)$ and taking into account that $R$ is Lipschitz we have
	\begin{align*}
		\|W(t)\|^2_{\Omega\times\Gamma}&= 2k_dL_R\int_{0}^{t}\int_{\Omega\times\Gamma}(U+V)Wdxd\sigma_ydt-\underbrace{2k_d\int_{0}^{t}\int_{\Omega\times\Gamma}ZW dxd\sigma_ydt}_{\ge0}\\
		&\le k_d^2L_R^2\int_{0}^{t}\|(U+V)(s)\|^2_{\Omega\times\Gamma}ds+\int_{0}^{t}\|W(s)\|^2_{\Omega\times\Gamma}ds.
	\end{align*}
	As a consequence of Gronwall's inequality, we get the estimate
	\begin{align}\label{eqn:E1}
		\|W(t)\|^2_{\Omega\times\Gamma}\le C(T)\int_{0}^{t}\|(U+V)(s)\|^2_{\Omega\times\Gamma}ds.
	\end{align}
	We add the equations \eqref{eqn:M1UWF} and \eqref{eqn:M2UWF} and take $\theta=\chi_{(0,t)}\phi$ to obtain
	\begin{align*}
		\langle \frac{\partial (U+V)}{\partial t}, \phi\rangle_{(\Omega)^t}+\langle (A\nabla U+B\nabla V)-\bar{\textbf{q}}(U+V), \nabla \phi\rangle_{(\Omega)^t}+\frac{2}{|Y^p|}\int_{0}^{t}\int_{\Omega\times\Gamma}\frac{\partial W}{\partial t}\phi dxd\sigma_ydt=0.
	\end{align*} 
	Coercivity of the matrices $A$ and $B$ in combination with  $\phi=(U+V)$ gives that
	\begin{align*}
		\|(U+V)(t)\|^2_{\Omega}+2\alpha_1 \|\nabla (U+V)\|^2_{(\Omega)^t}\le \frac{4}{|Y^p|}\left|\int_{0}^{t}\int_{\Omega\times\Gamma}\frac{\partial W}{\partial t}(U+V) dxd\sigma_ydt\right|,
	\end{align*}
	since \eqref{eqn:qs1} and \eqref{eqn:v} implies that $\langle \bar{\textbf{q}}(U+V), \nabla (U+V) \rangle =0$.
	The r.h.s. can be simplified to 
	\begin{align*}
		\left|\int_{0}^{t}\int_{\Omega\times\Gamma}\frac{\partial W}{\partial t}(U+V) dxd\sigma_ydt\right| &\le k_d\int_{0}^{t}\int_{\Omega\times\Gamma}
		|R(u_1,v_1)-R(u_2,v_2)-\underbrace{Z}_{\ge 0}||U+V|dxd\sigma_ydt\\
		&\le k_d L_R\int_{0}^{t}\int_{\Omega\times\Gamma} |U+V|^2dxd\sigma_ydt,\text{  since   } R \text{ is Lipschitz }\\
		&=k_dL_R|\Gamma|\int_{0}^{t}\|U+V\|^2_{L^2(\Omega)},\text{ as } U,V \text{ are independent of }y.
	\end{align*}
	Hence, we can write
	\begin{align*}
		\|(U+V)(t)\|^2_{L^2(\Omega)}\le \frac{4k_dL_R|\Gamma|}{|Y^p|}\int_{0}^{t}\|(U+V)(s)\|^2_{L^2(\Omega)}ds.
	\end{align*}
	Gronwall's inequality and the positivity of $U$ and $V$ ensures that
	\begin{align*}
		U(t)=0 \text{  and  } V(t)=0 \text{ for a.e. } t\in S.
	\end{align*}
	Consequently, by $\eqref{eqn:E1}$ $W(t)=0$ for a.e. $t\in S$. This concludes the proof of uniqueness.
	\section{Conclusion}
	We discussed a reaction-diffusion-advection system coupled with a crystal dissolution and precipitation model and the unsteady Stokes equation in a porous medium. We analyzed the system with one mineral and two mobile species having space-dependent different diffusion coefficients. We tackle the multivalued dissolution rate term by introducing a regularization parameter $\delta$. We have employed Rothe's method to address the question of the existence of a unique global-in-time weak solution. We applied a different version of extension lemma to pass the homogenization limit $\epsilon\rightarrow0$. We also have shown that both the repeated limits exist and are equal. However, our results should encourage further work in this area. The issue of global time existence of a unique weak solution of a system of multi-species diffusion-reaction equation with different diffusion coefficients is still a challenging open problem. Much remains to be investigated in this context and our future works will address those questions.

	\section{Data availability}
	
	Data sharing not applicable to this article as no datasets were generated or analysed during the current study.
	\newpage
	
	\bibliographystyle{acm}
	\bibliography{cite1}
	\newpage
	\section*{Appendix}
	\begin{lemma}[Theorem 2.1 of \cite{meirmanov2011compactness}]\label{lemma:1}
		Let $(c_\epsilon)$ be a bounded sequence in $L^\infty(S;L^2(\Omega))\cap L^2(S;H^{1,2}(\Omega))$ and weakly convergent in $L^2(S;L^2(\Omega))\cap L^2(S;H^{1,2}(\Omega))$ to a function $c$. Suppose further that the sequence $\left(\chi_\epsilon\frac{\partial c_\epsilon}{\partial t}\right)$ is bounded in $L^2(S ;H^{1,2}(\Omega)^*)$. Then the sequence $(c_\epsilon)$ is strongly convergent to the function $c$ in $L^2(S; L^2(\Omega))$.
	\end{lemma}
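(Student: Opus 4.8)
The plan is to recognise the statement as an Aubin--Lions--Simon type compactness criterion (in the spirit of \cite{simon1986compact}): the compact spatial embedding $H^{1,2}(\Omega)\hookrightarrow\hookrightarrow L^2(\Omega)$ supplies the spatial half of the argument for free, so the only thing to establish is a modulus of continuity in time that is \emph{uniform in $\epsilon$}. Concretely, since $(c_\epsilon)$ is bounded in $L^2(S;H^{1,2}(\Omega))$, I would aim to show that the time translates satisfy $\int_0^{T-h}\|c_\epsilon(\cdot+h)-c_\epsilon(\cdot)\|_{L^2(\Omega)}^2\,dt\to 0$ as $h\to0$, in a controlled way as $\epsilon\to0$. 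The genuine obstacle is that the hypothesis only bounds $\chi_\epsilon\,\partial_t c_\epsilon$, i.e. the time derivative over the pore space, and says \emph{nothing} about $\partial_t c_\epsilon$ over the solid part $\Omega\setminus\Omega_\epsilon^p$; the whole point of the proof is to show that the global $H^{1,2}(\Omega)$-regularity couples the two.

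First I would treat the pore contribution. Because $\chi_\epsilon$ does not depend on $t$, one has $\chi_\epsilon\big(c_\epsilon(t+h)-c_\epsilon(t)\big)=\int_t^{t+h}\chi_\epsilon\,\partial_s c_\epsilon(s)\,ds$ in $H^{1,2}(\Omega)^*$. Pairing this against $c_\epsilon(t+h)-c_\epsilon(t)$ in the $H^{1,2}(\Omega)^*$--$H^{1,2}(\Omega)$ duality, applying Cauchy--Schwarz in time, and using the $L^2(S;H^{1,2}(\Omega))$-bound on $(c_\epsilon)$ gives
\begin{align*}
\int_0^{T-h}\!\!\int_{\Omega_\epsilon^p}|c_\epsilon(t+h)-c_\epsilon(t)|^2\,dx\,dt
&\le \sqrt{h}\,\|\chi_\epsilon\partial_t c_\epsilon\|_{L^2(S;H^{1,2}(\Omega)^*)}\,\|c_\epsilon(\cdot+h)-c_\epsilon(\cdot)\|_{L^2(S;H^{1,2}(\Omega))}\\
&\le C\sqrt h,
\end{align*}
with $C$ independent of $\epsilon$.

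Second, the solid contribution is controlled by the pore one. Since $Y^s\subset\subset Y$ and $c_\epsilon(t,\cdot)\in H^{1,2}(\Omega)$ has no jump across the interface, a cell-wise scaled Poincaré inequality (equivalently, the extension estimate of Lemma \ref{lemma:EL1}) yields, for every $f\in H^{1,2}(\Omega)$,
\[
\|f\|_{L^2(\Omega\setminus\Omega_\epsilon^p)}^2 \le C\epsilon^2\|\nabla f\|_{L^2(\Omega)}^2 + C\|f\|_{L^2(\Omega_\epsilon^p)}^2 ,
\]
with $C$ independent of $\epsilon$. Applying this to $f=c_\epsilon(t+h)-c_\epsilon(t)$, integrating in time, and invoking the gradient bound on $(c_\epsilon)$ together with the pore estimate above, I obtain
\[
\int_0^{T-h}\|c_\epsilon(\cdot+h)-c_\epsilon(\cdot)\|_{L^2(\Omega)}^2\,dt \le C\big(\epsilon^2+\sqrt h\big).
\]

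The remaining difficulty is that the $\epsilon^2$ defect prevents a naive uniform modulus of continuity over the whole family, so the last step is to absorb it by a time mollification. For a time mollifier $\rho_h$ I would write $c_\epsilon^{h}=\rho_h\ast_t c_\epsilon$ and use the translate estimate to get $\|c_\epsilon-c_\epsilon^{h}\|_{L^2(S\times\Omega)}\le C(\epsilon+h^{1/4})$; for each \emph{fixed} $h$ the mollified sequence has $\partial_t c_\epsilon^{h}$ bounded in $L^2(S;L^2(\Omega))$, so the classical Aubin--Lions lemma gives $c_\epsilon^{h}\to c^{h}:=\rho_h\ast_t c$ strongly in $L^2(S\times\Omega)$ as $\epsilon\to0$. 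A triangle-inequality argument, choosing first $h$ small (to make $h^{1/4}$ and $\|c^{h}-c\|_{L^2}$ small) and then $\epsilon$ small (to kill the $\epsilon$-term and the fixed-$h$ Aubin--Lions error), yields $c_\epsilon\to c$ strongly in $L^2(S\times\Omega)$, the weak limit identifying the strong one. The main obstacle is thus the absence of solid-part time regularity; it is overcome precisely by the geometric/extension inequality, which converts solid control into pore control at the price of an $\epsilon^2$ term that the mollification scheme then removes.
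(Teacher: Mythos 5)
The paper does not actually prove this lemma: it is quoted verbatim as Theorem 2.1 of \cite{meirmanov2011compactness} and used as a black box, so there is no internal proof to compare against. Your reconstruction is, however, essentially the argument behind that cited result, and I find it correct. The three ingredients are all sound: (i) the pore-space translate estimate follows from writing $\chi_\epsilon\bigl(c_\epsilon(t+h)-c_\epsilon(t)\bigr)=\int_t^{t+h}\chi_\epsilon\partial_s c_\epsilon\,ds$ in $H^{1,2}(\Omega)^*$ (legitimate because $\chi_\epsilon$ is time-independent, so $\chi_\epsilon c_\epsilon\in W^{1,2}(S;H^{1,2}(\Omega)^*)$) and pairing with $c_\epsilon(t+h)-c_\epsilon(t)$, which produces exactly $\int_{\Omega_\epsilon^p}|c_\epsilon(t+h)-c_\epsilon(t)|^2dx$; (ii) the inequality $\|f\|_{L^2(\Omega\setminus\Omega_\epsilon^p)}^2\le C\epsilon^2\|\nabla f\|_{L^2(\Omega)}^2+C\|f\|_{L^2(\Omega_\epsilon^p)}^2$ is the correct scaled cell-wise Poincar\'e estimate (provable on the unit cell by contradiction, since $Y^s\subset\subset Y$, then rescaled and summed), and it applies here because the paper's geometry makes $\Omega$ a union of whole cells, so $\Omega\setminus\Omega_\epsilon^p=\Omega_\epsilon^s$ with no boundary layer to worry about; (iii) the mollification/diagonal step correctly removes the $O(\epsilon)$ defect that blocks a naive Kolmogorov--Riesz or Simon argument, since for fixed $h$ the sequence $\partial_t c_\epsilon^h=\rho_h'\ast_t c_\epsilon$ is bounded in $L^2(S;L^2(\Omega))$ and classical Aubin--Lions applies, with the weak convergence $c_\epsilon\rightharpoonup c$ identifying the strong limit of $c_\epsilon^h$ as $c^h$. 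You correctly isolate the genuine obstacle (no time-derivative information on the solid matrix) and the mechanism that overcomes it (global spatial regularity transfers pore control to the solid part at an $O(\epsilon)$ price). The only cosmetic remark is that the $L^\infty(S;L^2(\Omega))$ hypothesis is never needed in your argument; the $L^2(S;H^{1,2}(\Omega))$ bound suffices.
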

	\begin{lemma}[cf. Proposition 1.3. in \cite{showalter1996monotone}]
		Let $B$ be a Banach space and $B_0$, $B_1$ be reflexive spaces with $B_0\subset B\subset B_1$. Suppose further that $B_0\hookrightarrow\hookrightarrow B\hookrightarrow B_1$. For $1<r, s<\infty$ and $0<T<\infty$ define $X:=\{u\in L^r(S; B_0) : \partial_t u\in L^s(S; B_1)\}.$ Then $X\hookrightarrow\hookrightarrow L^r(S; B).$
	\end{lemma}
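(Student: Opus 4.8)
The plan is to derive this Aubin--Lions--Simon embedding from an Ehrling-type interpolation inequality together with the characterization of relatively compact subsets of a Bochner space $L^r(S;B)$. The first step is to record the interpolation inequality: since $B_0\hookrightarrow\hookrightarrow B$, for every $\eta>0$ there is a constant $C_\eta>0$ with
\begin{align*}
	\|v\|_B\le \eta\,\|v\|_{B_0}+C_\eta\,\|v\|_{B_1}\qquad\text{for all }v\in B_0.
\end{align*}
This is proved by contradiction: a sequence violating it, normalized so that $\|v_k\|_B=1$, $\|v_k\|_{B_0}\le1$ and $\|v_k\|_{B_1}\to0$, would have (by compactness of $B_0\hookrightarrow\hookrightarrow B$) a subsequence converging in $B$ to a limit $v$ with $\|v\|_B=1$; but $\|v_k\|_{B_1}\to0$ together with $B\hookrightarrow B_1$ forces $v=0$, a contradiction.

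Next I would take a bounded sequence $(u_n)$ in $X$, say $\|u_n\|_{L^r(S;B_0)}+\|\partial_t u_n\|_{L^s(S;B_1)}\le M$, and use reflexivity. Because $1<r,s<\infty$ and $B_0,B_1$ are reflexive, $L^r(S;B_0)$ and $L^s(S;B_1)$ are reflexive, so a subsequence satisfies $u_n\rightharpoonup u$ in $L^r(S;B_0)$ and $\partial_t u_n\rightharpoonup\partial_t u$ in $L^s(S;B_1)$. Subtracting the limit, it suffices to prove that such a sequence with $u_n\rightharpoonup0$ converges \emph{strongly} to $0$ in $L^r(S;B)$.

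To obtain strong convergence I would verify the two hypotheses of Simon's compactness criterion for $L^r(S;B)$ (cf. \cite{simon1986compact}). For the compactness of the time--averages, the bound $\bigl\|\int_{t_1}^{t_2}u_n\,dt\bigr\|_{B_0}\le (t_2-t_1)^{1/r'}M$ shows that $\{\int_{t_1}^{t_2}u_n\,dt\}_n$ is bounded in $B_0$, hence relatively compact in $B$ by $B_0\hookrightarrow\hookrightarrow B$. For the uniform vanishing of time translations, writing $u_n(t+h)-u_n(t)=\int_t^{t+h}\partial_t u_n(\sigma)\,d\sigma$ and using H\"older gives the uniform H\"older bound $\|u_n(t+h)-u_n(t)\|_{B_1}\le M\,h^{1/s'}$, whence
\begin{align*}
	\|u_n(\cdot+h)-u_n\|_{L^r(0,T-h;B_1)}\le T^{1/r}M\,h^{1/s'}\longrightarrow0\quad(h\to0)
\end{align*}
uniformly in $n$, where $r',s'$ are the conjugate exponents. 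Inserting the interpolation inequality integrated in time, $\|u_n(\cdot+h)-u_n\|_{L^r(0,T-h;B)}\le 2\eta M+C_\eta T^{1/r}M\,h^{1/s'}$; letting $h\to0$ and then $\eta\to0$ shows the translations vanish uniformly in $L^r(0,T-h;B)$. Simon's criterion then yields relative compactness of $(u_n)$ in $L^r(S;B)$, and the already identified weak limit pins the strong limit down as $0$.

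I expect the main obstacle to be establishing (or correctly invoking) Simon's characterization of relatively compact subsets of $L^r(S;B)$ and verifying its hypotheses uniformly in $n$; the delicate point within this is the time--translation estimate, where the $L^s$-in-time bound on $\partial_t u_n$ must be converted, via the H\"older continuity $u_n\in C^{0,1/s'}([0,T];B_1)$, into the $L^r$-in-time control demanded by the criterion. As an alternative that sidesteps the abstract criterion, I could argue directly by mollifying $u_n$ in time: the derivative bound gives $u_n^h\to u_n$ uniformly in $n$ in $L^r(S;B_1)$, while for fixed $h$ the averages $u_n^h(t)$ lie in a fixed compact subset of $B$; combining these with the interpolation inequality again produces a subsequence that is Cauchy in $L^r(S;B)$.
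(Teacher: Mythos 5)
The paper offers no proof of this lemma at all — it is quoted as a known result with a pointer to Proposition 1.3 of Showalter's monograph — so there is nothing internal to compare your argument against. Your proposal is the standard (and correct) Aubin--Lions--Simon proof, i.e.\ the Ehrling interpolation inequality combined with Simon's compactness criterion verified through boundedness of time-averages in $B_0$ and the H\"older-in-time translation estimate in $B_1$, which is essentially the argument behind the cited reference.
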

	\subsection{Two Scale Convergence}
	\begin{defin}(cf. \cite{allaire1992homogenization, lukkassen2002two})
		Let $\{u_\epsilon\}_{\epsilon>0}$ be a sequence of functions in $L^2(S\times\Omega) (\Omega\subset\mathbb{R}^n$ open) where $\epsilon$ being a sequence of strictly positive numbers that tends to zero. $\{u_\epsilon\}$ is said to be two-scale converges to a unique function $u(t,x,y)$ if and only if for any $\psi\in C^\infty_0(S\times\Omega; C^\infty_{per}(Y))$ we have
		\begin{align}\label{eqn:2S}
			\lim_{\epsilon\rightarrow 0}\int_{0}^{T}\int_{\Omega}u_\epsilon(t,x)\psi(t,x,\frac{x}{\epsilon})dxdt=\int_{0}^{T}\int_{\Omega}\int_{Y} u(t,x,y)\psi(t,x,y)dxdydt.
		\end{align}
	\end{defin}
	We denote \eqref{eqn:2S} by $u_\epsilon \overset{2}{\rightharpoonup} u$. Now, we quote the following theorems whose proofs can be found in \cite{allaire1992homogenization, neuss1992homogenization, lukkassen2002two}.
	\begin{lemma}\label{thm:TS1}
		Let $u_\epsilon$ strongly convergent to $u\in L^2(S\times\Omega)$ then $u_\epsilon \overset{2}{\rightharpoonup} u_0$ where $u_0(t,x,y)=u(t,x)$.
	\end{lemma}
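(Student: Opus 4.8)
The plan is to verify the defining condition \eqref{eqn:2S} of two-scale convergence directly, exploiting that the candidate limit $u_0(t,x,y)=u(t,x)$ is independent of the fast variable $y$. I would fix an arbitrary test function $\psi\in C_0^\infty(S\times\Omega; C_{per}^\infty(Y))$ and set $\bar\psi(t,x):=\int_Y\psi(t,x,y)\,dy$. Since $u_0$ does not depend on $y$, the right-hand side of \eqref{eqn:2S} collapses to $\int_0^T\int_\Omega u(t,x)\bar\psi(t,x)\,dx\,dt$. Thus it suffices to show that $\int_0^T\int_\Omega u_\epsilon(t,x)\psi(t,x,\tfrac{x}{\epsilon})\,dx\,dt$ converges to $\int_0^T\int_\Omega u\,\bar\psi\,dx\,dt$ as $\epsilon\to0$.

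First I would split the difference as
\begin{align*}
	\int_0^T\!\!\int_\Omega u_\epsilon\,\psi(t,x,\tfrac{x}{\epsilon})\,dx\,dt-\int_0^T\!\!\int_\Omega u\,\bar\psi\,dx\,dt
	&=\underbrace{\int_0^T\!\!\int_\Omega (u_\epsilon-u)\,\psi(t,x,\tfrac{x}{\epsilon})\,dx\,dt}_{=:I_1}\\
	&\quad+\underbrace{\int_0^T\!\!\int_\Omega u\,\bigl[\psi(t,x,\tfrac{x}{\epsilon})-\bar\psi(t,x)\bigr]\,dx\,dt}_{=:I_2}.
\end{align*}
For $I_1$ I would use the Cauchy--Schwarz inequality together with the fact that $\psi$ is smooth, $Y$-periodic and compactly supported in $S\times\Omega$, so that $\|\psi(t,x,\tfrac{x}{\epsilon})\|_{L^2(S\times\Omega)}\le C\|\psi\|_{L^\infty}$ uniformly in $\epsilon$; the hypothesis $u_\epsilon\to u$ in $L^2(S\times\Omega)$ then forces $|I_1|\le C\,\|u_\epsilon-u\|_{L^2(S\times\Omega)}\to0$.

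The main work lies in $I_2$. Here I would invoke the classical mean-value property for oscillating periodic functions: for admissible $\psi$ the sequence $\psi(t,x,\tfrac{x}{\epsilon})$ converges weakly in $L^2(S\times\Omega)$ to its cell average $\bar\psi(t,x)=\int_Y\psi(t,x,y)\,dy$ (recall $|Y|=1$), which is the standard fact underlying the consistency of two-scale convergence, cf. \cite{allaire1992homogenization, lukkassen2002two}. Since $u\in L^2(S\times\Omega)$ is a fixed element and $\psi(t,x,\tfrac{x}{\epsilon})-\bar\psi(t,x)\rightharpoonup 0$ weakly in $L^2(S\times\Omega)$, the pairing $I_2$ tends to zero by the very definition of weak convergence. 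Combining $I_1\to0$ and $I_2\to0$ gives \eqref{eqn:2S}, and as $\psi$ was arbitrary the two-scale convergence $u_\epsilon\overset{2}{\rightharpoonup}u_0$ follows.

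The only genuinely nontrivial ingredient is the weak $L^2$-convergence of the oscillating test function to its $y$-average used in $I_2$; everything else is Cauchy--Schwarz and the strong-convergence hypothesis. I expect this mean-value lemma to be the single step one should either cite from the two-scale literature or, for a self-contained argument, establish by approximating $\psi$ through finite sums $\sum_k a_k(t,x)g_k(y)$ and applying the Riemann--Lebesgue-type fact that $g_k(\tfrac{x}{\epsilon})\rightharpoonup\int_Y g_k\,dy$ weakly in $L^2$ for continuous $Y$-periodic $g_k$.
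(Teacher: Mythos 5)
Your proof is correct and is precisely the classical argument for this fact; the paper itself supplies no proof but defers to \cite{allaire1992homogenization, lukkassen2002two}, where exactly this decomposition (Cauchy--Schwarz on the strongly convergent part plus the weak $L^2$-convergence of the oscillating test function to its cell average) is used. Nothing further is needed.
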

	\begin{lemma}\label{thm:TS2}
		(i) For each bounded sequence $\{u_\epsilon\}\in L^2(S\times\Omega)$ there exists a subsequence which two scale converges to $u(t,x,y)\in L^2(S\times\Omega\times Y)$.\\
		(ii) Let $\{u_\epsilon\}$ be a bounded sequence in $L^2(S; H^{1,2}(\Omega))$, which converges weakly to a limit function $u(t,x)\in L^2(S; H^{1,2}(\Omega))$. Then there exists $u_1(t,x,y)\in L^2(S\times\Omega ; H^{1,2}_{per}(Y)/\mathbb{R})$ such that upto a subsequence  $u_\epsilon(t,x) \overset{2}{\rightharpoonup} u(t,x)$ and $\nabla u_\epsilon \overset{2}{\rightharpoonup} \nabla_x u+\nabla_y u_1$.\\
		(iii) Let $\{u_\epsilon\}$ and $\{\epsilon\nabla u_\epsilon\}$ be two bounded sequences in $L^2(S\times\Omega)$ and $L^2(S\times\Omega)^n$. Then there exists a function $u\in L^2(S\times\Omega; H^{1,2}_{per}(Y))$ such that $u_\epsilon\overset{2}{\rightharpoonup} u$ and $\epsilon\nabla_x u_\epsilon \overset{2}{\rightharpoonup}\nabla_y u$, respectively.
	\end{lemma}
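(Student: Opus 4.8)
The plan is to treat the three statements in order, since (ii) and (iii) both build on the basic compactness established in (i). For (i), I would introduce the family of linear functionals $L_\epsilon(\psi):=\int_0^T\int_\Omega u_\epsilon(t,x)\,\psi(t,x,\tfrac{x}{\epsilon})\,dx\,dt$ acting on $\psi\in C^\infty_0(S\times\Omega; C^\infty_{per}(Y))$. The heart of the matter is the oscillation estimate: for such admissible $\psi$ one has $\|\psi(\cdot,\cdot,\tfrac{\cdot}{\epsilon})\|_{L^2(S\times\Omega)}\to\|\psi\|_{L^2(S\times\Omega\times Y)}$ as $\epsilon\to0$, which is proved by covering $\Omega$ with the $\epsilon$-cells and applying the Riemann--Lebesgue mean-value property of $Y$-periodic functions. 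Combining this with the Cauchy--Schwarz inequality and the uniform bound $\|u_\epsilon\|_{L^2(S\times\Omega)}\le C$ gives $\limsup_\epsilon|L_\epsilon(\psi)|\le C\|\psi\|_{L^2(S\times\Omega\times Y)}$. A diagonal and density argument over a countable dense set of test functions then extracts a subsequence along which $L_\epsilon(\psi)$ converges for every $\psi$; the limit functional is bounded on a dense subspace of $L^2(S\times\Omega\times Y)$ and therefore, by Riesz representation, is realized by a unique $u\in L^2(S\times\Omega\times Y)$, which is exactly the two-scale limit.

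For (ii), I would first apply (i) to $\{u_\epsilon\}$ and to the bounded family $\{\nabla u_\epsilon\}$, extracting two-scale limits $u_0\in L^2(S\times\Omega\times Y)$ and $\xi\in L^2(S\times\Omega\times Y)^n$. To identify $u_0$ with the weak limit $u$, I test the integration-by-parts identity $\int\nabla u_\epsilon\cdot\psi(\cdot,\tfrac{\cdot}{\epsilon}) = -\int u_\epsilon[(\nabla_x\cdot\psi)+\tfrac{1}{\epsilon}(\nabla_y\cdot\psi)](\cdot,\tfrac{\cdot}{\epsilon})$ with $\psi\in C^\infty_0(S\times\Omega; C^\infty_{per}(Y))^n$, multiply by $\epsilon$, and let $\epsilon\to0$; the boundedness of $\nabla u_\epsilon$ forces $\int\int\int u_0\,\nabla_y\cdot\psi\,dx\,dy\,dt=0$, so $u_0$ is independent of $y$ and equals $u$. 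Next, restricting to test functions with $\nabla_y\cdot\psi=0$ and passing to the limit in the same identity without the $\epsilon$ factor yields $\int\int\int(\xi-\nabla_x u)\cdot\psi=0$ for all such $\psi$. By the standard characterization of the orthogonal complement of divergence-free periodic fields, $\xi-\nabla_x u$ lies in the closure of $y$-gradients, i.e.\ there exists $u_1\in L^2(S\times\Omega; H^{1,2}_{per}(Y)/\mathbb{R})$ with $\xi=\nabla_x u+\nabla_y u_1$, giving $\nabla u_\epsilon\overset{2}{\rightharpoonup}\nabla_x u+\nabla_y u_1$.

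For (iii), I again invoke (i) to obtain two-scale limits $u$ of $\{u_\epsilon\}$ and $\chi$ of $\{\epsilon\nabla u_\epsilon\}$. Integrating by parts in $\int\epsilon\nabla u_\epsilon\cdot\psi(\cdot,\tfrac{\cdot}{\epsilon})=-\int u_\epsilon[\epsilon(\nabla_x\cdot\psi)+(\nabla_y\cdot\psi)](\cdot,\tfrac{\cdot}{\epsilon})$ and passing to the two-scale limit gives $\int\int\int\chi\cdot\psi=-\int\int\int u\,\nabla_y\cdot\psi$ for all admissible $\psi$; this says $\chi=\nabla_y u$ in the distributional sense in $y$, so $u(t,x,\cdot)\in H^{1,2}_{per}(Y)$ for a.e.\ $(t,x)$ and $\epsilon\nabla u_\epsilon\overset{2}{\rightharpoonup}\nabla_y u$. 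The main obstacle throughout is the gradient-structure part of (ii): establishing that the two-scale limit of the gradient splits as $\nabla_x u+\nabla_y u_1$ requires both the $\epsilon$-scaled test to kill the $y$-dependence of $u_0$ and the orthogonal-decomposition argument identifying the remainder as a $y$-gradient. The oscillation estimate underpinning (i) is the other technical pillar, but it is comparatively routine.
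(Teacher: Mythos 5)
The paper does not prove this lemma at all: it is quoted as a known compactness result, with the reader referred to \cite{allaire1992homogenization, neuss1992homogenization, lukkassen2002two} for the proofs. Your sketch is, in substance, the classical Nguetseng--Allaire argument from exactly those references, and it is correct: the oscillation (mean-value) estimate plus Cauchy--Schwarz gives equiboundedness of the functionals $L_\epsilon$, a diagonal extraction over a countable dense family plus Riesz representation yields (i); the $\epsilon$-scaled duality identity kills the $y$-dependence of the two-scale limit of $u_\epsilon$ and identifies it with the weak limit $u$, and testing against $y$-divergence-free fields together with the periodic Helmholtz decomposition produces the corrector $u_1$ with $\nabla u_\epsilon \overset{2}{\rightharpoonup} \nabla_x u + \nabla_y u_1$ in (ii); the unscaled identity for $\epsilon\nabla u_\epsilon$ gives (iii). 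The only point you pass over silently is the standard admissibility issue in (i): to upgrade convergence of $L_\epsilon(\psi)$ from the countable dense set to all test functions one needs a uniform-in-$\epsilon$ control of $\|\psi(\cdot,\cdot,\cdot/\epsilon)-\psi'(\cdot,\cdot,\cdot/\epsilon)\|_{L^2(S\times\Omega)}$ by, say, the $L^2(S\times\Omega;C_{per}(Y))$ norm, which is why the density argument is run in that space rather than in $L^2(S\times\Omega\times Y)$ directly; this is routine and does not affect the validity of your outline. Since the time variable enters only as a parameter, the extension of the classical spatial statements to $L^2(S\times\Omega)$ is immediate, as you implicitly use.
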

	\begin{defin}(Two-scale convergence for $\epsilon$-periodic hypersurfaces \cite{allairetwo}) A sequence of functions $\{u_\epsilon\}$ in $L^2(S\times\Gamma^*_\epsilon)$ is said to be two-scale converges to a limit $u\in L^2(S\times\Omega\times\Gamma)$ if and only if for any $\psi\in C(S\times\bar{\Omega} ; C_{per}(Y))$ we have
		\begin{align}\label{eqn:2SS}
			\lim_{\epsilon\rightarrow 0}\epsilon\int_{0}^{T}\int_{\Gamma^*_\epsilon} u_\epsilon(t,x)\psi(t,x,\frac{x}{\epsilon})d\sigma_xdt=  \int_{0}^{T}\int_{\Omega}\int_{\Gamma} u(t,x,y)\psi(t,x,y)dxd\sigma_ydt.
		\end{align}
	\end{defin}
	\begin{lemma}\label{thm:TS3}
		Let $\{u_\epsilon\}$ be a sequence in $L^2(S\times\Gamma^*_\epsilon)$ such that $ \epsilon\int_{0}^{T}\int_{\Gamma^*_\epsilon}|u_\epsilon(t,x)|^2d\sigma_xdt\le C,$
		% \begin{align*}
			%   \epsilon\int_{0}^{T}\int_{\Gamma^*_\epsilon}|u_\epsilon(t,x)|^2d\sigma_xdt\le C,
			% \end{align*}
		where C is a constant independent of $\epsilon$. Then there exists a subsequence $u_\epsilon$(still denoted by same symbol) and a two scale limit $u\in L^2(S\times\Omega\times\Gamma)$ such that
		\begin{align*}
			\lim_{\epsilon\rightarrow 0} \epsilon\int_{0}^{T}\int_{\Gamma^*_\epsilon} u_\epsilon(t,x)\psi(t,x,\frac{x}{\epsilon})d\sigma_xdt=\int_{0}^{T}\int_{\Omega}\int_{\Gamma} u(t,x,y)\psi(t,x,y)dxd\sigma_ydt,
		\end{align*}
		for any $\psi(t,x,y)\in C(S\times\bar{\Omega} ; C_{per}(Y))$.
	\end{lemma}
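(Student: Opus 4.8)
The plan is to prove the result by a functional-analytic duality argument, treating the weighted surface integrals as a sequence of bounded linear functionals on the test space and extracting a limit that is represented by the desired two-scale limit $u$. Concretely, for each $\epsilon$ I would introduce the linear functional
\begin{align*}
	L_\epsilon(\psi):=\epsilon\int_0^T\int_{\Gamma^*_\epsilon}u_\epsilon(t,x)\,\psi\!\left(t,x,\tfrac{x}{\epsilon}\right)\,d\sigma_x\,dt,\qquad \psi\in C(S\times\bar\Omega;C_{per}(Y)).
\end{align*}
The objective is to show that, along a subsequence, $L_\epsilon(\psi)$ converges for every admissible $\psi$ to an expression of the form $\int_0^T\int_\Omega\int_\Gamma u\,\psi\,dx\,d\sigma_y\,dt$ for some $u\in L^2(S\times\Omega\times\Gamma)$, which is exactly the asserted two-scale convergence.

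First I would derive a uniform bound on $L_\epsilon$. By the Cauchy--Schwarz inequality on the $\epsilon$-weighted surface measure,
\begin{align*}
	|L_\epsilon(\psi)|\le\left(\epsilon\int_0^T\int_{\Gamma^*_\epsilon}|u_\epsilon|^2\,d\sigma_x\,dt\right)^{1/2}\left(\epsilon\int_0^T\int_{\Gamma^*_\epsilon}\left|\psi\!\left(t,x,\tfrac{x}{\epsilon}\right)\right|^2\,d\sigma_x\,dt\right)^{1/2}.
\end{align*}
The first factor is at most $\sqrt C$ by hypothesis. For the second factor I would invoke the surface oscillation lemma
\begin{align*}
	\lim_{\epsilon\to0}\epsilon\int_0^T\int_{\Gamma^*_\epsilon}\left|\psi\!\left(t,x,\tfrac{x}{\epsilon}\right)\right|^2\,d\sigma_x\,dt=\int_0^T\int_\Omega\int_\Gamma|\psi(t,x,y)|^2\,dx\,d\sigma_y\,dt,
\end{align*}
the $L^2$-analogue of the mean-value convergence for periodic hypersurfaces, which rests on the periodicity of $\Gamma^*_\epsilon$ together with the scaling relation $\lim_{\epsilon\to0}\epsilon|\Gamma^*_\epsilon|=|\Gamma|\,|\Omega|/|Y|$ recorded at the start of Section~3. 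In particular the second factor stays bounded in $\epsilon$ for fixed $\psi$, and taking the limit yields $\limsup_{\epsilon\to0}|L_\epsilon(\psi)|\le\sqrt C\,\|\psi\|_{L^2(S\times\Omega\times\Gamma)}$.

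Next I would extract the limit functional. Since $L^2(S\times\Omega\times\Gamma)$ is separable and $C(S\times\bar\Omega;C_{per}(Y))$ is dense in it (continuous $Y$-periodic functions restrict densely onto $L^2(\Gamma)$), I fix a countable dense family of test functions and use a diagonal argument to pass to a subsequence along which $L_\epsilon(\psi)$ converges for every member of that family. The uniform bound of the previous step shows the resulting limit $L$ is Lipschitz with respect to the $L^2(S\times\Omega\times\Gamma)$-norm, with $\|L\|\le\sqrt C$, so it extends continuously to all of $L^2(S\times\Omega\times\Gamma)$. The Riesz representation theorem then furnishes a unique $u\in L^2(S\times\Omega\times\Gamma)$ with $L(\psi)=\int_0^T\int_\Omega\int_\Gamma u\,\psi\,dx\,d\sigma_y\,dt$, and unwinding the definition of $L$ gives the claimed identity, i.e. $u_\epsilon\overset{2}{\rightharpoonup}u$.

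The main obstacle is the surface oscillation lemma, which carries all the geometric content: one must show the $\epsilon$-weight exactly compensates the blow-up $|\Gamma^*_\epsilon|\sim\epsilon^{-1}$ of the interfacial area, uniformly over the smooth (hence uniformly continuous) integrand. I would prove it by covering $\Omega$ with the translated cells $\epsilon Y_k$, freezing $\psi(t,x,x/\epsilon)$ to its value at a cell centre on each cell (controlling the error by uniform continuity of $\psi$), and recognising the sum over cells as a Riemann sum converging to the triple integral. A cleaner alternative, once the boundary unfolding operator $\mathcal{T}^b_\epsilon$ of the main text is available, is to note that $\mathcal{T}^b_\epsilon$ is an $\epsilon$-weighted isometry into $L^2(\Omega\times\Gamma)$, so the hypothesis renders $\{\mathcal{T}^b_\epsilon u_\epsilon\}$ bounded in $L^2(S\times\Omega\times\Gamma)$; a weak limit of this sequence is then identified with the two-scale limit, which bypasses the explicit Riesz step.
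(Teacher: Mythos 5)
The paper does not prove this lemma at all: it is quoted as a known result for two-scale convergence on periodic surfaces (cited to the Allaire--Damlamian--Hornung / Neuss-Radu literature), so there is no in-paper proof to compare against. Your argument is the standard proof of that cited result and is essentially correct: Cauchy--Schwarz on the $\epsilon$-weighted surface measure, the oscillation lemma $\epsilon\int_{\Gamma^*_\epsilon}|\psi(t,x,\tfrac{x}{\epsilon})|^2\,d\sigma_x\to\int_\Omega\int_\Gamma|\psi|^2\,dx\,d\sigma_y$ for continuous admissible test functions, a diagonal extraction, and Riesz representation. One point to tighten: the bound $\limsup_\epsilon|L_\epsilon(\psi)|\le\sqrt{C}\,\|\psi\|_{L^2(S\times\Omega\times\Gamma)}$ is only asymptotic in $\epsilon$, so to upgrade convergence on the countable family to convergence for \emph{every} $\psi\in C(S\times\bar\Omega;C_{per}(Y))$ you need the family to be dense in the \emph{uniform} norm (where $|L_\epsilon(\psi-\psi_k)|\le\sqrt{C}\,(\epsilon|\Gamma^*_\epsilon|T)^{1/2}\|\psi-\psi_k\|_\infty$ is controlled uniformly in $\epsilon$), not merely dense in $L^2(S\times\Omega\times\Gamma)$ as written; after that the Riesz step goes through using the asymptotic $L^2$ bound on the limit functional. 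Your closing alternative via the boundary unfolding operator is also sound and in fact matches the paper's own toolkit: Lemma \ref{thm:PU2} makes $\mathcal{T}^b_\epsilon$ an $\epsilon$-weighted isometry, so the hypothesis bounds $\{\mathcal{T}^b_\epsilon u_\epsilon\}$ in $L^2(S\times\Omega\times\Gamma)$, and Lemma \ref{thm:PU1} identifies a weak limit of the unfolded sequence with the two-scale limit, which is arguably the cleaner route given what is already assembled in the appendix.
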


	% \begin{proof}
		%  The proof follows directly from Theorem 2.1 of \cite{meirmanov2011compactness}.
		% \end{proof}
	% \subsection{The time dependent boundary unfolding operator}
	% We need to use the notion of periodic unfolding to pass the two-scale limit in the reaction rate term and to characterize the limit of the multi-valued dissolution rate term. It is important to highlight that there is an intimate link between the two-scale convergence and weak convergence of the unfolded sequence, see \cite{cioranescu2002periodic,cioranescu2008periodic}. More information regarding periodic unfolding can be found in \cite{cioranescu2012periodic,donato2016periodic}.
	\subsection{Periodic Unfolding Operator}
	\begin{defin}\label{dfn:U}(Periodic Unfolding Operator) (cf. \cite{cioranescu2002periodic, cioranescu2008periodic}) Let $u_\epsilon\in L^r(S\times\Omega)$, where $1\le r\le\infty$. We define the boundary unfolding operator $\mathcal{T}_\varepsilon : L^r(S\times\Omega)\rightarrow L^r(S\times\Omega\times Y)$ such that
		\begin{align}\label{eqn:U}
			\mathcal{T}_\varepsilon(\phi)(t,x,y)=	
			\begin{cases}
				\phi(t,\varepsilon\left[\frac{x}{\varepsilon}\right]_Y+\varepsilon y)\text{ for all } (t,x,y)\in S\times\Omega\times Y\\
				0 \text{ otherwise, }
			\end{cases},
		\end{align}
		where $\left[\frac{x}{\varepsilon}\right]_Y$ denotes the unique integer combination $\sum_{j=1}^{n}k_je_j$ of the periods such that $x-\left[\frac{x}{\varepsilon}\right]_Y$ belongs to $Y$.
	\end{defin}
	\begin{lemma}\label{lemma:uo}
		Let $1<p<\infty$. Then the operator $\mathcal{T}_\varepsilon$, defined in the Definition \ref{dfn:U} satisfy the following properties:
		\begin{itemize}
			\item[$(i)$] Let $u_\epsilon, v_\epsilon\in L^p(S\times\Omega)$, then $\mathcal{T}_\varepsilon(u_\epsilon v_\epsilon)=\mathcal{T}_\varepsilon(u_\epsilon)\mathcal{T}_\varepsilon(v_\epsilon)$.
			\item[$(ii)$] Let $u_\epsilon\in L^p(S\times\Omega)$, then $\|\mathcal{T}_\varepsilon(u_\epsilon)\|_{L^2(S\times\Omega\times Y)}=\|u_\epsilon\|_{L^2(S\times\Omega)}$.
			\item[$(iii)$] Let $\{u_\epsilon\}$  be a bounded sequence in $L^p(S\times\Omega)$. Then the following statements hold:\\
			$(a)$ If $u_\epsilon\rightarrow u$ in $L^p(S\times\Omega)$, then $ \mathcal{T}_\varepsilon(u_\epsilon)\rightarrow u$ in  $L^p(S\times\Omega\times Y)$.\\
			$(b)$ If $u_\epsilon \overset{2}{\rightharpoonup} u$ in $L^p(S\times\Omega\times Y)$, then $\mathcal{T}_\varepsilon(u_\epsilon) \rightharpoonup u$ in $L^p(S\times\Omega\times Y)$.
		\end{itemize}
	\end{lemma}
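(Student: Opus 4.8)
The plan is to dispatch the four assertions in order of increasing difficulty, relying only on the pointwise definition \eqref{eqn:U} and a single change-of-variables computation on the periodicity cells. Property $(i)$ is immediate: since $\mathcal{T}_\varepsilon$ acts by composition with the fixed map $(t,x,y)\mapsto(t,\varepsilon[x/\varepsilon]_Y+\varepsilon y)$, for almost every $(t,x,y)$ one has $\mathcal{T}_\varepsilon(u_\varepsilon v_\varepsilon)(t,x,y)=(u_\varepsilon v_\varepsilon)(t,\varepsilon[x/\varepsilon]_Y+\varepsilon y)=\mathcal{T}_\varepsilon(u_\varepsilon)(t,x,y)\,\mathcal{T}_\varepsilon(v_\varepsilon)(t,x,y)$, so that the multiplicativity holds verbatim with no limiting argument involved.

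For $(ii)$ I would decompose $\Omega$, up to the boundary layer of cells where $\mathcal{T}_\varepsilon$ is set to zero, into the disjoint cells $\varepsilon(Y+k)$, $k\in\mathbb{Z}^n$. On a fixed cell the unfolded function $y\mapsto\mathcal{T}_\varepsilon(u_\varepsilon)(t,x,y)=u_\varepsilon(t,\varepsilon k+\varepsilon y)$ is independent of $x$, so integrating first in $x$ over the cell, of measure $\varepsilon^n|Y|=\varepsilon^n$, and then substituting $z=\varepsilon k+\varepsilon y$ with $dz=\varepsilon^n\,dy$ returns exactly $\int_{\varepsilon(Y+k)}|u_\varepsilon(t,z)|^2\,dz$. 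Summing over the cells covering the interior and integrating in $t$ yields the isometry $\|\mathcal{T}_\varepsilon(u_\varepsilon)\|_{L^2(S\times\Omega\times Y)}=\|u_\varepsilon\|_{L^2(S\times\Omega)}$; the identical computation with exponent $p$ produces the corresponding $L^p$ bound, which I will invoke below.

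For $(iii)(a)$ I would split $\|\mathcal{T}_\varepsilon(u_\varepsilon)-u\|_{L^p(S\times\Omega\times Y)}\le\|\mathcal{T}_\varepsilon(u_\varepsilon-u)\|_{L^p}+\|\mathcal{T}_\varepsilon(u)-u\|_{L^p}$. The first term equals $\|u_\varepsilon-u\|_{L^p(S\times\Omega)}\to0$ by the $L^p$ version of $(ii)$. For the second I would first prove $\mathcal{T}_\varepsilon(\varphi)\to\varphi$ for $\varphi\in C_0^\infty(S\times\Omega)$: since the unfolded argument $\varepsilon[x/\varepsilon]_Y+\varepsilon y$ lies within distance $O(\varepsilon)$ of $x$, uniform continuity of $\varphi$ gives uniform, hence $L^p$, convergence, and the general case follows by density of smooth functions together with the uniform boundedness of $\mathcal{T}_\varepsilon$ furnished by $(ii)$.

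Finally, $(iii)(b)$ is the genuine unfolding criterion and is the step I expect to be the main obstacle. I would fix $\psi\in C_0^\infty(S\times\Omega;C^\infty_{per}(Y))$, set $\psi^\varepsilon(t,x):=\psi(t,x,x/\varepsilon)$, and use $(i)$ and $(ii)$ to obtain the exact adjoint identity $\int_{S\times\Omega\times Y}\mathcal{T}_\varepsilon(u_\varepsilon)\,\mathcal{T}_\varepsilon(\psi^\varepsilon)\,dt\,dx\,dy=\int_{S\times\Omega}u_\varepsilon\,\psi^\varepsilon\,dt\,dx$. Writing $\int\mathcal{T}_\varepsilon(u_\varepsilon)\psi=\int\mathcal{T}_\varepsilon(u_\varepsilon)\bigl(\psi-\mathcal{T}_\varepsilon(\psi^\varepsilon)\bigr)+\int u_\varepsilon\psi^\varepsilon$, the argument of $(iii)(a)$ adapted to the smooth two-variable function $\psi$ gives $\mathcal{T}_\varepsilon(\psi^\varepsilon)\to\psi$ strongly in $L^q$, so the first integral vanishes in the limit (the unfolded sequence being $L^p$-bounded), while the second converges to $\int_{S\times\Omega\times Y}u\,\psi$ precisely by the definition \eqref{eqn:2S} of two-scale convergence. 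Hence $\int\mathcal{T}_\varepsilon(u_\varepsilon)\psi\to\int u\,\psi$ on the dense set of such $\psi$, and combining this with the uniform $L^p$-bound and the reflexivity of $L^p(S\times\Omega\times Y)$ for $1<p<\infty$ yields $\mathcal{T}_\varepsilon(u_\varepsilon)\rightharpoonup u$. The delicate points are justifying $\mathcal{T}_\varepsilon(\psi^\varepsilon)\to\psi$, where the inner $y$-dependence interacts with the unfolding, and controlling the boundary layer of incomplete cells, which contributes only $O(\varepsilon)$ in measure and is therefore harmless in the limit.
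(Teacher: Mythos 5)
Your proposal is correct, but note that the paper does not actually prove this lemma: its ``proof'' is a one-line citation to Cioranescu et al.\ (the periodic unfolding papers), so there is nothing in the manuscript to compare your argument against except those references. What you have written is essentially the standard self-contained proof from that literature: $(i)$ is indeed pointwise composition, $(ii)$ is the cell-by-cell change of variables $z=\varepsilon k+\varepsilon y$, $(iii)(a)$ is the exact-isometry-plus-density argument, and $(iii)(b)$ is the adjoint identity $\int\mathcal{T}_\varepsilon(u_\varepsilon)\mathcal{T}_\varepsilon(\psi^\varepsilon)=\int u_\varepsilon\psi^\varepsilon$ combined with $\mathcal{T}_\varepsilon(\psi^\varepsilon)\to\psi$ (which follows from $Y$-periodicity of $\psi$ in its third slot, since $\mathcal{T}_\varepsilon(\psi^\varepsilon)(t,x,y)=\psi(t,\varepsilon[x/\varepsilon]_Y+\varepsilon y,y)$), boundedness, and reflexivity of $L^p$. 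The one point you should make explicit rather than wave at: the exact equality in $(ii)$ and the exact adjoint identity in $(iii)(b)$ hold only on the union $\widehat\Omega_\varepsilon$ of cells entirely contained in $\Omega$; on the boundary layer $\Omega\setminus\widehat\Omega_\varepsilon$ (where $\mathcal{T}_\varepsilon$ is set to zero) one only gets $\|\mathcal{T}_\varepsilon(u_\varepsilon)\|_{L^p(S\times\Omega\times Y)}=\|u_\varepsilon\|_{L^p(S\times\widehat\Omega_\varepsilon)}\le\|u_\varepsilon\|_{L^p(S\times\Omega)}$, which is an equality as stated only under the paper's (implicit) assumption that $\Omega$ is an exact union of $\varepsilon$-cells. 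For $(iii)$ this defect is harmless, as you say, because $|\Omega\setminus\widehat\Omega_\varepsilon|\to 0$ and the integrands are controlled by H\"older; but for $(ii)$ as an identity it is genuinely needed, so state the hypothesis or replace the equality by the inequality plus the asymptotic identity. With that caveat your argument is complete and correct.
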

	\begin{proof}
		See in \cite{cioranescu2002periodic, cioranescu2008periodic, cioranescu2006periodic} for proof details.
	\end{proof}
	\subsection{Boundary Unfolding Operator}
	\begin{defin}
		(Time dependent boundary unfolding operator) For $\epsilon>0$, the boundary unfolding of a function $\phi\in L^2(S\times\Gamma^*_\epsilon)$ is defined by
		\begin{align}\label{eqn:BU}
			\mathcal{T}^b_\varepsilon(\phi)(t,x,y)=\phi(t,\varepsilon\left[\frac{x}{\varepsilon}\right]_Y+\varepsilon y)\text{ for all } (t,x,y)\in S\times\Omega\times\Gamma,
		\end{align}
		where $\left[\frac{x}{\varepsilon}\right]_Y$ denotes the unique integer combination $\sum_{j=1}^{n}k_je_j$ of the periods such that $x-\left[\frac{x}{\varepsilon}\right]_Y$ belongs to $Y$. The most notable point is that the oscillation due to the perforations are shifted into the second variable $y$ which belongs to a fixed domain $\Gamma$.
	\end{defin}
	\begin{lemma}[Lemma 4.6 in \cite{marciniak2008derivation}]\label{thm:PU1}
		If $u_\epsilon$ converges two-scale to $u$ and $\mathcal{T}^b_\epsilon u_\epsilon$ converges weakly to $u_0\in L^2(S\times\Omega ; L^2_{per}(\Gamma))$ then $u=u_0$ a.e. in $S\times\Omega\times\Gamma$.
	\end{lemma}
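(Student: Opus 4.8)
The plan is to transfer the identification from the oscillating hypersurfaces $\Gamma^*_\epsilon$ onto the fixed reference surface $\Omega\times\Gamma$ by means of the boundary unfolding operator $\mathcal{T}^b_\epsilon$, and then to test both candidate limits $u$ and $u_0$ against the same dense class of smooth functions. Fix an arbitrary $\psi\in C(S\times\bar\Omega ; C_{per}(Y))$. Because $Y=(0,1)^n$ has $|Y|=1$, the scaling factor $\epsilon$ appearing in the definition \eqref{eqn:2SS} of boundary two-scale convergence is precisely the one produced by the surface change of variables behind $\mathcal{T}^b_\epsilon$. First I would record the unfolding identity
\begin{align*}
	\int_{0}^{T}\!\int_{\Omega}\!\int_{\Gamma}\mathcal{T}^b_\epsilon(u_\epsilon)\,\mathcal{T}^b_\epsilon\!\big(\psi(\cdot,\cdot,\tfrac{\cdot}{\epsilon})\big)\,dx\,d\sigma_y\,dt
	=\epsilon\int_{0}^{T}\!\int_{\Gamma^*_\epsilon} u_\epsilon(t,x)\,\psi\big(t,x,\tfrac{x}{\epsilon}\big)\,d\sigma_x\,dt-\rho_\epsilon,
\end{align*}
which uses the product rule $\mathcal{T}^b_\epsilon(fg)=\mathcal{T}^b_\epsilon(f)\mathcal{T}^b_\epsilon(g)$ (the boundary analogue of Lemma \ref{lemma:uo}(i)); here $\rho_\epsilon$ collects the contribution of the incomplete cells $\epsilon Y_k$ that meet $\partial\Omega$.

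Next I would pass to the limit on the left-hand side. From definition \eqref{eqn:BU} and the $Y$-periodicity of $\psi$ in its last argument one computes $\mathcal{T}^b_\epsilon(\psi(\cdot,\cdot,\tfrac{\cdot}{\epsilon}))(t,x,y)=\psi(t,\epsilon[\tfrac{x}{\epsilon}]_Y+\epsilon y,y)$, and this converges to $\psi(t,x,y)$ uniformly on the compact set $S\times\bar\Omega\times\Gamma$ by uniform continuity; hence $\mathcal{T}^b_\epsilon(\psi(\cdot,\cdot,\tfrac{\cdot}{\epsilon}))\to\psi$ strongly in $L^2(S\times\Omega\times\Gamma)$. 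Combining this strong convergence with the hypothesis $\mathcal{T}^b_\epsilon(u_\epsilon)\rightharpoonup u_0$ in $L^2(S\times\Omega\times\Gamma)$ (weak times strong converges to the product of the limits), the left-hand integral tends to $\int_{0}^{T}\int_{\Omega}\int_{\Gamma}u_0\,\psi\,dx\,d\sigma_y\,dt$. The weak convergence of $\mathcal{T}^b_\epsilon(u_\epsilon)$ also supplies the uniform bound $\sup_\epsilon\|\mathcal{T}^b_\epsilon(u_\epsilon)\|_{L^2(S\times\Omega\times\Gamma)}<\infty$, which through the near-isometry of $\mathcal{T}^b_\epsilon$ yields $\sup_\epsilon\,\epsilon\|u_\epsilon\|^2_{L^2(S\times\Gamma^*_\epsilon)}<\infty$; together with $\sup_\epsilon\epsilon|\Gamma^*_\epsilon|<\infty$ and the vanishing relative measure of the boundary-layer cells, this forces $\rho_\epsilon\to0$.

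Finally, the right-hand side of the identity converges, directly by the definition \eqref{eqn:2SS} of the boundary two-scale convergence $u_\epsilon\overset{2}{\rightharpoonup}u$, to $\int_{0}^{T}\int_{\Omega}\int_{\Gamma}u\,\psi\,dx\,d\sigma_y\,dt$. Equating the two limits gives $\int_{0}^{T}\int_{\Omega}\int_{\Gamma}(u-u_0)\,\psi\,dx\,d\sigma_y\,dt=0$ for every $\psi\in C(S\times\bar\Omega ; C_{per}(Y))$, and since this class is dense in $L^2(S\times\Omega\times\Gamma)$ I conclude $u=u_0$ a.e. in $S\times\Omega\times\Gamma$. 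I expect the main obstacle to be the first step: establishing the exact unfolding change-of-variables identity on the perforated surface and, above all, showing that the boundary-layer remainder $\rho_\epsilon$ is negligible, since near $\partial\Omega$ the operator $\mathcal{T}^b_\epsilon$ only sees partial cells and the surface measure there must be estimated with care. Once the matching $\epsilon\,d\sigma_x\leftrightarrow d\sigma_y$ on the complete cells is in place, the remaining ingredients are the weak–strong product passage and a routine density argument.
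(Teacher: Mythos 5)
Your argument is correct, but note that the paper does not actually prove this lemma: it is stated in the appendix purely as a quotation of Lemma 4.6 of \cite{marciniak2008derivation}, so there is no in-paper proof to compare against. What you have written is essentially the standard proof one finds in that reference and in the periodic-unfolding literature: the exact change-of-variables identity $\frac{1}{|Y|}\int_{S\times\Omega\times\Gamma}\mathcal{T}^b_\epsilon(u_\epsilon)\,\mathcal{T}^b_\epsilon\bigl(\psi(\cdot,\cdot,\tfrac{\cdot}{\epsilon})\bigr)\,dx\,d\sigma_y\,dt=\epsilon\int_{S\times\Gamma^*_\epsilon}u_\epsilon\psi(t,x,\tfrac{x}{\epsilon})\,d\sigma_x\,dt$ (consistent with the isometry of Lemma \ref{thm:PU2}), the uniform convergence $\mathcal{T}^b_\epsilon(\psi(\cdot,\cdot,\tfrac{\cdot}{\epsilon}))\to\psi$, the weak--strong product passage on the unfolded side, the definition \eqref{eqn:2SS} on the folded side, and density of $C(S\times\bar\Omega;C_{per}(Y))$ in $L^2(S\times\Omega\times\Gamma)$. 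Two small remarks. First, in this paper's geometric setting the boundary-layer remainder $\rho_\epsilon$ you flag as the main obstacle is a non-issue: $\Gamma^*_\epsilon$ is by definition the union of only those whole interfaces $\epsilon\Gamma_k$ with $\epsilon Y_k\subset\Omega$, and $\bar Y^s\subset Y$ keeps each $\Gamma_k$ strictly interior to its cell, so the unfolding identity is exact on $\Gamma^*_\epsilon$ and $\rho_\epsilon$ can be taken to vanish identically (your estimate would still be needed for the general setting where $\Omega$ is not a union of $\epsilon$-cells). Second, the uniform $L^2$ bound on $\epsilon\|u_\epsilon\|^2_{L^2(S\times\Gamma^*_\epsilon)}$ is already implicit in the hypothesis of two-scale convergence on surfaces (Lemma \ref{thm:TS3} assumes it), so you do not need to extract it from the weak convergence of the unfolded sequence, though doing so is also legitimate. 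The proof is complete and sound.
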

	% \begin{proof}
		%  The proof can be found in Lemma 4.6 in \cite{marciniak2008derivation}.
		% \end{proof}
	\begin{lemma}[Lemma 17 of \cite{fatima2014sulfate}]\label{thm:PU2}
		If $\phi\in L^2(S\times\Gamma^*_\epsilon)$ then the following identity holds
		\begin{align*}
			\frac{1}{|Y|}\|\mathcal{T}^b_\epsilon\phi\|^2_{L^2(S\times\Omega\times\Gamma)}=\epsilon\|\phi\|^2_{L^2(S\times\Gamma^*_\epsilon)}.
		\end{align*}
	\end{lemma}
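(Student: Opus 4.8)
The plan is to exploit the cellwise structure of the boundary unfolding operator $\mathcal{T}^b_\epsilon$ from \eqref{eqn:BU} and reduce the claimed isometry to an elementary change of variables on each periodicity cell. First I would decompose the surface as $\Gamma^*_\epsilon = \bigcup_k \epsilon\Gamma_k$ and the matrix into the scaled cells $\epsilon Y_k$, and carry out the $L^2(S\times\Omega\times\Gamma)$ integration cell by cell. The crucial observation is that, directly from the definition \eqref{eqn:BU}, for $x\in \epsilon Y_k$ one has $\epsilon[x/\epsilon]_Y = \epsilon k$, so that $\mathcal{T}^b_\epsilon(\phi)(t,x,y) = \phi(t,\epsilon k + \epsilon y)$ is \emph{independent of $x$} inside the cell $\epsilon Y_k$. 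This is what turns the unfolded norm into a sum of products of a pure $x$-volume and a pure surface integral.

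Using this, I would compute the unfolded norm over a single cell. Since the integrand does not depend on $x$ over $\epsilon Y_k$, the $x$-integration merely produces the volume factor $\int_{\epsilon Y_k} dx = \epsilon^n |Y|$, giving
\begin{align*}
    \int_{\epsilon Y_k}\int_\Gamma |\mathcal{T}^b_\epsilon(\phi)(t,x,y)|^2\, d\sigma_y\, dx = \epsilon^n|Y|\int_\Gamma |\phi(t,\epsilon k+\epsilon y)|^2\, d\sigma_y.
\end{align*}
Next I would apply the change of variables $x' = \epsilon k + \epsilon y$, which maps the reference surface $\Gamma$ onto $\epsilon\Gamma_k$; because $\Gamma$ is an $(n-1)$-dimensional hypersurface, the surface measure transforms as $d\sigma_{x'} = \epsilon^{n-1}\, d\sigma_y$, so that $\int_\Gamma |\phi(t,\epsilon k+\epsilon y)|^2\, d\sigma_y = \epsilon^{-(n-1)}\int_{\epsilon\Gamma_k}|\phi(t,x')|^2\, d\sigma_{x'}$. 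Combining the two factors via $\epsilon^n|Y|\cdot\epsilon^{-(n-1)} = \epsilon|Y|$ and summing over all cells $k$ yields
\begin{align*}
    \int_\Omega\int_\Gamma |\mathcal{T}^b_\epsilon(\phi)(t,x,y)|^2\, d\sigma_y\, dx = \epsilon|Y|\int_{\Gamma^*_\epsilon}|\phi(t,x')|^2\, d\sigma_{x'}.
\end{align*}
Integrating in $t$ over $S$ and dividing by $|Y|$ then gives exactly $\tfrac{1}{|Y|}\|\mathcal{T}^b_\epsilon\phi\|^2_{L^2(S\times\Omega\times\Gamma)} = \epsilon\|\phi\|^2_{L^2(S\times\Gamma^*_\epsilon)}$, as required.

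The main obstacle I anticipate is simply the careful bookkeeping of the two distinct scaling exponents: the volume factor $\epsilon^n$ from integrating the $x$-independent integrand over a full cell, against the surface Jacobian $\epsilon^{n-1}$ arising because $\Gamma$ is $(n-1)$-dimensional; their ratio is precisely what leaves a single power of $\epsilon$ on the right-hand side, and it is easy to slip a power here. A secondary technical matter is the boundary layer of cells meeting $\partial\Omega$. Under the periodicity assumption of Section~2, $\Gamma^*_\epsilon$ is assembled only from the complete cells $\epsilon\Gamma_k\subset\Omega$, and it is over exactly these cells that the cellwise decomposition and summation above are performed; hence the identity holds without any boundary correction.
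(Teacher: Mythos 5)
Your proof is correct. The paper itself gives no argument for this lemma---it is simply imported as Lemma 17 of the cited reference---so there is nothing to compare against beyond the standard computation, which is exactly what you carry out: the key facts are that $\mathcal{T}^b_\epsilon\phi$ is constant in $x$ on each cell $\epsilon Y_k$ (contributing the volume factor $\epsilon^n|Y|$) and that the map $y\mapsto \epsilon k+\epsilon y$ carries $d\sigma_y$ to $\epsilon^{-(n-1)}d\sigma_{x'}$ on $\epsilon\Gamma_k$, so the two exponents combine to the single power of $\epsilon$ in the identity. Your remark on the boundary cells is also apt: the identity is exact precisely because, under the geometric assumptions of Section~2, $\Omega$ is covered by complete cells and $\Gamma^*_\epsilon$ is built only from the interfaces $\epsilon\Gamma_k$ of those cells, so no partial-cell correction arises.
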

	% \begin{proof}
		%  The detailed proof can be found in Lemma 17 of \cite{fatima2014sulfate}.
		% \end{proof}
	\begin{lemma}[Proposition 5.2 of \cite{cioranescu2006periodic}, \cite{cioranescu2012periodic}]\label{thm:PU3}
		(i) $\mathcal{T}^b_\epsilon$ is a linear operator.\\
		(ii) Let $\{\phi_\epsilon\}$ be a sequence in $L^2(S\times\Omega)$ such that $\phi_\epsilon\rightarrow\phi$ strongly in $L^2(S\times\Omega)$ then $\mathcal{T}^b_\epsilon(\phi_\epsilon)\rightarrow \phi$ strongly in $L^2(S\times\Omega\times\Gamma)$. 
	\end{lemma}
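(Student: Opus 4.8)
The plan is to mirror the argument already used for the interior unfolding operator in Lemma~\ref{lemma:uo}, adapted to the oscillating interfaces $\Gamma^*_\epsilon$. Statement (i) is immediate: the formula \eqref{eqn:BU} evaluates its argument at the single point $\left(t,\epsilon\left[\frac{x}{\epsilon}\right]_Y+\epsilon y\right)$, which does not depend on the function being unfolded, so for $\phi,\psi\in L^2(S\times\Omega)$ and $a,b\in\RR$,
\[
\mathcal{T}^b_\epsilon(a\phi+b\psi)(t,x,y)=(a\phi+b\psi)\!\left(t,\epsilon\left[\tfrac{x}{\epsilon}\right]_Y+\epsilon y\right)=a\,\mathcal{T}^b_\epsilon(\phi)+b\,\mathcal{T}^b_\epsilon(\psi),
\]
giving linearity. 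For (ii), viewing the limit $\phi$ as a function independent of $y$ on $S\times\Omega\times\Gamma$, I would split
\[
\|\mathcal{T}^b_\epsilon(\phi_\epsilon)-\phi\|_{L^2(S\times\Omega\times\Gamma)}\le \|\mathcal{T}^b_\epsilon(\phi_\epsilon-\phi)\|_{L^2(S\times\Omega\times\Gamma)}+\|\mathcal{T}^b_\epsilon(\phi)-\phi\|_{L^2(S\times\Omega\times\Gamma)}=:J_1+J_2,
\]
where linearity from (i) has been used in the first term.

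The term $J_2$ is treated for a fixed limit function exactly as in the interior case. First take $\phi\in C(S\times\bar{\Omega})$. Because $y\in\Gamma\subset\bar{Y}$ and $\frac{x}{\epsilon}-\left[\frac{x}{\epsilon}\right]_Y\in Y$, the unfolded argument satisfies $\left|\epsilon\left[\frac{x}{\epsilon}\right]_Y+\epsilon y-x\right|\le c_Y\,\epsilon$ with $c_Y$ depending only on $Y$, so the argument converges to $(t,x)$ uniformly on $S\times\bar{\Omega}\times\Gamma$. Uniform continuity of $\phi$ then gives $\mathcal{T}^b_\epsilon(\phi)\to\phi$ uniformly, hence strongly in $L^2(S\times\Omega\times\Gamma)$ since the measure is finite. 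For general $\phi\in L^2(S\times\Omega)$ I would approximate by $\phi^\eta\in C(S\times\bar{\Omega})$ with $\phi^\eta\to\phi$ in $L^2(S\times\Omega)$, and close the density argument using the norm identity of Lemma~\ref{thm:PU2}, which controls $\|\mathcal{T}^b_\epsilon(\phi-\phi^\eta)\|^2_{L^2(S\times\Omega\times\Gamma)}=|Y|\,\epsilon\,\|\phi-\phi^\eta\|^2_{L^2(S\times\Gamma^*_\epsilon)}$ uniformly in $\epsilon$ via a scaled trace estimate.

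The term $J_1$ is the main obstacle. By Lemma~\ref{thm:PU2},
\[
J_1^2=|Y|\,\epsilon\,\|\phi_\epsilon-\phi\|^2_{L^2(S\times\Gamma^*_\epsilon)},
\]
so, in contrast with the interior operator---where the norm is preserved in the \emph{same} bulk space (Lemma~\ref{lemma:uo}(ii)) and strong convergence transfers for free---I must show that the \emph{bulk} convergence $\phi_\epsilon\to\phi$ in $L^2(S\times\Omega)$ forces the $\epsilon$-weighted surface norm on the $\tfrac{1}{\epsilon}$-dense interfaces $\Gamma^*_\epsilon$ to vanish. The device for this is the scaled trace inequality on $\Gamma^*_\epsilon$ of the form $\epsilon\,\|\zeta\|^2_{L^2(\Gamma^*_\epsilon)}\le C\big(\|\zeta\|^2_{L^2(\Omega)}+\epsilon^2\|\nabla\zeta\|^2_{L^2(\Omega)}\big)$ (the same estimate invoked in the proof of Lemma~\ref{lemma:APB}), applied to $\zeta=\phi_\epsilon-\phi$; the gradient term is rendered harmless by the explicit factor $\epsilon^2$. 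The delicate point is that only $L^2$-convergence, not $H^{1,2}$-convergence, is assumed for $(\phi_\epsilon)$, so I would carry the trace estimate through a continuous approximation of $\phi_\epsilon-\phi$ whose approximation error is controlled uniformly in $\epsilon$ by the very same inequality, thereby reducing $J_1\to0$ to the elementary convergence already established for continuous functions. Once both $J_1\to0$ and $J_2\to0$, the triangle inequality yields $\mathcal{T}^b_\epsilon(\phi_\epsilon)\to\phi$ strongly in $L^2(S\times\Omega\times\Gamma)$, completing (ii).
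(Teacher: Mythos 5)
Part (i) and the overall architecture of your argument for (ii) (split into $J_1+J_2$, handle continuous functions by uniform continuity of the unfolded argument, then a density argument) are the standard route, and for what it is worth the paper itself offers no proof of this lemma at all --- it is quoted verbatim from Cioranescu et al. But your proposal has a genuine gap precisely at the point you flag as delicate, and the fix you sketch does not close it. Both the reduction of $J_1$ and the density step inside $J_2$ require a bound of the form $\epsilon\|\zeta\|^2_{L^2(S\times\Gamma^*_\epsilon)}\le C\big(\|\zeta\|^2_{L^2(S\times\Omega)}+\epsilon^2\|\nabla\zeta\|^2_{L^2(S\times\Omega)}\big)$ applied to $\zeta=\phi_\epsilon-\phi$ (resp.\ $\zeta=\phi-\phi^\eta$), and for a function that is merely in $L^2(S\times\Omega)$ the left-hand side is not controlled by the first term alone --- indeed it is not even defined, since $\Gamma^*_\epsilon$ is a Lebesgue-null subset of $\Omega$ and an $L^2$ function has no trace there. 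Your proposed remedy is circular: after replacing $\phi_\epsilon-\phi$ by a continuous approximation, the approximation error is again only an $L^2$ function with no gradient bound, so ``the very same inequality'' cannot control its $\epsilon$-weighted surface norm, and no choice of mollification parameter rescues this because the offending term carries no factor of $\epsilon$ in front of the $L^2$ part that could be traded against the blow-up of the gradient of the mollification.

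The statement is true, and is what the cited references actually prove, under a stronger hypothesis: either $\phi_\epsilon\to\phi$ strongly in $H^{1,2}(\Omega)$ (uniformly in $t$, or in $L^2(S;H^{1,2}(\Omega))$), or $\phi_\epsilon$ bounded in $L^2(S;H^{1,2}(\Omega))$ and convergent in $L^2(S\times\Omega)$. Under such a hypothesis your argument closes exactly as written: the trace inequality \eqref{eqn:TI1} gives $\epsilon\|\phi_\epsilon-\phi\|^2_{L^2(S\times\Gamma^*_\epsilon)}\le C\|\phi_\epsilon-\phi\|^2_{L^2(S\times\Omega)}+C\epsilon^2\big(\|\nabla\phi_\epsilon\|^2+\|\nabla\phi\|^2\big)\to 0$, the first term by the assumed strong convergence and the second by the explicit $\epsilon^2$ against the uniform gradient bound, and Lemma \ref{thm:PU2} converts this into $J_1\to0$; the density step for $J_2$ is handled the same way with $\phi^\eta\to\phi$ in $H^{1,2}$. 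This is also consistent with how the lemma is actually used in the paper, where $\mathcal{T}^b_\epsilon$ is applied either to quantities defined on $\Gamma^*_\epsilon$ itself ($w_\epsilon$, $z_\epsilon$) or to $H^{1,2}$ functions ($u_\epsilon$, $v_\epsilon$, via their traces), never to bare $L^2(S\times\Omega)$ data. So the correct conclusion is not that a cleverer $L^2$ argument is needed, but that the hypothesis of the lemma as transcribed is too weak and should be strengthened to an $H^{1,2}$ setting before your proof strategy becomes valid.
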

	% \begin{proof}
		%  See in Proposition 5.2 of \cite{cioranescu2006periodic} and \cite{cioranescu2006periodic1} for proof details.
		% \end{proof}
	% \subsection{Trace theorems}
	\subsection{Trace Theorems, Restriction Theorem and Extension Theorem}
	\begin{lemma}[Theorem 1 of Section 5.5 in \cite{evans1998partial}]\label{thm:T1}
		Let $1\le p<\infty$ and $\Omega\subset \mathbb{R}^n$ be a bounded domain with sufficiently smooth boundary $\partial\Omega$. Then there exists a bounded linear operator $T : H^{1,2}(\Omega)\rightarrow L^2(\partial \Omega)$ such that
		\begin{align*}
			&(i) Tu := u|_{\partial \Omega} \text{ if } u\in H^{1,2}(\Omega)\cap C(\bar{\Omega}), (ii) \|Tu\|_{L^2(\partial \Omega)}\le C\|u\|_{H^{1,2}(\Omega)},
		\end{align*}
		for each  $u\in H^{1,2}(\Omega)$, where $C$ depends on $p$ and $\Omega$ but independent of $u$.
	\end{lemma}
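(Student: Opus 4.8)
The plan is to follow the classical localization-and-density strategy: first establish the trace estimate (ii) for functions $u\in C^1(\bar\Omega)$, then extend the operator $T$ to all of $H^{1,2}(\Omega)$ by continuity, exploiting the density of smooth functions up to the boundary. Property (i) will then be read off from the construction.

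For the a priori estimate on smooth functions, I would first treat the case in which $\partial\Omega$ is flat near a boundary point. Fix $x^0\in\partial\Omega$ and suppose, after a rotation, that $\partial\Omega$ lies in the hyperplane $\{x_n=0\}$ inside a ball $B=B(x^0,r)$; write $\hat B=B\cap\{x_n>0\}$ for the half-ball lying in $\Omega$. Choose a cutoff $\zeta\in C_0^\infty(B)$ with $0\le\zeta\le 1$ and $\zeta\equiv 1$ on $B(x^0,r/2)$, and let $\Gamma_0=\partial\Omega\cap B(x^0,r/2)$. The key computation is the Gauss--Green identity applied to the vector field with last component $\zeta|u|^2$, whose only nonvanishing boundary contribution is on the flat bottom:
\begin{align*}
    \int_{\Gamma_0}|u|^2\,dx'\;\le\;\int_{\{x_n=0\}}\zeta|u|^2\,dx'\;=\;-\int_{\hat B}\partial_{x_n}\!\left(\zeta|u|^2\right)dx.
\end{align*}
Expanding $\partial_{x_n}(\zeta|u|^2)=(\partial_{x_n}\zeta)|u|^2+2\zeta u\,\partial_{x_n}u$ and applying Young's inequality $2|u||\partial_{x_n}u|\le|u|^2+|\partial_{x_n}u|^2$, the right-hand side is bounded by $C\int_{\hat B}\left(|u|^2+|Du|^2\right)dx$, which is the local trace estimate.

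To globalize, I would use the assumed smoothness of $\partial\Omega$: cover the compact boundary by finitely many balls $B(x^0_k,r_k/2)$, in each of which a $C^1$ change of coordinates straightens $\partial\Omega$, introduce a subordinate partition of unity, apply the flat estimate in each chart, and sum. Since the Jacobians of the straightening maps and their inverses are bounded, the transformed $H^{1,2}$ and $L^2$ norms are comparable, so the constant is controlled and one obtains $\|u\|_{L^2(\partial\Omega)}\le C\|u\|_{H^{1,2}(\Omega)}$ with $C=C(\Omega)$, for every $u\in C^1(\bar\Omega)$. Defining $Tu:=u|_{\partial\Omega}$ on this dense subclass, $T$ is linear and bounded.

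Finally I would pass to the limit by density. Since $\partial\Omega$ is smooth, $C^\infty(\bar\Omega)$ is dense in $H^{1,2}(\Omega)$; given $u\in H^{1,2}(\Omega)$ pick $u_m\in C^\infty(\bar\Omega)$ with $u_m\to u$ in $H^{1,2}(\Omega)$. The estimate shows that $\{u_m|_{\partial\Omega}\}$ is Cauchy in $L^2(\partial\Omega)$, so it converges to a limit which I define to be $Tu$; a standard argument shows this is independent of the approximating sequence, and passing to the limit preserves the bound (ii). For (i), if $u\in H^{1,2}(\Omega)\cap C(\bar\Omega)$ one may choose the $u_m$ converging uniformly on $\bar\Omega$, whence $u_m|_{\partial\Omega}\to u|_{\partial\Omega}$ uniformly and hence in $L^2(\partial\Omega)$, so $Tu=u|_{\partial\Omega}$. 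The main technical obstacle is the boundary-straightening step, namely verifying that the local flat estimate transfers through the $C^1$ diffeomorphisms with a controlled constant and assembling the pieces via the partition of unity; the density argument itself is routine.
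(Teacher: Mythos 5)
Your argument is correct and is essentially the same localization--flattening--Gauss--Green--density proof given in the cited source (Theorem 1, Section 5.5 of Evans), which is all the paper itself relies on: the lemma is quoted there without proof. Nothing is missing; the boundary-straightening and partition-of-unity step you flag as the main technical point is indeed the only place requiring care, and it goes through exactly as you describe since the straightening maps and their Jacobians are uniformly bounded on a finite cover of the compact boundary.
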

	% \begin{proof}
		%  The detailed proof can be found in Theorem 1 of Section 5.5 in \cite{evans1998partial}.
		% \end{proof}
	The sobolev space $H^\beta(\Omega)$ as a completion of $C^\infty(\Omega)$ is a Hilbert space equipped with a norm
	\begin{align*}
		\|\phi\|_{H^\beta(\Omega)}=\|\phi\|_{H^{[\beta]}(\Omega)}+\left(\int_{\Omega}\int_{\Omega}\frac{|\phi(x)-\phi(y)|^2}{|x-y|^{n+2(\beta-[\beta])}}dxdy\right)^{\frac{1}{2}}
	\end{align*}
	and the embedding $H^\beta(\Omega)\hookrightarrow L^2(\Omega)$ is continuous (cf. Theorem 7.57 of \cite{kufner1977function}).\\
	To estimate the boundary integral we frequently use the following trace inequality for $\epsilon$-dependent hypersurfaces $\Gamma_\epsilon^*$: For $\phi_\epsilon\in H^{1,2}(\Omega_\epsilon^p)$ there exists a constant $C$, which is independent of $\epsilon$ such that
	\begin{align}\label{eqn:TI1}
		\epsilon \int_{\Gamma^*_\epsilon}|\phi_\epsilon|^2d\sigma_x\le C\left[\int_{\Omega_\epsilon^p}|\phi_\epsilon|^2dx+\epsilon^2\int_{\Omega_\epsilon^p}|\nabla\phi_\epsilon|^2dx\right].
	\end{align}
	The proof of \eqref{eqn:TI1} is given in Lemma 3 of \cite{hornung1991diffusion} and in Lemma 2.7.2 of \cite{neuss1992homogenization}. Following the proof of the trace theorem of Theorem 6.3 of \cite{auchmuty2014sharp}, we will get a different form of the trace inequality:  For every $\phi_\epsilon\in H^{1,2}(\Omega_\epsilon^p)$ there exist positive constants $C^*$ and $\bar{C}$, independent of $\epsilon$ such that
	\begin{align}\label{eqn:TI2}
		\|\phi_\epsilon\|^2_{\Gamma^*_\epsilon}\le C^*\|\phi_\epsilon\|^2_{\Omega_\epsilon^p}+2\bar{C}\|\phi_\epsilon\|_{\Omega_\epsilon^p}\|\nabla \phi_\epsilon\|_{\Omega_\epsilon^p}.
	\end{align}
	For a function $\phi_\epsilon\in H^\beta(\Omega_\epsilon^p)$ with $\beta\in(\frac{1}{2},1)$, the inequality \eqref{eqn:TI1} refines into
	\begin{align}\label{eqn:TI3}
		\epsilon \int_{\Gamma^*_\epsilon}|\phi_\epsilon|^2d\sigma_x\le C_0\left[\int_{\Omega_\epsilon^p}|\phi_\epsilon|^2dx+\epsilon^{2\beta}\int_{\Omega_\epsilon^p}\int_{\Omega_\epsilon^p}\frac{|\phi_\epsilon(x)-\phi_\epsilon(y)|}{|x-y|^{n+2\beta}}dxdy\right],
	\end{align}
	where $C_0$ is a constant independent of $\epsilon$. See Lemma 4.2 of \cite{marciniak2008derivation} for the proof of the inequality \eqref{eqn:TI3}.
	\begin{lemma}[Restriction Theorem cf. \cite{allaire1992homogenization, neuss1992homogenization}]
		There exists a linear restriction operator $R_\epsilon : H^{1,2}_0(\Omega)^n \rightarrow H^{1,2}_0(\Omega^p_\epsilon)^n$ such that $R_\epsilon u(x)=u(x)|_{\Omega_\epsilon^p}$ for $u\in H^{1,2}_0(\Omega)^n$ and $\nabla. R_\epsilon u=0$ if $\nabla.u=0$. Furthermore, the restriction satisfies the following bound
		\begin{align}
			\|R_\epsilon u\|_{L^2(\Omega_\epsilon^p)}+\epsilon\|\nabla R_\epsilon u\|_{L^2(\Omega_\epsilon^p)}\le C\left(\|u\|_{L^2(\Omega)}+\epsilon \|\nabla u\|_{L^2(\Omega)}\right),
		\end{align}
		with an $\epsilon$ independent constant $C$.
		
	\end{lemma}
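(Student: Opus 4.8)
The plan is to reconstruct the classical Tartar--Allaire restriction operator: build a single linear operator on the reference cell $Y$ and then rescale it cell-by-cell to $\Omega_\epsilon^p$. First I would produce a bounded linear operator $R_1:H^{1,2}(Y)^n\to H^{1,2}(Y^p)^n$ enjoying four properties: $R_1u=u$ in a neighbourhood of $\partial Y$, $R_1u=0$ on $\Gamma$, $\nabla\cdot(R_1u)=0$ whenever $\nabla\cdot u=0$, and $\|R_1u\|_{H^{1,2}(Y^p)}\le C\|u\|_{H^{1,2}(Y)}$. Since $\bar Y^s\subset Y$, fix a smooth cut-off $\chi$ with $\chi\equiv 0$ near $\Gamma$ and $\chi\equiv 1$ near $\partial Y$, and let $Z:=\{0<\chi<1\}\subset\subset Y^p$ be the transition collar. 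The truncation $\chi u$ already vanishes on $\Gamma$ and coincides with $u$ near $\partial Y$, but its divergence acquires the defect $g:=\nabla\chi\cdot u$, supported in $Z$, since $\nabla\cdot(\chi u)=\chi\,\nabla\cdot u+\nabla\chi\cdot u$.

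The divergence defect is repaired by a Bogov\-skii-type correction. The crucial point, which I would verify first, is that $g$ has zero average over $Z$ in the divergence-free case: using that $g$ is supported in $Z\subset Y^p$ and the divergence theorem,
\begin{align*}
\int_{Z} g\,dy=\int_{Y^p}\nabla\chi\cdot u\,dy=\int_{\partial Y^p}\chi\,u\cdot\vec n\,d\sigma-\int_{Y^p}\chi\,\nabla\cdot u\,dy=\int_{\partial Y}u\cdot\vec n\,d\sigma=\int_{Y}\nabla\cdot u\,dy=0,
\end{align*}
because $\chi=0$ on $\Gamma$, $\chi=1$ on $\partial Y$, and $\nabla\cdot u=0$ on all of $Y$. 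Hence the divergence equation $\nabla\cdot w=\langle g\rangle_Z-g$ on $Z$ admits a solution $w=B_Z(\langle g\rangle_Z-g)\in H^{1,2}_0(Z)^n$ with $\|w\|_{H^{1,2}_0(Z)}\le C\|g\|_{L^2(Z)}\le C\|u\|_{H^{1,2}(Y)}$, where $\langle g\rangle_Z$ denotes the mean and $B_Z$ is the (linear, bounded) Bogovskii/Nečas solution operator. Extending $w$ by zero and setting $R_1u:=\chi u+w$ gives a linear operator with $\nabla\cdot(R_1u)=\chi\,\nabla\cdot u+\langle g\rangle_Z$; for divergence-free $u$ the mean vanishes, so $\nabla\cdot(R_1u)=0$, while the boundary and regularity properties are inherited from $\chi u$ and $w$.

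I would then define $R_\epsilon$ cell-wise: for each admissible cell $\varepsilon Y_k\subset\Omega$ and $u\in H^{1,2}_0(\Omega)^n$, pull back $u_k(y):=u(\varepsilon(y+k))$, apply $R_1$, and push forward by $(R_\epsilon u)(x):=(R_1u_k)(x/\varepsilon-k)$ on $\varepsilon Y_k^p$. The chain rule $\nabla_x=\varepsilon^{-1}\nabla_y$ together with the change of variables converts the reference-cell bound into
\begin{align*}
\|R_\epsilon u\|_{L^2(\varepsilon Y_k^p)}+\varepsilon\|\nabla R_\epsilon u\|_{L^2(\varepsilon Y_k^p)}\le C\big(\|u\|_{L^2(\varepsilon Y_k)}+\varepsilon\|\nabla u\|_{L^2(\varepsilon Y_k)}\big),
\end{align*}
with $C$ the (dimensionless) constant of $R_1$, independent of $\varepsilon$ and $k$; squaring and summing over the disjoint cells yields the asserted global estimate. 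Because $R_1u_k=u_k$ near $\partial Y$, the cell pieces match $u$ on every shared interface $\varepsilon\partial Y_k$, so the assembled field lies in $H^{1,2}$ of $\Omega_\epsilon^p$, vanishes on $\Gamma_\epsilon^*$, and vanishes near $\partial\Omega$ (as $u\in H^{1,2}_0(\Omega)^n$), giving $R_\epsilon u\in H^{1,2}_0(\Omega_\epsilon^p)^n$; divergence preservation and linearity are cell-wise, hence global. The main obstacle is precisely the divergence-preservation step: it hinges on the mean-zero identity above (which forces the correct boundary behaviour of $\chi$) and on the boundedness of the Bogovskii operator on the \emph{fixed} domain $Z$, so that the scaling argument transfers it to an $\varepsilon$-\emph{uniform} constant rather than one degenerating as $\varepsilon\to0$.
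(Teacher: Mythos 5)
The paper does not prove this lemma at all: it is quoted verbatim from the literature (Allaire; Neuss-Radu), so there is no in-paper argument to compare against. Your reconstruction is a legitimate and essentially complete proof of the quantitative content of the statement, but it follows a different route from the classical construction the citation points to. Tartar/Allaire build the cell operator $R_1$ by keeping $u$ unchanged outside an annulus $D\setminus\overline{Y^s}$ around the inclusion and solving an auxiliary Stokes problem in that annulus with data $v=u$ on $\partial D$, $v=0$ on $\Gamma$, and prescribed divergence $\nabla\cdot u+\frac{1}{|D\setminus Y^s|}\int_{Y^s}\nabla\cdot u\,dy$; the compatibility condition is the same flux identity you verify. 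You replace the Stokes solve by a cut-off plus a Bogovskii correction of the divergence defect $\nabla\chi\cdot u$. Your mean-zero computation, the boundedness $\|w\|_{H^{1,2}_0(Z)}\le C\|u\|_{H^{1,2}(Y)}$, the cell-by-cell rescaling with $\nabla_x=\epsilon^{-1}\nabla_y$, and the matching of traces across $\epsilon\partial Y_k$ are all correct, and they do deliver linearity, the range $H^{1,2}_0(\Omega^p_\epsilon)^n$, divergence preservation, and the $\epsilon$-uniform bound exactly as stated. (One small point you should make explicit: $Z$ must be chosen as a connected Lipschitz collar around $Y^s$ — or, if $Y^s$ has several components, the flux identity must be localized to each component of $Z$ — for the Bogovskii solvability and the vanishing of the componentwise means.)

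The one property your operator does \emph{not} have is the locality clause. The lemma's phrase ``$R_\epsilon u(x)=u(x)|_{\Omega_\epsilon^p}$ for $u\in H^{1,2}_0(\Omega)^n$'' is a misstatement (no operator into $H^{1,2}_0(\Omega^p_\epsilon)^n$ can literally restrict every $u$), but its intended meaning is the standard one: $R_\epsilon u=u|_{\Omega_\epsilon^p}$ whenever $u$ already vanishes on the solid part $\Omega^s_\epsilon$. Tartar's construction recovers this by uniqueness for the auxiliary Stokes problem; your $R_1u=\chi u+w$ does not, since $\chi u\ne u$ on the collar even when $u\equiv 0$ on $Y^s$. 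If the locality property is needed downstream (it is the ingredient that lets one identify the extended pressure with $P_\epsilon$ on $\Omega^p_\epsilon$), you would either have to switch to the Stokes-problem construction or add an argument showing your corrector vanishes for such $u$, which it does not. For the bound and the divergence preservation alone, your proof stands.
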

	% \subsection{Extension Operator}
	\begin{lemma}[Extension Operator]\label{lemma:EL1}
		\begin{enumerate}
			\item For $v_{\epsilon\delta}, u_{\epsilon\delta}\in H^{1,2}(Y^p)$ there exists an extension $\tilde{v}_{\epsilon\delta}, \tilde{u}_{\epsilon\delta}$ to $Y$ such that 
			\begin{enumerate}
				\item $\|\tilde{v}_{\epsilon\delta}\|_Y\le C \|v_{\epsilon\delta}\|_{Y^p}$ and $\|\nabla\tilde{v}_{\epsilon\delta}\|_Y\le C \|\nabla v_{\epsilon\delta}\|_{Y^p}$,
				\item $\|\tilde{u}_{\epsilon\delta}\|_Y\le C \|u_{\epsilon\delta}\|_{Y^p}$ and $\|\nabla\tilde{u}_{\epsilon\delta}\|_Y\le C \|\nabla u_{\epsilon\delta}\|_{Y^p}$.
			\end{enumerate}    
			\item For $v_{\epsilon\delta}, u_{\epsilon\delta}\in H^{1,2}(\Omega_\epsilon^p)$ there exists an extension $\tilde{v}_{\epsilon\delta}, \tilde{u}_{\epsilon\delta}$ to $\Omega$ such that
			\begin{enumerate}
				\item $\|\tilde{v}_{\epsilon\delta}\|_{H^{1,2}(\Omega)}\le C \|v_{\epsilon\delta}\|_{H^{1,2}(\Omega^p_\epsilon)},$     
				\item $\|\tilde{u}_{\epsilon\delta}\|_{H^{1,2}(\Omega)}\le C \|u_{\epsilon\delta}\|_{H^{1,2}(\Omega^p_\epsilon)}.$
			\end{enumerate}
		\end{enumerate}
	\end{lemma}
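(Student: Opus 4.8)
The plan is to reduce everything to a single extension on the reference cell $Y$ and then rescale and patch cell by cell. For Part~1 I would invoke the classical Sobolev extension theorem on $Y$: since $\bar Y^s\subset Y$ and $Y^p$ is a Lipschitz domain, there is a bounded linear operator $E\colon H^{1,2}(Y^p)\to H^{1,2}(Y)$ (of Stein or Calder\'on type) with $Ev=v$ on $Y^p$ and $\|Ev\|_{H^{1,2}(Y)}\le C\|v\|_{H^{1,2}(Y^p)}$, where $C$ depends only on the geometry of $Y^p$. Because such operators are simultaneously bounded on $L^2$, the first inequality in 1(a)--1(b), namely $\|\tilde v_{\epsilon\delta}\|_{Y}\le C\|v_{\epsilon\delta}\|_{Y^p}$, is immediate.

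The only genuinely separate point in Part~1 is the \emph{pure} gradient bound $\|\nabla\tilde v_{\epsilon\delta}\|_{Y}\le C\|\nabla v_{\epsilon\delta}\|_{Y^p}$, in which no $L^2$ term appears on the right. To obtain it I would modify $E$ so that it reproduces constants: writing $\bar v=\frac{1}{|Y^p|}\int_{Y^p}v\,dy$ and setting $Pv:=E(v-\bar v)+\bar v$, the operator $P$ is linear, equals $v$ on $Y^p$, satisfies $Pc=c$ for constants, and stays $L^2\to L^2$ bounded. Since $\nabla Pv=\nabla E(v-\bar v)$, I would estimate
\[
\|\nabla Pv\|_{Y}\le\|E(v-\bar v)\|_{H^{1,2}(Y)}\le C\|v-\bar v\|_{H^{1,2}(Y^p)}\le C'\|\nabla v\|_{Y^p},
\]
where the last step is the Poincar\'e--Wirtinger inequality on the connected set $Y^p$. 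Taking $\tilde v_{\epsilon\delta}=Pv_{\epsilon\delta}$ (and analogously for $u_{\epsilon\delta}$) settles Part~1.

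For Part~2 I would transport $P$ to every scaled cell. On each $\varepsilon Y_k$ with $\varepsilon Y_k\subset\Omega$, I define $\tilde v_{\epsilon\delta}$ by the rescaled extension $P_\epsilon$ obtained from $P$ through the change of variables $y=x/\varepsilon$. Under this scaling the cell $L^2$ and gradient inequalities reproduce themselves cell by cell with the \emph{same} constant $C$, because the factors $\varepsilon^{n}$ and $\varepsilon^{n-2}$ appearing in $\int_{\varepsilon Y^p}|\cdot|^2$ and $\int_{\varepsilon Y^p}|\nabla\cdot|^2$ cancel on the two sides of each inequality. The geometric fact that lets the cell-wise pieces assemble into a global $H^{1,2}(\Omega)$ function is the assumption $\bar Y^s\subset Y$: the extension only alters values inside the interior hole $\varepsilon Y_k^s$ and leaves the data untouched in the pore collar near $\partial(\varepsilon Y_k)$, so across every interior interface the pieces coincide with the single $H^{1,2}$ function $v_{\epsilon\delta}$ on $\Omega_\epsilon^p$ and no spurious jumps are created. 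Summing the cell estimates over $k$ then yields
\[
\|\tilde v_{\epsilon\delta}\|^2_{H^{1,2}(\Omega)}=\|\tilde v_{\epsilon\delta}\|^2_{\Omega}+\|\nabla\tilde v_{\epsilon\delta}\|^2_{\Omega}\le C^2\big(\|v_{\epsilon\delta}\|^2_{\Omega_\epsilon^p}+\|\nabla v_{\epsilon\delta}\|^2_{\Omega_\epsilon^p}\big)=C^2\|v_{\epsilon\delta}\|^2_{H^{1,2}(\Omega_\epsilon^p)},
\]
which is 2(a)--2(b).

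I expect the $\varepsilon$-independence of $C$ in Part~2 to be the main obstacle, and it is precisely here that the strict interior containment $\bar Y^s\subset Y$ together with the scale invariance of the gradient estimate do the work. A secondary technical point is the treatment of cells meeting the outer boundary $\partial\Omega$; under the covering convention of Section~2 ($\Omega\subset\bigcup_k\varepsilon Y_k$ with whole interior cells), these boundary layers can be handled by the same local operator or absorbed into $C$, exactly as in the classical perforated-domain extension results (Cioranescu--Saint Jean Paulin; Acerbi--Chiad\`o Piat--Dal Maso--Percivale).
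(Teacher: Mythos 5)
Your argument is correct and follows the same overall architecture as the paper's: construct an extension on the reference cell that satisfies a gradient-only estimate, then transfer it to each $\varepsilon$-cell by scaling, using $\bar{Y}^{s}\subset Y$ so that the extension only modifies values inside the holes and the cell-wise pieces glue into a single $H^{1,2}(\Omega)$ function. Where you genuinely diverge is in how the reference-cell extension is built. The paper (deferring to Hornung--J\"ager and Neuss rather than reproving the lemma) constructs an explicit reflection operator across the interface $\Gamma$, namely $\bar{v}(z,y_n)=3v(z,-\tfrac{y_n}{3})-2v(z,-y_n)$ for $y_n\le 0$ in boundary-fitted coordinates; since this formula maps constants to constants, the pure gradient bound $\|\nabla\tilde{v}\|_{Y}\le C\|\nabla v\|_{Y^p}$ falls out of the construction directly, with no Poincar\'e inequality needed. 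You instead start from an abstract Stein/Calder\'on extension $E$ and repair it by mean subtraction, $Pv=E(v-\bar{v})+\bar{v}$, recovering constant-preservation and hence the gradient-only estimate via Poincar\'e--Wirtinger on the connected pore part $Y^p$. The trade-off is mild: the reflection route is fully constructive but requires flattening $\Gamma$ and some interface regularity, while your route is shorter and works for any Lipschitz $Y^p$ at the cost of invoking a nonconstructive extension theorem and the connectedness of $Y^p$ (which holds here since $\bar{Y}^{s}\subset Y$ and $n\ge 2$). Both then conclude by the identical scaling cancellation of $\varepsilon^{n}$ and $\varepsilon^{n-2}$, giving an $\varepsilon$-independent constant.
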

	\begin{rem}
		The proof of the above lemma can be found in \cite{hornung1991diffusion,neuss1992homogenization}, where a reflection operator is constructed on the interface of the solid matrix $Y^s$ and then the rest is done via a simple scaling argument. Although, the choice of the reflection operator is not unique, we may also construct it as
		\begin{gather*}
			\bar{v}_{\epsilon\delta} \colon S\times\Gamma\times(-\delta, \delta) \rightarrow S\times U \text{ (nbd of $\Gamma=\partial Y^s$) } \text{ by } \\
			\bar{v}_{\epsilon\delta}(t,z,y_n)=\begin{cases}
				v_{\epsilon\delta}(t,z,y_n) \; \text{ if } y_n\geq0,\\
				3v_{\epsilon\delta}(t,z,-\frac{y_n}{3})-2v_{\epsilon\delta}(t,z,-y_n) \; \text{ if } y_n\leq0.
			\end{cases}
			\text{  where  } z=(y_1,y_2,\cdots,y_{n-1})
		\end{gather*} 
	\end{rem}
	\subsection{Proof of the Lemma \ref{lemma: 1}}
	The equation \eqref{eqn:R2} for $i=1$ and $\theta=\frac{v_1-v_0}{h}$ takes the form
	\begin{align*}
		\left\lVert\frac{v_1-v_0}{h}\right\rVert^2_{\Omega_\epsilon^p}+b(v_1,\frac{v_1-v_0}{h})=\langle f(v_0), \frac{v_1-v_0}{h}\rangle_{\Gamma^*_\epsilon}-\langle g_0, \frac{v_1-v_0}{h}\rangle_{\partial \Omega_{in}}-\langle h_0, \frac{v_1-v_0}{h}\rangle_{\partial \Omega_{out}},
	\end{align*}
	which can be estimated as
	\begin{align*}
		\left\lVert\frac{v_1-v_0}{h}\right\rVert^2_{\Omega_\epsilon^p}+\frac{\alpha}{h}\|\nabla (v_1-v_0)\|^2_{\Omega_\epsilon^p}&\leq \langle f(v_0), \frac{v_1-v_0}{h}\rangle_{\Gamma^*_\epsilon}-\langle g_0, \frac{v_1-v_0}{h}\rangle_{\partial \Omega_{in}}\\
		&-\langle h_0, \frac{v_1-v_0}{h}\rangle_{\partial \Omega_{out}}-b(v_0,\frac{v_1-v_0}{h}).
	\end{align*}
	The terms on the r.h.s. can be calculated as follows:
	\begin{align*}
		&\langle f(v_0), \frac{v_1-v_0}{h}\rangle_{\Gamma^*_\epsilon}\leq C_6+C\gamma_1\left\lVert\frac{v_1-v_0}{h}\right\rVert^2_{\Omega_\epsilon^p}+\frac{C\gamma_1}{h^2}\|\nabla(v_1-v_0)\|^2_{\Omega_\epsilon^p},\\
		&\langle g_0, \frac{v_1-v_0}{h}\rangle_{\partial \Omega_{in}}+\langle h_0, \frac{v_1-v_0}{h}\rangle_{\partial \Omega_{out}}\le\frac{1}{4\gamma_2}\left(\|g_0\|^2_{\partial\Omega_{in}}+\|h_0\|^2_{\partial\Omega_{out}}\right)+\gamma_2\left\lVert\frac{v_1-v_0}{h}\right\rVert^2_{\partial\Omega},\\
		&b(v_0,\frac{v_1-v_0}{h})\le\frac{M^2}{4\gamma_3}\|\nabla v_0\|^2_{\Omega_\epsilon^p}+\frac{\gamma_3}{h^2}\|\nabla(v_1-v_0)\|^2_{\Omega_\epsilon^p}+\frac{1}{4\gamma_4}\|\textbf{q}_0v_0\|_{\Omega_\epsilon^p}+\frac{\gamma_4}{h^2}\|\nabla(v_1-v_0)\|^2_{\Omega_\epsilon^p},
	\end{align*}
	where $\gamma_1, \gamma_2, \gamma_3, \gamma_4$ are the \textit{Young's} inequality constants, $C$ is the trace constant and $C_6=\frac{k_d^2}{4\gamma_1}(1+\frac{k}{4})^2\frac{|\Omega||\Gamma|}{|Y|}$. Putting it all together, we arrive at
	\begin{align*}
		(1-C\gamma_1-C\gamma_2)&\left\lVert\frac{v_1-v_0}{h}\right\rVert^2_{\Omega_\epsilon^p}+\frac{1}{h^2}\left(\alpha h-C\gamma_1-C\gamma_2-\gamma_3-\gamma_4\right)\|\nabla (v_1-v_0)\|^2_{\Omega_\epsilon^p}\\
		&\le C_6+\frac{1}{4\gamma_2}\left(\|g_0\|^2_{\partial\Omega_{in}}+\|h_0\|^2_{\partial\Omega_{out}}\right)+\frac{M^2}{4\gamma_3}\|\nabla v_0\|^2_{\Omega_\epsilon^p}+\frac{1}{4\gamma_4}\|\textbf{q}_0v_0\|_{\Omega_\epsilon^p}=C_7.
	\end{align*}
	We now use \eqref{eqn:qu} and choose $h$ small enough such that $\alpha h<1$. Then for $\gamma_1=\frac{\alpha h}{4C}=\gamma_2$ and $\gamma_3=\frac{\alpha h}{8}=\gamma_4$ we obtain
	\begin{align*}
		\left\lVert\frac{v_1-v_0}{h}\right\rVert^2_{\Omega_\epsilon^p}+\frac{1}{2h^2}\|\nabla (v_1-v_0)\|^2_{\Omega_\epsilon^p}\le C.
	\end{align*}
	For $i\geq2$, we subtract \eqref{eqn:R2} for $i=j$ from \eqref{eqn:R2} for $i=j-1$. After that, we use \eqref{eqn:v} and test the difference with $\theta=\frac{v_j-v_{j-1}}{h}$ to get
	\begin{align}
		\left\lVert\frac{v_j-v_{j-1}}{h}\right\rVert^2_{\Omega_\epsilon^p}+\frac{\alpha}{h}\|\nabla (v_j-v_{j-1})\|^2_{\Omega_\epsilon^p}
		&\leq\langle f(v_{j-1})-f(v_{j-2}),\frac{v_j-v_{j-1}}{h}\rangle_{\Gamma^*_\epsilon}-\langle g_{j-1}-g_{j-2},\frac{v_j-v_{j-1}}{h}\rangle_{\partial\Omega_{in}}\notag\\
		&-\langle h_{j-1}-h_{j-2},\frac{v_j-v_{j-1}}{h}\rangle_{\partial\Omega_{out}}+
		\langle \frac{v_{j-1}-v_{j-2}}{h}, \frac{v_j-v_{j-1}}{h}\rangle_{\Omega_\epsilon^p}. \label{eqn:R1}
	\end{align}
	Therefore, we can simplify \eqref{eqn:R1} as
	\begin{align*}
		(1-C\gamma_1-C\gamma_2-\gamma_3)\left\lVert\frac{v_j-v_{j-1}}{h}\right\rVert^2_{\Omega_\epsilon^p}&+\frac{1}{h^2}(\alpha h-C\gamma_1-C\gamma_2)\|\nabla (v_j-v_{j-1})\|^2_{\Omega_\epsilon^p}\le C_{8}+\frac{1}{4\gamma_3}\left\lVert\frac{v_{j-1}-v_{j-2}}{h}\right\rVert^2_{\Omega_\epsilon^p}.
	\end{align*}
	Again, for $\alpha h<1$ we derive
	\begin{align*}
		\left\lVert\frac{v_j-v_{j-1}}{h}\right\rVert^2_{\Omega_\epsilon^p}+\frac{2}{h^2}\|\nabla (v_j-v_{j-1})\|^2_{\Omega_\epsilon^p}\le C_9 + C_{10} \left\lVert\frac{v_{j-1}-v_{j-2}}{h}\right\rVert^2_{\Omega_\epsilon^p}.
	\end{align*}
	If we take $\sigma_j=\left\lVert\frac{v_j-v_{j-1}}{h}\right\rVert^2_{\Omega_\epsilon^p}+\frac{2}{h^2}\|\nabla (v_j-v_{j-1})\|^2_{\Omega_\epsilon^p}$ then, we have $\sigma_j\leq C_9+C_{10}\sigma_{j-1}\Rightarrow \sigma_j\leq C$ by Gronwall's inequality.
	%  \begin{align*}
		%  	\sigma_j\leq \frac{C_{9}}{\bar{C}}+\frac{1}{2\bar{C}}\sigma_{j-1}.
		%  \end{align*}
	% Gronwall's inequality gives
	% \begin{align*}
		% 	\sigma_j\leq C.
		% \end{align*}
\end{document}